\newtheorem{theorem}{Theorem}[section]
\newtheorem*{theorem*}{Theorem}
\newtheorem{corollary}[theorem]{Corollary}
\newtheorem{lemma}[theorem]{Lemma}
\newtheorem*{lemma*}{Lemma}
\newtheorem{proposition}[theorem]{Proposition}
\theoremstyle{definition}
\newtheorem{definition}[theorem]{Definition} 
\newtheorem{example}[theorem]{Example}
\theoremstyle{remark}
\newtheorem{remark}[theorem]{Remark}
\def\@marginparreset{%
  \reset@font
  \tiny
  \@marginparfont
  \@setminipage
}
\def\@marginparfont{\tiny} 
\def\C{\mathbb{C}}
\def\R{\mathbb{R}}
\def\Q{\mathbb{Q}}
\def\Z{\mathbb{Z}}
\def\Proj{\mathbb{P}}
\def\Hh{\mathcal{H}}
\def\Mm{\mathcal{M}}
\def\Pp{\mathcal{P}}
\def\cD{\mathscr{D}}
\def\cF{\mathscr{F}}
\def\map{\longrightarrow}
\def\norm{\vartriangleleft}
\def\rk{\mathrm{rk}}
\def\inj{\hookrightarrow}
\def\surj{\twoheadrightarrow}
\def\longinj{\longhookrightarrow}
\newcommand{\HomLin}[3]{\mathrm{Hom}_{#1}\left(#2,#3\right)}
\def\id{\mathrm{id}}
\def\Im{\mathrm{Im}}
\def\ker{\mathrm{ker}}
\DeclareMathOperator{\lcm}{lcm}
\DeclareMathOperator{\disc}{disc}
\def\tq{\mid}
\newcommand{\ord}[2]{\mathrm{ord}_{#2}\left(#1\right)}
\def\Aut{\mathrm{Aut}}
\def\div{\mathrm{div}}
\def\cover{\rho}
\title[Groups acting on moduli spaces of hyper-Kähler manifolds]{Groups acting on moduli spaces of hyper-Kähler manifolds}
\author{Francesca Rizzo}
\address{Université Paris Cité, IMJ - PRG, 75013 Paris, France}
\email{francesca.rizzo@imj-prg.fr}
\begin{document}
\maketitle
\begin{abstract}
\vspace*{-3em}
   The period morphism of polarized hyper-Kähler manifolds of K3$^{[m]}$-type gives an embedding of each connected component of the moduli space of polarized hyper-Kähler manifolds of K3$^{[m]}$-type into their period space, which is the quotient of a Hermitian symmetric domain by an arithmetic group. Following work of Stellari and Gritsenko-Hulek-Sankaran, we study the ramification of covering maps between these period spaces that arise from the action of some groups of isometries.
\end{abstract}
\section*{Introduction}
In \cite{stellari2008}, Stellari studied the action of a group of symmetries on the moduli space of polarized K3 surfaces. More precisely, let $h$ be a primitive vector of square $2d$ in the unimodular lattice $\Lambda_{K3}$ associated to the second integral cohomology group of a K3 surface. The period morphism of polarized K3 surfaces of degree $2d$ is an open embedding
$$
\wp: M_{2d} \longinj \cF_{2d}=\cD_{h^\perp}/O(\Lambda_{K3}, h)
$$
of the moduli space $M_{2d}$ of polarized K3 surfaces into the period space $\cF_{2d}$, which is the quotient of the period domain $\cD_{h^\perp}$ associated to the lattice $h^\perp$ (an open analytic subset of a quadric) by the arithmetic group of isometries of the lattice $\Lambda_{K3}$ that fix the vector $h$. The group $O(\Lambda_{K3}, h)$ is a normal subgroup of the group of isometries $O(h^\perp)$ of $h^\perp$, and there is a natural action of the group $G = O(h^\perp)/O(\Lambda_{K3}, h)$ on the period space $\cF_{2d}$. Therefore, the group $G$ (which is an abelian group of exponent $2$) induces a Galois cover
$$
\cover: \cF_{2d} \map \cF_{2d}/G
$$
and acts birationally on the moduli space $M_{2d}$.\\

Stellari characterized the divisorial components of the ramification of the cover $\cover$. The aim of this paper is to generalize Stellari's result to some moduli spaces of polarized hyper-Kähler manifolds.\\

We consider the following more general situation. Fixing an even lattice $\Lambda$ of signature $(2, n_-)$, with $n_-\ge 2$, and a subgroup $\Gamma <O(\Lambda)$ of finite index, we  consider the period space
$\cD_{\Lambda}/\Gamma$, where $\cD_{\Lambda}$ is a Hermitian symmetric domain with a natural action of the group $O(\Lambda)$ of isometries of the lattice $\Lambda$. The period spaces $\cD_\Lambda/\Gamma$ are  normal quasi-projective varieties {\cite[Theorem~6.1.13]{K3Huy}}.\\

When $\Gamma$ is a normal subgroup of some subgroup $O$ of $O(\Lambda)$, we obtain a Galois cover
$$
    \cover: \cD_\Lambda/\Gamma\map \cD_\Lambda/O.
$$
Our aim is to study the divisorial components of the ramification of $\cover$.\\

A \emph{Heegner divisor} is the image in $\cD_\Lambda/\Gamma$ of the hypersurface $\Proj(\beta^\perp)\cap \cD_{\Lambda}$, for some negative square vector $\beta\in \Lambda$. We denote this divisor by $\Hh_{\beta^\perp}$.

We say that a nonzero vector $\beta\in \Lambda$ defines a reflection if there exists an isometry $r_\beta\in O$ which is the identity on $\beta^\perp$ and acts as $-\id$ on $\Z\beta$. If $\beta$ is a vector of negative square that defines a reflection, the Heegner divisor $\Hh_{\beta^\perp}$ is fixed by $[r_\beta] \in O/\Gamma$.\\

Following \cite{GritsenkoRefl2007}, in Theorem~\ref{thm:divisori_ref}, we characterize the divisorial components of the ramification of $\cover$ for even lattices $\Lambda$ of signature $(2, n_-)$, with $n_-\ge 2$, and all normal subgroups $\Gamma\norm O$ of finite index such that $\cD_{\Lambda}/\Gamma$ is irreducible. In particular we prove that the divisorial components of the ramification of $\cover$ are the Heegner divisors $\Hh_{\beta^\perp}$, for primitive vectors $\beta\in \Lambda$ of negative square that define nontrivial classes $[\pm r_\beta]$ in the Galois group $O/\Gamma$.\\

We apply these results to the following geometric situation. A hyper-Kähler manifold of K$3^{[m]}$-type is a smooth deformation of the $m$-th Hilbert scheme of points of a K3 surface.
Given a hyper-Kähler manifold $X$ of K3$^{[m]}$-type, the abelian group $H^2(X,\Z)$ is free of rank 23 and it is equipped with the Beauville–Bogomolov–Fujiki form $q_X$, a non-degenerate $\Z$-valued quadratic form of signature $(3, 20)$. The group $H^2(X,\Z)$ with the quadratic form $q_X$ is an even lattice isomorphic to
\begin{equation}\label{eqn:lattice_K3[n]}
\Lambda_{K3^{[m]}}=\Lambda_{K3} \oplus \Z \ell,
\end{equation}
where $\ell$ is a vector of square $q(\ell)=-2(m-1)$.\\

A polarization on $X$ is the class $H$ of an ample line bundle on $X$ that is primitive in the lattice $H^2(X,\Z) = \Lambda_{K3^{[m]}}$. The polarization type of $(X,H)$ is the $O(\Lambda_{K3^{[m]}})$-orbit of the class $H$.
Smooth polarized hyper-Kähler manifolds $(X, H)$ of K$3^{[m]}$-type of polarization type $\tau = O(\Lambda_{K3^{[m]}})h$ admit a, possibly reducible, quasi-projective coarse moduli space ${^{{[m]}}M_{\tau}}$. The period morphism of polarized hyper-Kähler manifolds of polarization type $\tau$ is the morphism
$$
\wp: {^{{[m]}}M_{\tau}} \map \cD_{h^\perp}/\widehat O(\Lambda_{K3^{[m]}}, h),
$$
where $\widehat O(\Lambda_{K3^{[m]}}, h)$ is the group of isometries of $\Lambda_{K3^{[m]}}$ that fix the vector $h$ and act as $\pm \id$ on the discriminant group of $\Lambda_{K3^{[m]}}$. The period morphism $\wp$ is an open embedding on each connected component of ${^{{[m]}}M_{\tau}}$.\\

In Section~\ref{sec:isometries_lattices_nomality} we study the normality of the subgroup $\widehat O(\Lambda_{K3^{[m]}}, h)$ of $O(h^\perp)$. When the subgroup $\widehat O(\Lambda_{K3^{[m]}},h)$ is normal, the group $G = O(h^\perp)/\widehat O(\Lambda_{K3^{[m]}},h)$ acts on the period space $\cD_{h^\perp}/\widehat O(\Lambda_{K3^{[m]}}, h)$, hence also, birationally, on the moduli space ${^{{[m]}}M_{\tau}}$. We can apply Theorem~\ref{thm:divisori_ref} to characterize the divisorial ramification components of the Galois cover
$$
\cover: \cD_{h^\perp}/\widehat O(\Lambda_{K3^{[m]}}, h) \map \cD_{h^\perp}/O(h^\perp).
$$

They are Heegner divisors associated to vectors $\beta$ such that $[\pm r_\beta]$ are nontrivial elemets of the Galois group $G$. In Theorem~\ref{thm:HK_beta_reflection_numerical} we give, in our situation, a numerical characterization of these vectors when the polarization type has divisibility $1$.\\ 

In dimension 4, the polarization type only depends on the square $h^2 \coloneqq q(h)$ and the divisibility of $h$, the positive generator of the ideal $h\cdot \Lambda_{K3^{[2]}}\subset \Z$. In that case, our result translates into the following theorem.
\begin{theorem} Let $h$ be a polarization of square $2d$ and divisibility $1$. The divisorial components of the ramification of the Galois cover
$$
\cover: \cD_{h^\perp}/\widehat O(\Lambda_{K3^{[2]}}, h) \map \cD_{h^\perp}/O(h^\perp)
$$
are the Heegner divisors $\Hh_{\beta^\perp}$ such that $\beta$ is primitive and satisfies the conditions
\begin{samepage}
\begin{enumerate}[label=\rm{\alph*})]
        \item {$\beta^2\mid 2\div(\beta)$;}
        \item {$\beta^2\neq -2$ and if $\beta^2 = -2d$, then $2d\nmid \beta\cdot \ell$.}
    \end{enumerate}
\end{samepage}
\end{theorem}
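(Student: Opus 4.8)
The plan is to deduce the statement from the two general results quoted above, specialized to $m=2$. First I would record the arithmetic simplifications particular to dimension four: here $\ell^2=-2$, so $\Lambda_{K3^{[2]}}$ has discriminant group $A_{\Lambda_{K3^{[2]}}}\cong\Z/2\Z$, on which every isometry acts as $\pm\id$ (in fact trivially). Consequently $\widehat O(\Lambda_{K3^{[2]}},h)=O(\Lambda_{K3^{[2]}},h)$, and, using the normality verified in Section~\ref{sec:isometries_lattices_nomality}, the Galois group of $\cover$ is $G=O(h^\perp)/O(\Lambda_{K3^{[2]}},h)$. I would then apply Theorem~\ref{thm:divisori_ref} with the ambient lattice taken to be $h^\perp$, the big group $O(h^\perp)$, and $\Gamma=O(\Lambda_{K3^{[2]}},h)$: the divisorial components of the ramification are exactly the Heegner divisors $\Hh_{\beta^\perp}$ for primitive $\beta\in h^\perp$ with $\beta^2<0$ whose reflection gives a nontrivial class $[\pm r_\beta]$ in $G$. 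It then remains to rewrite ``$r_\beta\in O(h^\perp)$ exists and $[\pm r_\beta]\neq 1$'' as the numerical conditions a) and b); this is the $m=2$ instance of the criterion in Theorem~\ref{thm:HK_beta_reflection_numerical}.

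For condition a): the reflection $r_\beta(x)=x-\tfrac{2(x\cdot\beta)}{\beta^2}\beta$ is an integral isometry of $h^\perp$ if and only if $\beta^2\mid 2(x\cdot\beta)$ for every $x\in h^\perp$, that is $\beta^2\mid 2\div(\beta)$, where $\div(\beta)$ denotes the divisibility of $\beta$ in $h^\perp$. So a) is precisely the existence of $r_\beta$ in $O(h^\perp)$, and under a) the class $[\pm r_\beta]$ is nontrivial in $G$ exactly when $r_\beta\notin O(\Lambda_{K3^{[2]}},h)$ and $-r_\beta\notin O(\Lambda_{K3^{[2]}},h)$. These two requirements will produce, respectively, the two clauses of b).

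For the first clause I would observe that $r_\beta$ lies in $O(\Lambda_{K3^{[2]}},h)$ if and only if the reflection $R_\beta$ of the whole lattice $\Lambda_{K3^{[2]}}$ is integral --- note that $R_\beta$ automatically fixes $h$ since $\beta\in h^\perp$, and restricts to $r_\beta$ on $h^\perp$ --- which happens if and only if $\beta^2\mid 2\div_{\Lambda_{K3^{[2]}}}(\beta)$. Because $A_{\Lambda_{K3^{[2]}}}\cong\Z/2\Z$, the divisibility $\div_{\Lambda_{K3^{[2]}}}(\beta)$ equals $1$ or $2$; moreover when it equals $2$ one has $\beta\equiv\ell\pmod{2\Lambda_{K3^{[2]}}}$, whence $\beta^2\equiv\ell^2=-2\pmod 4$. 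A short case check then gives $\beta^2\mid 2\div_{\Lambda_{K3^{[2]}}}(\beta)\iff\beta^2=-2$, so that $[r_\beta]\neq 1$ is equivalent to $\beta^2\neq-2$.

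The second clause is the delicate point, and I expect it to be the main obstacle. Here $-r_\beta\in O(h^\perp)$ extends, fixing $h$, to the isometry $\sigma$ that is the identity on the saturation of $\langle h,\beta\rangle$ and $-\id$ on its orthogonal complement in $\Lambda_{K3^{[2]}}$; thus $[-r_\beta]=1$ if and only if $\sigma$ is integral, which by Nikulin's gluing criterion amounts to the glue group between these two sublattices being $2$-torsion. Carrying out this discriminant-form computation for the rank-two lattice $\langle h,\beta\rangle$ (of Gram form $\mathrm{diag}(2d,\beta^2)$ when $h,\beta$ span a saturated sublattice) while tracking the quantity $\beta\cdot\ell$, I expect the $2$-torsion condition to fail --- so that $[-r_\beta]\neq1$ --- unless $\beta^2=-2d$ and $2d\mid\beta\cdot\ell$. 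This yields the remaining clause of b) and completes the identification with conditions a)--b); alternatively, since this step is exactly the $m=2$ output of Theorem~\ref{thm:HK_beta_reflection_numerical}, one may simply specialize that theorem.
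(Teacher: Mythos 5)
Your proposal is correct and follows essentially the same route as the paper: normality from Section~\ref{sec:isometries_lattices_nomality}, the general ramification criterion of Theorem~\ref{thm:divisori_ref} applied with $\Gamma=\widehat O(\Lambda_{K3^{[2]}},h)$ inside $O(h^\perp)$, and then the numerical characterization of the trivial classes $[\pm r_\beta]$, which you ultimately obtain by specializing Theorem~\ref{thm:HK_beta_reflection_numerical} to $t=m-1=1$ (where $s$ becomes the identity on $A_{h^\perp}\simeq \Z/2d\Z\times\Z/2\Z$, so the four cases collapse to the two clauses of condition b)) --- exactly the path taken by Corollary~\ref{cor:div_irred_HK1} and the fourfold section of the paper. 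Your self-contained argument for the clause $r_\beta\in\Gamma\iff\beta^2=-2$ (via integrality of the ambient reflection on $\Lambda_{K3^{[2]}}$ and the congruence $\beta^2\equiv -2\pmod 4$ when $\div_{\Lambda_{K3^{[2]}}}(\beta)=2$) is a correct and pleasant shortcut past the paper's discriminant-matrix computation, and although your Nikulin-gluing treatment of the clause involving $-r_\beta$ is only an unverified expectation, you explicitly close that step by citing Theorem~\ref{thm:HK_beta_reflection_numerical}, so there is no gap.
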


\textbf{Case $\boldsymbol{d=1}$}. General hyper-Kähler fourfolds of K3$^{[2]}$-type with a polarization of square $2$ (the divisibility is automatically $1$) are double EPW sextics \cite{OG06-EPWgeneral}. The Galois cover $\cover$ has degree $2$ and the associated involution is the so-called ``duality involution.'' Using the results of \cite{Image_per_map_HK4} on the image of the period map, our theorem shows that $\cover$ has a unique ramification divisor, $\cD_4$, and that $\cD_4$ meets the image of the period map. The divisor induced by $\cD_4$ on the moduli space of hyper-Kähler fourfolds of square $2$ does not meet the open locus of double EPW sextics.

\subsection*{Acknowledgement}
This paper is a reworking of results from my master thesis.  I would like to thank my
advisor Olivier Debarre for proposing me this problem, and for following me closely during the writing phase with many corrections and suggestions.


\section{An introduction to lattice theory}\label{sec:prel_latt}
    A \emph{lattice} $\Lambda$ is a free $\Z$-module of finite rank with a nondegenerate integral symmetric bilinear form $q$. The lattice $\Lambda$ is called \emph{even} if $q$ takes only even values. For each field $K$ containing $\Q$, we denote by $\Lambda_K$ the vector space $\Lambda\otimes_\Z K$. It is endowed with the extension $q_K$ of the bilinear form $q$, which is still nondegenerate.
    The signature of $\Lambda$ is the signature of $q_\R$ and will be denoted by $(n_+, n_ -)$.
    If $n_+$ or $n_-$ is zero, the lattice is called \emph{definite}; otherwise, $\Lambda$ is \emph{indefinite}. The \emph{dual lattice} of $\Lambda$ is
        $$
        \Lambda^\vee \coloneqq \{x\in \Lambda_\Q \mid  \forall y\in \Lambda \ \ x\cdot y\in \Z \} = \HomLin{\Z}{\Lambda}{\Z}.
        $$
    Clearly, there is an inclusion $\Lambda \inj \Lambda^\vee$. The \emph{discriminant group} of $\Lambda$ is the finite abelian group
        $$
        A_\Lambda = \Lambda^\vee / \Lambda.
$$
    We denote by $\disc(\Lambda)$ the cardinality of $A_\Lambda$. The lattice is called \emph{unimodular} if $A_\Lambda$ is trivial or equivalently if $\Lambda ^\vee = \Lambda$. For each $x\in\Lambda$ nonzero, the \emph{divisibility} of $x$, denoted by $\div(x)$, is the positive generator of the ideal $x\cdot \Lambda \subset \Z$.
    Thus, the element
    $
    x_* = \left[\frac{x}{\div(x)}\right]
    $
    is an element of $A_\Lambda$ of order $\div(x)$. The \emph{length} of a lattice $\Lambda$, denoted by $\ell(\Lambda)$, is the minimal number of generators of its discriminant group.\\

    When $\Lambda$ is an even lattice, we obtain a quadratic form $q_\Lambda$ on the discriminant group $A_\Lambda$ with values in $\Q/2\Z$, given by $q_\Lambda([x]) \equiv q_\Q(x) \pmod{2\Z}$ for all $x\in \Lambda^\vee$.
The \emph{group of isometries} of $A_\Lambda$, denoted by $O(A_\Lambda)$, is the group of group automorphisms of $A_\Lambda$ that preserve $q_\Lambda$.\\


    We will denote by $\Z$ the lattice of rank 1 with intersection matrix 1. More generally, we will denote by $\Z(n)$ the lattice of rank 1 with intersection matrix $n$, for all $n>0$, and write $\Z(n)=\Z k$ if the lattice is generated by the vector $k$. Moreover, we let $U$ (the hyperbolic plane) be the even unimodular lattice generated by two vectors $e$ and $f$ such that $e^2 =f^2 = 0$ and $e\cdot f = 1$. There is a unique positive definite  even unimodular lattice of rank 8, which we denote by $E_8$. We indicate by $E_8(-1)$ the lattice obtained by inverting the sign of the quadratic form on $E_8$.\\

    Observe that each isometry $f$ of $\Lambda$ induces an isometry of the discriminant group $A_\Lambda$, given by $r(f)([x]) = [f_\Q(x)]$ for all $[x]\in A_\Lambda$.

    \begin{theorem}[{\cite[Theorem~1.14.2]{Nikulin}}]\label{thm:Nik_surjMor}
        Let $\Lambda$ be an even indefinite lattice with ${\ell(\Lambda) + 2 \le \rk(\Lambda)}$. Then the morphism
        $
        r : O(\Lambda)\to O(A_\Lambda)
        $
        is surjective.
    \end{theorem}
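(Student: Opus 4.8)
The plan is to prove surjectivity by a local-to-global argument, exploiting that both the discriminant group and its automorphism group decompose as products over primes. Since $A_\Lambda$ is finite, it splits as $A_\Lambda=\bigoplus_p (A_\Lambda)_p$ into its $p$-primary components, the form $q_\Lambda$ respects this decomposition, and hence $O(A_\Lambda)=\prod_p O\bigl((A_\Lambda)_p\bigr)$. Moreover $(A_\Lambda)_p$ with its form is recovered from the $p$-adic lattice $\Lambda_{\Z_p}=\Lambda\otimes_\Z\Z_p$, so each factor $O\bigl((A_\Lambda)_p\bigr)$ is the target of a local reduction map $O(\Lambda_{\Z_p})\to O\bigl((A_\Lambda)_p\bigr)$. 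Given $\bar\phi\in O(A_\Lambda)$, I would first lift it at each place to a local isometry and then assemble these into one global isometry of $\Lambda$ inducing $\bar\phi$. In this scheme the rank hypothesis $\ell(\Lambda)+2\le\rk(\Lambda)$ is what makes the \emph{local} lifting possible, while indefiniteness is what makes the \emph{global} assembly possible. (If $\rk(\Lambda)=2$ the hypothesis forces $\ell(\Lambda)=0$, so $A_\Lambda$ is trivial and there is nothing to prove; thus I may assume $\rk(\Lambda)\ge 3$.)

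For the local step, fix a prime $p$ and take the Jordan decomposition of $\Lambda_{\Z_p}$ as an orthogonal sum of $\Z_p$-lattices scaled by powers of $p$. The discriminant $(A_\Lambda)_p$ sees only the non-unimodular Jordan blocks, of which at most $\ell_p(\Lambda)\le\ell(\Lambda)$ are present; the inequality $\rk(\Lambda)\ge\ell(\Lambda)+2$ then guarantees that the unimodular block of $\Lambda_{\Z_p}$ has rank at least $2$. With such a free unimodular summand available one can realize every automorphism of the $p$-adic discriminant form by an isometry of $\Lambda_{\Z_p}$ — concretely by combining diagonal isometries of the individual Jordan blocks with Eichler-type transvections supported on the unimodular part — giving surjectivity of $O(\Lambda_{\Z_p})\to O\bigl((A_\Lambda)_p\bigr)$ at every $p$. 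At the real place there is no discriminant constraint, so any element of $O(\Lambda_\R)$ is admissible; here I would only record that, because $\Lambda$ is indefinite, $O(\Lambda_\R)$ is noncompact, a point needed below.

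The final and hardest step is to pass from this collection of local isometries $(\phi_p)_p$ to a genuine global isometry $\phi\in O(\Lambda)$ inducing the prescribed $\bar\phi$. The natural tool is strong approximation for the spin group $\mathrm{Spin}(\Lambda_\Q)$, which applies precisely because $\Lambda$ is indefinite of rank $\ge 3$ (so $\mathrm{Spin}$ is noncompact at the real place): it lets one approximate the adelic isometry assembled from the $\phi_p$ by a global one that agrees with each $\phi_p$ modulo a high power of $p$, hence induces exactly $\bar\phi$ on the finite group $A_\Lambda$. The main obstacles I anticipate are bookkeeping the spinor norm — strong approximation controls only the image of $\mathrm{Spin}$, so one must check that the spinor-norm and determinant conditions can be met, again using the room provided by the unimodular local blocks and by the noncompact real place — and verifying that approximating closely enough at the finitely many relevant primes forces the discriminant action to be $\bar\phi$ on the nose. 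An alternative route, closer to the style of the references, would realize the same result through the existence and uniqueness of a primitive embedding of $\Lambda$ into an even unimodular lattice $L$: the classification of indefinite even unimodular lattices by signature provides abundant isometries of $L$, and the rank condition guarantees uniqueness of the embedding, which transports isometries of the orthogonal complement back to $\Lambda$; this repackages the same local-to-global content, with uniqueness of the embedding playing the role that strong approximation plays above.
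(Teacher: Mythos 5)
A preliminary caveat: the paper itself offers no proof of this statement --- it is imported verbatim from Nikulin, and the citation stands in for the proof --- so the only meaningful comparison is with the argument in the cited source, which rests on Eichler--Kneser strong approximation. Measured against that, your outline reproduces the correct architecture: the orthogonal splitting $O(A_\Lambda)\simeq\prod_p O\bigl((A_\Lambda)_p\bigr)$, local lifting at each prime, and global assembly via strong approximation for $\mathrm{Spin}$, legitimate because $\Lambda$ is indefinite of rank $\ge 3$. Your two reductions are also sound: for $\rk(\Lambda)=2$ the hypothesis forces $\Lambda$ unimodular, and at each $p$ the unimodular Jordan block has rank at least $\rk(\Lambda)-\ell(\Lambda)\ge 2$, since the non-unimodular blocks contribute exactly $\ell\bigl((A_\Lambda)_p\bigr)$ to the rank.

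Still, as written this is a plan rather than a proof: both pivotal steps are asserted or deferred, and they are where the theorem actually lives. (i) Local surjectivity of $O(\Lambda\otimes\Z_p)\to O\bigl((A_\Lambda)_p\bigr)$ is a nontrivial theorem in its own right, the case $p=2$ especially; moreover it holds for \emph{every} $\Z_p$-lattice, with no rank hypothesis at all, so your claim that $\ell(\Lambda)+2\le\rk(\Lambda)$ is ``what makes the local lifting possible'' misplaces the hypothesis. (ii) The hypothesis does its genuine work in exactly the step you name as the main anticipated obstacle and then leave open: strong approximation governs only the spinor kernel, so before invoking it one must correct the chosen local lifts --- by isometries of $\Lambda\otimes\Z_p$ acting trivially on the discriminant --- so as to normalize their determinants and spinor norms. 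This correction is possible precisely because the rank-$\ge 2$ unimodular block at every $p$ supplies, inside the kernel of $O(\Lambda\otimes\Z_p)\to O\bigl((A_\Lambda)_p\bigr)$, reflections of determinant $-1$ and rotations whose spinor norms exhaust the units $\Z_p^*$ modulo squares (Kneser's condition, which is also what makes the genus of $\Lambda$ coincide with its spinor genus); one further needs rational rotations of the indefinite space $\Lambda_\Q$ realizing prescribed spinor norms to absorb the remaining valuations. Executing this bookkeeping is the real content of the theorem, so leaving it as an obstacle to be checked leaves the load-bearing step missing: what you have is an accurate table of contents for Nikulin's proof rather than a proof. The alternative route you sketch, via uniqueness of primitive embeddings into an even unimodular lattice, does not sidestep the issue, since those uniqueness statements are themselves established by the same local--global machinery.
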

    We denote by $\widetilde O(\Lambda)$ the kernel of this morphism and we call it the \emph{stable orthogonal group}. We also define the group
    $$\widehat O(\Lambda) = \{f \in O(\Lambda)\mid \bar{f} =\pm \id \in O(A_\Lambda)\}.$$ Clearly, $\widetilde O(\Lambda)$ is a subgroup of $\widehat O(\Lambda)$ of index at most 2 and both $\widetilde O(\Lambda)$ and $\widehat O(\Lambda)$ are normal subgroups of $O(\Lambda)$.\\

    We will use the following result, proved in \cite[Satz~10.4]{Eichler}.
    \begin{lemma}[Eichler]\label{lemma:eichler}
        Let $\Lambda$ be a even lattice containing the direct sum of two hyperbolic planes. The $\widetilde O(\Lambda)$-orbit of a primitive vector $h$ is uniquely determined by the integer $h^2$ and the element $h_* = [h/\div(h)]$ of $A_\Lambda$.
    \end{lemma}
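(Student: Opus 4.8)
The easy direction is immediate: any isometry preserves $h^2$, and elements of $\widetilde O(\Lambda)$ act trivially on $A_\Lambda$, hence fix $h_*$, so these two invariants are constant along $\widetilde O(\Lambda)$-orbits. The content is the converse, and the plan is to produce, for each admissible pair $(h^2,h_*)$, enough isometries inside $\widetilde O(\Lambda)$ to carry any primitive vector with these invariants to a single normal form; transitivity on each such set of vectors is exactly what is needed. The isometries I would use are the \emph{Eichler transvections}. Fix a decomposition $\Lambda = U_1\oplus U_2\oplus L_0$ with $U_i = \langle e_i,f_i\rangle$ a hyperbolic plane (available by hypothesis), so that $A_\Lambda \cong A_{L_0}$ since $U_1\oplus U_2$ is unimodular. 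For an isotropic vector $e$ and a vector $a\in e^\perp$ set
$$
t(e,a)(v)=v+(e\cdot v)\,a-(a\cdot v)\,e-\tfrac12(a\cdot a)(e\cdot v)\,e .
$$
First I would verify that $t(e,a)$ is an isometry of $\Lambda$ and that it lies in $\widetilde O(\Lambda)$: for $v\in\Lambda^\vee$ the difference $t(e,a)(v)-v$ has coefficients $e\cdot v,\ a\cdot v\in\Z$ (because $e,a\in\Lambda$), hence lies in $\Lambda$, so $t(e,a)$ acts trivially on $A_\Lambda$.

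Second, I would compute the effect of these transvections on the coordinates of a primitive vector $h=\alpha_1 e_1+\beta_1 f_1+\alpha_2 e_2+\beta_2 f_2+c$ with $c\in L_0$. Taking $e=e_1$ and $a=x\in L_0\subset e_1^\perp$ gives $t(e_1,x)(h)=h+\beta_1 x-(x\cdot c)e_1-\tfrac12 x^2\beta_1 e_1$, so its $L_0$-component changes by $\beta_1 x$; similarly $t(f_1,x)$ changes it by $\alpha_1 x$, and the analogous transvections attached to $U_2$ change it by $\beta_2 x$ and $\alpha_2 x$. Transvections with $e$ and $a$ chosen among isotropic generators of the two different planes let me modify the four hyperbolic coordinates. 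Running this as a reduction: using the $\gcd$ of the hyperbolic coordinates I can clear the $L_0$-component of $h$ modulo $d\,L_0$, where $d=\div(h)$, bringing $c$ to $d\,v$ for a chosen lift $v\in L_0^\vee$ of $h_*$ (recall $h_*$ has order $d$, so $d\,v\in L_0$); I can then normalise the hyperbolic coordinates to $d\,e_1+d\beta' f_1$, with the single remaining parameter $\beta'$ fixed by $h^2=2d^2\beta'+(dv)^2$, which has an integral solution because $(h/d)^2\equiv v^2\equiv q_\Lambda(h_*)\pmod{2\Z}$. This yields a normal form $h_0=d\,e_1+d\beta' f_1+d\,v=d\,(e_1+\beta' f_1+v)$ depending only on $(h^2,h_*)$ and on the auxiliary choice of lift $v$; one checks directly that $h_0$ is primitive of divisibility $d$ and square $h^2$.

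The main obstacle is the careful bookkeeping tying the reduction to the two discrete invariants, and this is exactly where both hyperbolic planes are needed. With a single plane the coefficients available for clearing $c$ need not be coprime to $d$, and one cannot simultaneously reduce the $L_0$-component and keep control of the divisibility; the second plane supplies the extra isotropic directions that make the relevant $\gcd$ equal to $1$ and keep $h$ primitive throughout the process. I would also have to check that two different lifts $v,v'\in L_0^\vee$ of the same class $h_*$ yield $\widetilde O(\Lambda)$-equivalent normal forms: their difference lies in $L_0$, and a further round of Eichler transvections identifies the two forms. Granting these verifications, any primitive $h$ and any primitive $h'$ with $h^2=(h')^2$ and $h_*=h'_*$ reduce to the same $h_0$, so they lie in a single $\widetilde O(\Lambda)$-orbit, which is the assertion of the lemma.
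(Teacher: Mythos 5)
Your strategy is the standard modern route to Eichler's criterion (the transvection argument used by Gritsenko--Hulek--Sankaran); note that the paper itself gives no proof at all, citing Eichler's Satz~10.4, so there is no in-paper argument to compare against. The parts of your sketch that you do make precise are correct: $t(e,a)$ lies in $\widetilde O(\Lambda)$ (the coefficient $\tfrac12(a\cdot a)(e\cdot v)$ you gloss over is also integral, because $\Lambda$ is even); the coordinate formulas for $t(e_1,x)$, $t(f_1,x)$ are right; the normal form $h_0=d(e_1+\beta' f_1+v)$ is well defined, with $\beta'\in\Z$ by the congruence $(h/d)^2\equiv v^2\pmod{2\Z}$, and is primitive of divisibility $d$, square $h^2$ and class $h_*$ because $h_*$ has exact order $d$; and the independence of the choice of lift is genuinely easy --- a single transvection $t(f_1,v'-v)$ carries one normal form to the other.

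The genuine gap is the central reduction, which you compress into one sentence, and that sentence is false as stated. Writing $h=\alpha_1e_1+\beta_1f_1+\alpha_2e_2+\beta_2f_2+c$, the moves $t(e_i,x),t(f_i,x)$ with $x\in L_0$ change $c$ only modulo $gL_0$, where $g=\gcd(\alpha_1,\beta_1,\alpha_2,\beta_2)$, and $g$ is in general a \emph{proper} multiple of $d=\div(h)$: one has $d=\gcd\bigl(g,\div_{L_0}(c)\bigr)$, and in the extreme case $h=c\in L_0$ all four hyperbolic coordinates vanish, $g=0$, and these moves do not touch $c$ at all. So ``clearing the $L_0$-component modulo $dL_0$ using the gcd of the hyperbolic coordinates'' is not an available operation; before any clearing can happen one must drive the hyperbolic gcd down to $d$, which requires exploiting the interaction terms (e.g.\ $t(e_1,x)$ also shifts $\alpha_1$ by $-x\cdot c-\tfrac12x^2\beta_1$, i.e.\ by multiples of $\div_{L_0}(c)$ up to controlled corrections), and one must separately show that the coupled moves $t(e_1,e_2)$, $t(e_1,f_2)$, $t(f_1,e_2)$, $t(f_1,f_2)$ --- each of which changes \emph{two} hyperbolic coordinates simultaneously --- implement a Euclidean-algorithm reduction of the hyperbolic part to $(d,\ast,0,0)$ without disturbing $c$. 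This interlocking iteration, where the $L_0$-component and the hyperbolic coordinates must be reduced in tandem while the divisibility is the only quantity that stays put, is precisely the content of Eichler's theorem; your ``granting these verifications'' defers exactly it. What you have is a correct skeleton, correct invariants, and a correct normal form, but not a proof of transitivity.
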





    \subsection{Extension of isometries of a sublattice}\label{sec:prelim_iso}
        Let $M$ be a primitive sublattice of an even lattice $L$. We characterize isometries of $M^\perp$ that extend to isometries of $L$, following \cite[Section~1.5]{Nikulin}.\\

        We define
        $$
        O(L, M) = \{f\in O(L) \tq f\vert_M =\id \},
        $$
        the group of isometries of $L$ that are the identity on $M$. Analogously, we define the groups $\widetilde O(L, M) = \widetilde O(L)\cap O(L,M) $ and $\widehat O(L, M) = \widehat O(L)\cap O(L, M)$.

        Clearly, each isometry in $O(L, M)$ restricts to an isometry of $M^\perp$; namely, we have a restriction morphism
        $$
        \mathrm{res}: O(L, M) \map O(M^\perp).
        $$
        We say that an isometry of $O(M^\perp)$ extends to an isometry of $O(L, M)$ if it is in the image of this restriction morphism. \\

        Consider the chain of sublattices
        \begin{equation}\label{eqn:prel_lattice_chain}
            M \oplus M^\perp < L < L^\vee < M^\vee \oplus (M^\perp)^\vee,
        \end{equation}
        from which we obtain the subgroup
        $$
            H \coloneqq L / (M \oplus M^\perp) < (M^\vee \oplus (M^\perp)^\vee )/ (M \oplus M^\perp)= A_M \times A_{M^\perp}.
        $$

        Moreover, we consider the projections
        $$
            p: H \inj A_M \times A_{M^\perp} \surj A_{M^\perp} \qquad\text{and}\qquad
            p': H \inj A_M \times A_{M^\perp} \surj A_{M}.    $$

        Since $M$ is primitive in $L$, the morphism $p$ is injective. Indeed, each $\ell\in L$ can be written as $\ell = rm + sm'$ with $r,s\in \Q$ and $m$ and $m'$ vectors in $M$ and $M^\perp$ respectively. Since $\ell\cdot L\subset \Z$, we obtain that $rm$ is an element of $M^\vee$ and $sm'$ is an element of $(M^\perp)^\vee$. Hence,
        $$
            p([\ell]) = [sm'] = 0 \in A_{M^\perp} \qquad\text{implies}\qquad sm'\in M^\perp.
        $$
        Therefore, the vector $\ell-sm' = rm$ is in $L$. Since $M$ is primitive, this implies $rm\in M$, and therefore $\ell\in M \oplus M^\perp$.
        Analogously, we show that the morphism $p'$ is injective.\\

        By computing the indices from the chain \eqref{eqn:prel_lattice_chain}, we obtain
        \begin{equation}\label{eqn:prel_disc_lattices}
            \disc(M)\disc(M^\perp) = |H|^2\disc(L).
        \end{equation}
        Moreover, the injectivity of $p$ and $q$ implies $|H|\le \disc(M^\perp)$ and $|H|\le \disc(M)$.

        \begin{proposition}[{\cite[Corollary~1.5.2]{Nikulin}}]\label{prop:extend_iso}
            An isometry $g\in O(M^\perp)$ extends to $O(L,M)$ if and only if $\bar g\vert_{p(H)} = \id$.
        \end{proposition}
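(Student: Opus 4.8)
The plan is to reformulate the extension problem as a condition on how the isometry acts on the overlattice $L$ inside $M^\vee \oplus (M^\perp)^\vee$, and then to read off the criterion from the fact that $H$ is the graph of a ``glue'' isomorphism.

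First I would observe that any $f \in O(L,M)$ automatically preserves $M^\perp$: since $f$ fixes $M$ pointwise, for $x \in M^\perp$ and $m \in M$ we have $f(x)\cdot m = f(x)\cdot f(m) = x\cdot m = 0$, so $f(M^\perp)=M^\perp$ and $f|_{M^\perp} =: g \in O(M^\perp)$. Thus the restriction morphism $\mathrm{res}$ does send $O(L,M)$ into $O(M^\perp)$, and the content of the statement is exactly which $g$ lie in its image. Conversely, given $g \in O(M^\perp)$, I would set $\phi = \id_M \oplus g$, an isometry of $M \oplus M^\perp$, and extend it $\Q$-linearly to an isometry $\phi_\Q$ of $L_\Q = M_\Q \oplus (M^\perp)_\Q$. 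Because isometries preserve dual lattices and $\phi_\Q$ restricts to the identity on $M_\Q$ and to $g$ on $(M^\perp)_\Q$, the map $\phi_\Q$ preserves $M^\vee \oplus (M^\perp)^\vee$ and fixes the subgroup $M \oplus M^\perp$. Hence $g$ extends to an element of $O(L,M)$ if and only if $\phi_\Q(L) = L$.

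Next I would descend this condition to the discriminant group. The map induced by $\phi_\Q$ on $(M^\vee \oplus (M^\perp)^\vee)/(M \oplus M^\perp) = A_M \times A_{M^\perp}$ is $\id \times \bar g$, where $\bar g = r(g) \in O(A_{M^\perp})$. Under the identification $H = L/(M \oplus M^\perp)$, the equality $\phi_\Q(L)=L$ becomes $(\id \times \bar g)(H) = H$; and since $\bar g$ is an automorphism of the finite group $A_{M^\perp}$, it is enough to verify the inclusion $(\id \times \bar g)(H) \subseteq H$. Finally I would use the injectivity of both $p$ and $p'$, established above, to identify $H$ with the graph of an isomorphism $\gamma \colon p'(H) \to p(H)$, characterized by $(a,b)\in H \iff b = \gamma(a)$. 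For $(a,b)\in H$ we have $(\id \times \bar g)(a,b) = (a, \bar g(b))$, which lies in $H$ precisely when $\bar g(b) = \gamma(a) = b$. Therefore $(\id \times \bar g)(H) \subseteq H$ holds if and only if $\bar g(b) = b$ for every $b \in p(H)$, that is, $\bar g|_{p(H)} = \id$, which is the claimed criterion.

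I expect the main obstacle to be the bookkeeping in the first step: checking carefully that extending $g$ by the identity on $M$ and then imposing $\phi_\Q(L)=L$ really is equivalent to the existence of an extension in $O(L,M)$, and that the induced action on $A_M \times A_{M^\perp}$ is exactly $\id \times \bar g$. Once $H$ is recognized as the graph of $\gamma$, the equivalence reduces to the one-line computation above.
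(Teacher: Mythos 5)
Your proof is correct, and it in fact establishes the proposition in the generality in which it is stated, whereas the paper only proves the special case $M=\Z h$ of a rank-one primitive sublattice (the only case it uses later). Both arguments share the same first reduction: extend $g$ to the $\Q$-linear isometry $\id_M\oplus g$ of $L_\Q$ and observe that $g$ extends to $O(L,M)$ if and only if this extension maps $L$ onto $L$. The proofs diverge in how that condition is translated into $\bar g\vert_{p(H)}=\id$. The paper, working with $M=\Z h$, writes each $\ell\in L$ as $\ell = rh+sv$ with $v\in h^\perp$ and checks by an explicit divisibility computation that $\bar g(p([\ell]))=p([\ell])$ is equivalent to $\tilde g(\ell)\in L$. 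You instead pass to the quotient $A_M\times A_{M^\perp}$, where the extension induces $\id\times\bar g$, and use the injectivity of both projections $p$ and $p'$ (established just before the proposition) to identify $H$ with the graph of an isomorphism $p'(H)\to p(H)$, so that stability of $H$ under $\id\times\bar g$ forces $\bar g$ to fix $p(H)$ pointwise; the finiteness of $H$ lets you replace equality $(\id\times\bar g)(H)=H$ by the inclusion $(\id\times\bar g)(H)\subseteq H$. Your structural route is essentially Nikulin's original argument: it buys the statement for sublattices of arbitrary rank and isolates the ``glue'' mechanism cleanly, while the paper's computation is rank-one-specific but more hands-on, and it sets up the explicit vector manipulations (the element $k_1$ generating $p(H)$, the basis $h_1,h_2$ of the non-unimodular part of $h^\perp$) that are reused in Section~\ref{sec:isometries_lattices_nomality}.
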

        \begin{proof} We prove the proposition in the case $M=\Z h$ for some primitive vector $h$ of $L$. In this case, we will denote by $O(L,h)$ the group $O(L, \Z h)$.

            Each isometry $g\in O(h^\perp)$ extends uniquely to an isometry $\tilde g\in O(L_\Q, h)$, defined by $\tilde g(h)=h$ and $\tilde g\vert_{h^\perp_\Q} = g$. The isometry $g$ extends to $O(L,h)$ if and only if $\tilde g(\ell) \in L$   for all $\ell\in L.$

            Observe that each vector $\ell\in L$ can be written as $\ell = r h + s v$, with $v\in h^\perp$ and $r,s\in \Q$. Therefore, $\tilde g(\ell) = r h + s g(v)$.

            Since $\ell\cdot h^\perp\subset \Z$, we obtain that $b = s\div(v)$ is an integer. Notice moreover that $\div(g(v))=\div(v)$ because $g$ in an isometry of $h^\perp$. Hence, we obtain
            $$p([\ell])=[sv] = b \left[\frac{v}{\div(v)}\right]\in A_{h^\perp} \qquad \text{and} \qquad  \bar g(p([\ell])) = [sg(v)]=b \left[\frac{g(v)}{\div(g(v))}\right]\in A_{h^\perp}.$$
            Observe that $\bar g(p([\ell])) = p([\ell])$ if and only if $s(g(v) - v) \in h^\perp$, which is equivalent to
            \begin{equation}\label{eqn:extend_iso}
                        \tilde g(\ell)-\ell \in h^\perp = h^\perp_\Q \cap L.
            \end{equation}
            Since $\ell\in L$ and $\tilde g(\ell)-\ell\in h^\perp_\Q$, equation \eqref{eqn:extend_iso} is equivalent to $\tilde g(\ell)\in L$.
        \end{proof}

        Therefore,
        \begin{equation}\label{eqn:lattice_ext_iso}
            O(L, h)=\{g\in O(h^\perp) \mid \bar{g}\vert_{p(H)}=\id\}.
        \end{equation}


        \begin{proposition}\label{prop:extend_iso_tilde}
            For each primitive vector $h\in L$, there is an inclusion $\widetilde O(h^\perp)\inj \widetilde O(L,h)$.
        \end{proposition}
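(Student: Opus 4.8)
The plan is to define the claimed inclusion by sending each $g\in\widetilde O(h^\perp)$ to its unique extension $\tilde g\in O(L_\Q,h)$ determined by $\tilde g(h)=h$ and $\tilde g\vert_{h^\perp_\Q}=g$, exactly as in the proof of Proposition~\ref{prop:extend_iso}, and then to verify that $\tilde g$ in fact lands in $\widetilde O(L,h)$. Because this extension to $L_\Q$ is unique, the assignment $g\mapsto\tilde g$ is automatically an injective group homomorphism; so the only real content is to check the two membership conditions $\tilde g\in O(L,h)$ and $\tilde g\in\widetilde O(L)$.

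First I would check that $g$ extends to $O(L,h)$ at all. By hypothesis $g\in\widetilde O(h^\perp)$ acts as the identity on the whole discriminant group $A_{h^\perp}$; in particular $\bar g\vert_{p(H)}=\id$. By the characterization \eqref{eqn:lattice_ext_iso} (equivalently Proposition~\ref{prop:extend_iso}), this is precisely the condition guaranteeing that $g$ extends to an isometry $\tilde g\in O(L,h)$.

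The main step is then to show that $\tilde g$ acts trivially on $A_L=L^\vee/L$. Using the chain \eqref{eqn:prel_lattice_chain} with $M=\Z h$, I would write an arbitrary $x\in L^\vee\subset (\Z h)^\vee\oplus(h^\perp)^\vee$ as $x=\alpha h+w$, with $\alpha h\in(\Z h)^\vee$ and $w\in(h^\perp)^\vee\subset h^\perp_\Q$. Since $\tilde g$ fixes $h$ and restricts to $g$ on $h^\perp_\Q$, this gives $\tilde g(x)-x=g(w)-w$. Now the triviality of $g$ on $A_{h^\perp}$ means exactly that $g(w)-w\in h^\perp$ for every $w\in(h^\perp)^\vee$, and $h^\perp=h^\perp_\Q\cap L\subset L$. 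Hence $\tilde g(x)-x\in L$ for every $x\in L^\vee$, so $\tilde g\in\widetilde O(L)$, and therefore $\tilde g\in\widetilde O(L)\cap O(L,h)=\widetilde O(L,h)$.

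I do not expect a serious obstacle here: the argument is essentially a bookkeeping computation in the dual lattice. The only points requiring care are keeping track of which summand of $(\Z h)^\vee\oplus(h^\perp)^\vee$ each vector lives in when using the dual decomposition, and the observation that the correction term $g(w)-w$ lands in the \emph{integral} lattice $h^\perp$ rather than merely in $h^\perp_\Q$ — this is the single place where the stability hypothesis $g\in\widetilde O(h^\perp)$ (as opposed to $g\in O(h^\perp)$, or even $g\in\widehat O(h^\perp)$) is genuinely used, and it is what forces the extension to be stable rather than just $\pm\id$ on $A_L$.
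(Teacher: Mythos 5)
Your proof is correct and takes essentially the same route as the paper: both first extend $g$ to $O(L,h)$ via Proposition~\ref{prop:extend_iso} (using $p(H)<A_{h^\perp}$), and then deduce triviality of the extension on $A_L$ from the chain \eqref{eqn:prel_lattice_chain} --- the paper phrases this abstractly by noting $A_L$ is a subquotient of $A_{\Z h}\times A_{h^\perp}$, while you unwind the same fact into the explicit computation $\tilde g(x)-x=g(w)-w\in h^\perp\subset L$ for $x=\alpha h+w\in L^\vee$. Injectivity is likewise the same observation in both arguments (restriction is a left inverse to extension).
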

        \begin{proof}
            Since $p(H)<A_{h^\perp}$, Proposition~\ref{prop:extend_iso} implies that each isometry $g\in \widetilde O(h^\perp)$ extends to an isometry of $O(L,h)$, which we will still denote by $g$. By definition of $\widetilde O(h^\perp)$, the isometry $g$ satisfies $\bar g\vert_{A_{h^\perp}}=\id$. Moreover $g\vert_{\Z h} =\id$, hence $\bar g$ is the identity on $A_{\Z h}\times A_{h^\perp}$, and therefore on $A_L$, which is a subquotient of $A_{\Z h}\times A_{h^\perp}$ (use \eqref{eqn:prel_lattice_chain}).

            Finally, the morphism $\widetilde O(h^\perp)\to \widetilde O(L,h)$ is injective because restriction is a left inverse.
        \end{proof}

        We have the following chain of inclusions
        \begin{equation}\label{eqn:prel_chain_sub}
            \widetilde O(h^\perp)\stackrel{i_1}{\longhookrightarrow} \widetilde O(L,h) \stackrel{i_2}{\longhookrightarrow} \widehat O(L,h) \stackrel{i_3}{\longhookrightarrow} O(L,h) \stackrel{i_4}{\longhookrightarrow} O(h^\perp),
        \end{equation}
        where the index of $i_2$ divides $2$ and the inclusions $i_3$ and $i_3i_2$ define normal subgroups of $O(L,h)$.






\section{Period domains of type IV}\label{sec:periods_domains}
    Let $\Lambda$ be an even indefinite lattice  of signature $(2, n_-)$ such that $n_-\ge 2$. The zero locus of the quadratic form induced on $\Lambda_\C$ is a smooth quadric in $\Proj(\Lambda_\C)$. The open analytic subset
    $$
    \cD_\Lambda = \{[x]\in \Proj(\Lambda_\C) \mid x\cdot x = 0, \quad x\cdot \bar{x}>0 \}
    $$
    of this quadric is a complex manifold, called the \emph{period domain}. One has
        $$
    \cD_\Lambda = \cD_\Lambda^+ \sqcup \cD_\Lambda^-.
    $$
    These two connected components are diffeomorphic, exchanged by complex conjugation.

    \begin{remark}

        If $\Lambda$ is isomorphic to $U\oplus \Lambda'$ for some lattice $\Lambda'$, there exists an isometry $g\in \widetilde O(\Lambda)$ that exchanges the two connected components of $\cD_\Lambda$ (see \cite[Proposition 5.6]{Dolg96}).
    \end{remark}

    Since each isometry of $\Lambda$ acts on $\Proj(\Lambda_\C)$ and preserves $\cD_\Lambda$, we get an action of $ O(\Lambda)$ on $\cD_\Lambda$, which is properly discontinuous {\cite[Remark~6.1.10]{K3Huy}}. Recall the following theorem by Borel--Baily.
    \begin{theorem}[{\cite[Theorem~6.1.13]{K3Huy}}]\label{thm:per_dom_qpvar}
        For all subgroups $\Gamma$ of $O(\Lambda)$ of finite index, the quotient $\cD_\Lambda/\Gamma$ is a normal quasi-projective variety.
    \end{theorem}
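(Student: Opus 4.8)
The plan is to recognize each connected component $\cD_\Lambda^+$ of the period domain as a Hermitian symmetric domain of type IV and then to invoke the Baily--Borel theory of quotients of bounded symmetric domains by arithmetic groups. Concretely, $\cD_\Lambda^+$ is the symmetric space attached to the real orthogonal group $O(\Lambda_\R)\cong O(2,n_-)$, and the lattice-theoretic subgroup $O(\Lambda)<O(\Lambda_\R)$ is arithmetic by construction, being the stabilizer of an integral structure. Any finite-index subgroup $\Gamma<O(\Lambda)$ is therefore again arithmetic, so the hypotheses of Baily--Borel are in place.

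The argument then splits into two parts. The first concerns the analytic structure and normality. Since the action of $O(\Lambda)$ on $\cD_\Lambda$ is properly discontinuous, as recalled above, so is that of the finite-index subgroup $\Gamma$. The quotient of a complex manifold by a properly discontinuous group action is a normal complex analytic space, because locally around the image of a point $x$ the quotient is modeled on $\C^{n_-}/G_x$, where $G_x$ is the finite stabilizer of $x$; such finite quotients of a smooth space are normal by Cartan's theorem. This endows $\cD_\Lambda/\Gamma$ with the structure of a normal complex space.

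The second part, where the real content lies, is quasi-projectivity, and here one applies the Baily--Borel theorem directly. The graded ring of $\Gamma$-modular forms on $\cD_\Lambda^+$ is a finitely generated $\C$-algebra, and its $\mathrm{Proj}$ is a normal projective variety, the Satake--Baily--Borel compactification $(\cD_\Lambda^+/\Gamma)^*$, into which $\cD_\Lambda^+/\Gamma$ embeds as a Zariski-open subset whose complement is a finite union of lower-dimensional rational boundary components. Hence $\cD_\Lambda^+/\Gamma$ is quasi-projective. To pass to the full domain $\cD_\Lambda=\cD_\Lambda^+\sqcup\cD_\Lambda^-$, I would set $\Gamma^+=\{g\in\Gamma\mid g(\cD_\Lambda^+)=\cD_\Lambda^+\}$, which has index $1$ or $2$ in $\Gamma$; the quotient $\cD_\Lambda/\Gamma$ is then either $\cD_\Lambda^+/\Gamma^+$ when $\Gamma$ contains an element exchanging the two components, or a disjoint union of two copies of it otherwise, and in either case it is quasi-projective.

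The main obstacle is entirely contained in the second step: the construction of sufficiently many modular forms to separate points and tangent directions, together with the finite generation of the ring of modular forms, is the deep analytic heart of the Baily--Borel theorem. Since this result is classical and available in the cited reference, the role of the present argument is simply to verify that the geometric setup---a type IV domain together with a group of finite index in the arithmetic group $O(\Lambda)$---meets the hypotheses of Baily--Borel, after which normality and quasi-projectivity follow formally.
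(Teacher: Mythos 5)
Your proposal is correct and takes the same route as the paper, which proves this statement purely by citation to the Baily--Borel theorem ({\cite[Theorem~6.1.13]{K3Huy}}): you have simply unpacked that reference's standard argument (type IV Hermitian symmetric domain, arithmeticity of $\Gamma$, normality of the analytic quotient via Cartan, quasi-projectivity via the Satake--Baily--Borel compactification). The only minor imprecision is that when $\Gamma$ preserves the two components, $\cD_\Lambda^+/\Gamma$ and $\cD_\Lambda^-/\Gamma$ are identified only anti-holomorphically rather than being literal copies, but each is quasi-projective by the same argument, so the conclusion is unaffected.
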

    Moreover, if there exists $g\in \Gamma$ that exchanges the two components of $\cD_\Lambda$,  the variety $\cD_\Lambda/\Gamma$ is irreducible.



\section{Ramification divisors of covers of period spaces}\label{sec:ram_divisors}
Let $\Lambda$ be an even indefinite lattice of signature $(2, n_-)$, with $n_-\ge 2$. We fix a subgroup $\Gamma < O(\Lambda)$ of finite index. Theorem~\ref{thm:per_dom_qpvar} implies that
$$
\cD_\Lambda / \Gamma
$$
is a quasi-projective variety, which is irreducible if there exists an element of $\Gamma$ that exchanges the  two connected components of $\cD_\Lambda$. In the following, we will suppose that there exists such an element in $\Gamma$.
Observe that $-\id$ acts trivially on $\cD_\Lambda$, hence on $\cD_\Lambda / \Gamma$. Let $\bar \Gamma$ be the group generated by $\Gamma$ and $-\id$: then $\cD_\Lambda / \Gamma= \cD_\Lambda / \bar\Gamma$.\\

Let us fix another subgroup $O < O(\Lambda)$ of finite index such that
$$
        \Gamma \norm O < O(\Lambda).
$$
Since $-\id$ is in the center of $O(\Lambda)$, the group $\bar \Gamma$ is a normal subgroup of $\bar O$. The group
$$
    G = \bar O/ \bar \Gamma
$$
is a finite group that acts on $\cD_\Lambda / \Gamma$.
\begin{remark}\label{rmk:inv_elements}
    An element $[x]\in \cD_\Lambda / \Gamma$ is fixed by $g\in G$ if and only if there exists an isometry $f\in  O$ such that $[f]=g$ and $x$ is an eigenvector of $f_\C$.\\
    Indeed, if $[x]\in\cD_\Lambda / \Gamma$ is fixed by $g = [f]$, then $[f(x)]=[x]\in \cD_\Lambda / \Gamma$. This means that there exists $\tilde f\in \bar\Gamma$ such that the lines $f_\C(x)\C$ and $ \tilde f_\C(x) \C$ are equal. Replacing $f$ by $\tilde f^{-1} f$ we obtain that $x$ is an eigenvector of $f_\C$. Conversely, by definition, each eigenvector of $f_\C$ defines a line that is fixed by $[f]$.
\end{remark}

 Therefore, the action of $G$ on  $\cD_\Lambda / \Gamma$ is (very)-generally faithful: consider the subset
$$
    X = \bigcup_{g\in G \setminus \{\id\}}\bigcup_{\substack{g=[f] \\ \lambda \in \mathrm{Sp}(f)}} V_\lambda(f) ,
$$
of $\cD_\Lambda$ which is a countable union of closed subvarieties of $\cD_\Lambda$ of codimension greater than or equal to 1. Then, for $x$ not contained in $X$, the stabilizer of $[x]$ in $\cD_\Lambda / \Gamma$ is trivial.

Hence, the action of $G$ on $\cD_\Lambda / \Gamma$ yields a Galois cover  
\begin{equation}\label{eqn:galois_cover_period_dom}
\cover: \cD_\Lambda / \Gamma\map \cD_\Lambda / O
\end{equation}
with Galois group $G$.\\

The varieties $\cD_\Lambda / \Gamma$ and $\cD_\Lambda / O$ are normal varieties. By restricting the morphism $\cover$ to the preimage of the smooth locus of $\cD_\Lambda / O$, purity of the branch locus \cite[Exp. X, Theorem~3.1]{SGA1} implies that the branch locus has codimension 1.
We want to characterize the ramification divisors of the cover $\cover$, namely the irreducible algebraic divisors of $\cD_\Lambda / \Gamma$ contained in the fixed locus of a nontrivial element of $G$.

\subsection{Heegner divisors and reflections}
Let $\beta$ be a vector of $\Lambda$ with $\beta^2<0$. Since the lattice $\beta^\perp$ has signature $(2, n_- - 1)$, we observe that
$$
\cD_{\beta^\perp}= \cD_\Lambda \cap \Proj(\beta^\perp_\C) =  \{[x]\in \Proj(\beta^\perp_\C) \mid x\cdot x = 0,\quad x\cdot \bar{x}>0 \} 
$$
is not empty, and it is a hypersurface of $\cD_\Lambda$. Moreover,
$$
\Hh_{\beta^\perp} = \Im\left(\cD_{\beta^\perp} \map \cD_\Lambda / \Gamma \right)$$
is an algebraic divisor of $\cD_\Lambda / \Gamma$ \cite[Theorem 3.14]{Hasset00}. We observe that $\Hh_{\beta^\perp}$ is irreducible. Indeed, $\cD_{\beta^\perp}$ has 2 connected components exchanged by  complex conjugation, hence they are contained in two different components of $\cD_\Lambda$. This implies that they are identified in the quotient.

\begin{definition}
    A \emph{Heegner divisor} of $\cD_\Lambda / \Gamma$ is a divisor of the form $\Hh_{\beta^\perp}\subset \cD_\Lambda / \Gamma$ for some  $\beta\in\Lambda$ with $\beta^2<0$.
\end{definition}

\begin{lemma}\label{lemma:inv_div_equality}
    Let $\beta$ and $\gamma$ be primitive vectors of $\Lambda$ with negative squares. The divisors $\Hh_{\beta^\perp}$ and $\Hh_{\gamma^\perp}$ of $\cD_\Lambda / \Gamma$ are equal if and only if $\beta$ and $\gamma$ are in the same $\bar\Gamma$-orbit.
\end{lemma}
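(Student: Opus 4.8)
The plan is to lift the equality of Heegner divisors from $\cD_\Lambda/\Gamma$ up to $\cD_\Lambda$ and then recover the vectors $\beta,\gamma$ by a linear-algebra argument. Write $\pi\colon \cD_\Lambda\to \cD_\Lambda/\Gamma$ for the quotient map, so that $\Hh_{\beta^\perp}=\pi(\cD_{\beta^\perp})$ and hence
$$
\pi^{-1}(\Hh_{\beta^\perp})=\bigcup_{g\in\Gamma}g(\cD_{\beta^\perp})=\bigcup_{g\in\Gamma}\cD_{g(\beta)^\perp},
$$
using that $g(\beta^\perp)=g(\beta)^\perp$ for $g\in O(\Lambda)$ and that $g$ preserves $\cD_\Lambda$. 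The implication from right to left is then immediate: if $\gamma=\pm g(\beta)$ for some $g\in\Gamma$, then $\gamma^\perp=g(\beta)^\perp=g(\beta^\perp)$, so $\cD_{\gamma^\perp}=g(\cD_{\beta^\perp})$ and the two hypersurfaces have the same image under $\pi$.

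For the converse, suppose $\Hh_{\beta^\perp}=\Hh_{\gamma^\perp}$. Taking preimages gives $\cD_{\gamma^\perp}\subseteq \pi^{-1}(\Hh_{\gamma^\perp})=\pi^{-1}(\Hh_{\beta^\perp})=\bigcup_{g\in\Gamma}\cD_{g(\beta)^\perp}$. Fix a connected component $D$ of $\cD_{\gamma^\perp}$; it is an irreducible complex manifold contained in the countable union of the closed analytic hypersurfaces $\cD_{g(\beta)^\perp}$. Since $\cD_\Lambda$ is a Baire space and $\Gamma$ is countable, some $\cD_{g(\beta)^\perp}\cap D$ has nonempty interior in $D$; by the identity principle and irreducibility of $D$ this forces $D\subseteq\cD_{g(\beta)^\perp}$ for a single $g$.

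It remains to pass from the inclusion of analytic hypersurfaces to proportionality of the vectors. Because $\gamma^\perp$ and $g(\beta)^\perp$ have signature $(2,n_--1)$ with $n_--1\ge 1$, the quadrics cut out by $x\cdot x=0$ in $\Proj(\gamma^\perp_\C)$ and in $\Proj(g(\beta)^\perp_\C)$ are nondegenerate quadric hypersurfaces in $\Proj^{n_-}$ with $n_-\ge 2$; hence they are irreducible and span their ambient projective spaces. As $D$ is a nonempty open subset of the quadric in $\Proj(\gamma^\perp_\C)$, its Zariski closure is that entire quadric, and the inclusion $D\subseteq\cD_{g(\beta)^\perp}$ gives a containment of the two quadrics, which is an equality by irreducibility and equality of dimensions. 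Two quadrics that coincide span the same linear subspace, so $\gamma^\perp_\C=g(\beta)^\perp_\C$; taking orthogonal complements over $\C$ yields $\C\gamma=\C\,g(\beta)$. Since $\gamma$ and $g(\beta)$ are both primitive integral vectors on the same line, $\gamma=\pm g(\beta)$, that is, $\gamma$ lies in the $\bar\Gamma$-orbit of $\beta$.

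The only delicate point is the passage from ``$D$ is contained in a countable union of hypersurfaces'' to ``$D$ is contained in one of them'', where the irreducibility of $D$ together with the Baire category theorem (applicable because $\Gamma$ is countable) is essential. The remaining steps are routine, once one records the irreducibility and linear nondegeneracy of the quadric sections, both of which rely on the hypothesis $n_-\ge 2$.
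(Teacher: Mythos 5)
Your proof is correct and follows essentially the same route as the paper's: lift the equality along $\pi$ to a containment of $\cD_{\gamma^\perp}$ in the countable union $\bigcup_{g}\cD_{g(\beta)^\perp}$, use irreducibility of a connected component plus countability of $\Gamma$ to land in a single $\cD_{g(\beta)^\perp}$, and then exploit the irreducibility and nondegeneracy of the quadric in $\Proj(\gamma^\perp_\C)$ to force $\gamma^\perp_\C = g(\beta)^\perp_\C$ and hence $\gamma=\pm g(\beta)$ by primitivity. The only cosmetic difference is that you make the Baire-category step explicit and conclude via equality of quadrics and their linear spans, whereas the paper argues that otherwise the intersection would be a hypersurface containing an irreducible quadric; these are the same argument.
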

\begin{proof}
    Let $\pi: \cD_\Lambda\to \cD_\Lambda / \Gamma$ be the canonical projection. For each vector $\beta$ that defines a Heegner divisor, the divisor $\Hh_{\beta^\perp}$ is the image via $\pi$ of the period domain $\cD_{\beta^\perp}\subset\cD_\Lambda$.
    The connected components of $\cD_{\beta^\perp}$ are $\cD_{\beta^\perp}^+ = \cD^+_\Lambda\cap \cD_{\beta^\perp}$ and $\cD_{\beta^\perp}^- = \cD^-_\Lambda\cap \cD_{\beta^\perp}$.

    Observe that
    $$
        \pi^{-1}(\Hh_{\beta^\perp})  = \bigcup_{g\in \bar\Gamma} \cD_{g(\beta)^\perp}.
    $$
    Clearly, if $\beta$ and $\gamma$ are in the same $\bar\Gamma$-orbit, they define the same Heegner divisor.\\

    Conversely, if $\Hh_{\gamma^\perp}=\Hh_{\beta^\perp}$, then $\cD_{\gamma^\perp}$ is contained in $\pi^{-1}(\Hh_{\beta^\perp})$ and in particular
    $$
                \cD_{\gamma^\perp}^+ \subset \pi^{-1}(\Hh_{\beta^\perp})\cap \cD^+_\Lambda = \bigcup_{g\in \bar\Gamma} \cD_{g(\beta)^\perp}^+.
    $$
    Since $\cD_{\gamma^\perp}^+$ is irreducible, there exists $g\in\bar\Gamma$ such that $\cD_{\gamma^\perp}^+ = \cD_{g(\beta)^\perp}^+$. As complex conjugation exchanges $\cD^+_\Lambda$ and $\cD^-_\Lambda$, we obtain $\cD_{\gamma^\perp} = \cD_{g(\beta)^\perp}$.

    We show that this implies $g(\beta)^\perp=\gamma^\perp$. If not, the closed subvariety $\Proj(g(\beta)^\perp_\C)\cap \Proj(\gamma^\perp_\C)$ is a hypersurface of $\Proj(\gamma^\perp_\C)$ that contains $\cD_{\gamma^\perp}$. Therefore, it contains its closure $\{[x]\in \Proj(\gamma^\perp_\C)\mid x^2 =0\}$, which is an irreducible quadric, hence not contained in any hypersurface.
    Since $\gamma$ and $\beta$ are primitive this implies $g(\beta)=\pm \gamma$ and therefore $\gamma$ and $\beta$ are in the same $\bar\Gamma$-orbit.
\end{proof}


For each vector $\beta\in \Lambda$ with $\beta^2\neq 0$, the reflection with respect to $\beta^\perp$ in $\Lambda_\Q$ is given by the formula
$$
\forall x\in \Lambda \qquad r_\beta(x) = x - \frac{2x\cdot\beta}{\beta^2}\beta.
$$

When $\beta$ is primitive, $r_\beta$ is in $O(\Lambda)$ if and only if
$
\beta^2\mid 2 \div(\beta).
$
\begin{definition}\label{def:reflection}
    A primitive vector $\beta\in \Lambda$ with $\beta^2<0$ \emph{defines a nontrivial reflection in $G$} if $\beta^2\mid 2\div(\beta)$, the reflection $r_\beta$ is in the group $\bar O$, and $[r_\beta]\in G$ is nontrivial.
\end{definition}

If $\beta$ defines a nontrivial reflection in $G$, the Heegner divisor $\Hh_{\beta^\perp}\subset \cD_\Lambda /\Gamma$ is contained in the fixed locus of $r_\beta$.
\subsection{The ramification divisors of $\cover: \cD_\Lambda / \Gamma \to \cD_\Lambda / O$}
The next theorem generalizes \cite[Proposition~3.8]{stellari2008}, following {\cite[Corollary~2.13]{GritsenkoRefl2007}}: in situation \eqref{eqn:galois_cover_period_dom}, we show that the divisorial components of the ramification of $\cover$ are Heegner divisors associated with nontrivial reflections in $G$.
\begin{theorem}\label{thm:divisori_ref}
    Let $\Lambda$ be an even lattice of signature $(2, n_-)$ with $n_-\ge 2$, and let $\Gamma$ and $O$ be subgroups of finite index of $O(\Lambda)$ such that $\Gamma \norm O$, and $\Gamma$ contains an isometry that exchanges the two connected components of $\cD_\Lambda$. We set $G\coloneqq \bar O /\bar \Gamma$.\\
    An irreducible divisor $D\subset \cD_\Lambda / \Gamma$ is contained in the fixed locus of a nontrivial element $g$ of $G$ if and only if it is a Heegner divisor $\Hh_{\beta^\perp}$, where $\beta$ is primitive with $\beta^2<0$, defines a nontrivial reflection in $G$ and $g = [r_\beta]$.\\
    Moreover, each irreducible divisor of $\cD_\Lambda / \Gamma$ is contained in the fixed locus of at most one nontrivial element $g\in G$.
\end{theorem}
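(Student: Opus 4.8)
The plan is to prove the characterization (the equivalence) first, after which the final ``at most one'' statement follows quickly. The direction $(\Leftarrow)$ is already recorded before the statement: if $\beta$ defines a nontrivial reflection in $G$, then $r_\beta$ is the identity on $\beta^\perp$, hence on $\cD_{\beta^\perp}$, so $\Hh_{\beta^\perp}=\Im(\cD_{\beta^\perp}\map\cD_\Lambda/\Gamma)$ lies in the fixed locus of the nontrivial element $g=[r_\beta]$. For $(\Rightarrow)$, I would start with an irreducible divisor $D\subset\cD_\Lambda/\Gamma$ contained in $\mathrm{Fix}(g)$ with $g\neq\id$, let $\pi\colon\cD_\Lambda\map\cD_\Lambda/\Gamma$ be the projection, and pick an irreducible component $\tilde D$ of the analytic hypersurface $\pi^{-1}(D)$. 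By Remark~\ref{rmk:inv_elements}, every point of $\tilde D$ is an eigenvector of some $f\in\bar O$ with $[f]=g$; the admissible representatives form the single coset $f_0\bar\Gamma$, so $\tilde D$ is covered by the countable family $\{V_\lambda(f)\cap\cD_\Lambda : f\in f_0\bar\Gamma,\ \lambda\in\mathrm{Sp}(f)\}$ of proper closed analytic subsets (proper because $[f]=g\neq\id$ forces $f\neq\pm\id$). A Baire-category argument, exactly as in the construction of the set $X$ and in the proof of Lemma~\ref{lemma:inv_div_equality}, then shows that the irreducible $\tilde D$ is contained in a single $V_\lambda(f)\cap\cD_\Lambda$.

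The heart of the argument is a dimension count on the eigenspace $W\coloneqq V_\lambda(f)$. Since $f$ is a real isometry, for an eigenvector $x\in\cD_\Lambda$ the identity $x\cdot\bar x=|\lambda|^2\,(x\cdot\bar x)$ together with $x\cdot\bar x>0$ gives $|\lambda|=1$; and if $W$ is not totally isotropic, then $v\cdot w=\lambda^2\,(v\cdot w)$ for all $v,w\in W$ forces $\lambda=\pm1$ and $W$ nondegenerate. In this case $\Proj(W)\cap\cD_\Lambda$ is a sub-period-domain of dimension $\rk W-2$, so the condition that $\tilde D$ has codimension $1$ forces $\rk W=n_-+1$, i.e. $W=\beta^\perp_\C$ for a primitive $\beta\in\Lambda$; the existence of points of $\cD_\Lambda$ in $\Proj(W)$ forces the signature of $\beta^\perp$ to be $(2,n_--1)$, hence $\beta^2<0$. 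As $f$ is then $\lambda=\pm1$ on $\beta^\perp$ and $\pm1$ on $\C\beta$, discarding the two cases giving $f=\pm\id$ (which would yield $g=\id$) leaves $f=\pm r_\beta$. Consequently $\tilde D=\cD_{\beta^\perp}$ and $D=\Hh_{\beta^\perp}$; moreover $r_\beta=\pm f\in\bar O$, so $\beta^2\mid 2\div(\beta)$ and $g=[f]=[r_\beta]\neq\id$, that is, $\beta$ defines a nontrivial reflection in $G$ with $g=[r_\beta]$, as required.

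The one case this count does not immediately settle, and which I expect to be the main obstacle, is when $W=V_\lambda(f)$ is totally isotropic (necessarily with $\lambda\neq\pm1$). Here $\Proj(W)$ lies on the quadric and $\dim(\Proj(W)\cap\cD_\Lambda)\le\rk W-1$, so codimension $1$ would require $\rk W\ge n_-$; since a totally isotropic subspace of a form of rank $n_-+2$ has dimension at most $\lfloor(n_-+2)/2\rfloor$, this is impossible once $n_-\ge3$, and the isotropic case simply does not occur. The delicate situation is $n_-=2$, where a complex eigenvalue of modulus $1$ can cut out a $1$-dimensional sub-ball-quotient inside the fixed locus; there one must verify that the image of such a locus either fails to be an irreducible codimension-one algebraic divisor of $\cD_\Lambda/\Gamma$ or, if it is, already coincides with a Heegner divisor (its preimage is not cut out by a rational hyperplane $\beta^\perp$). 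Ruling out a genuinely new branch component in this boundary case is where the real work lies.

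Finally, the ``at most one'' statement is a formal consequence of the characterization and Lemma~\ref{lemma:inv_div_equality}. If $D\subset\mathrm{Fix}(g_1)\cap\mathrm{Fix}(g_2)$ with $g_1,g_2$ nontrivial, the characterization gives $D=\Hh_{\beta^\perp}$ and $g_i=[r_\beta]$ for some primitive $\beta$ of negative square. The vector $\beta$ is determined by $D$ only up to sign and the $\bar\Gamma$-action (Lemma~\ref{lemma:inv_div_equality}), but $r_{-\beta}=r_\beta$ and, for $\phi\in\bar\Gamma$, the isometry $r_{\phi(\beta)}=\phi\,r_\beta\,\phi^{-1}$ has the same class in $G=\bar O/\bar\Gamma$ because $\bar\Gamma\norm\bar O$. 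Hence $[r_\beta]\in G$ is a well-defined invariant of $D$, so $g_1=g_2=[r_\beta]$, and $D$ lies in the fixed locus of at most one nontrivial element of $G$.
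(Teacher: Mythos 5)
Your main line of argument is exactly the paper's: the same description of the fixed locus via Remark~\ref{rmk:inv_elements}, the same countability/Baire reduction to a single eigenspace $V_\lambda(f_\C)$, the same conclusion $\lambda=\pm 1$ followed by the signature argument giving $\beta^2<0$ and the identification $f=\pm r_\beta$, and the same uniqueness argument combining Lemma~\ref{lemma:inv_div_equality} with $r_{\phi(\beta)}=\phi r_\beta\phi^{-1}$ and normality of $\bar\Gamma$ in $\bar O$. For $n_-\ge 3$ your proof is complete, and in fact more scrupulous than the paper's: the step in the paper asserting that codimension $1$ of $\pi^{-1}(D)\cap \Proj(V_\lambda(f_\C))$ ``implies the claim'' that $V_\lambda(f_\C)$ has codimension $1$ is precisely where the totally isotropic alternative is silently discarded, and your count ($\rk W\ge n_-$ needed, versus $\rk W\le\lfloor (n_-+2)/2\rfloor$ for isotropic $W$) is the justification missing there.

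The obstacle you flag at $n_-=2$ is therefore a gap shared by the paper's own proof; worse, it cannot be closed, because the statement as written is false for $n_-=2$. Take $\Lambda=U\oplus U\cong (M_2(\Z),\,2\det)$, $O=O(\Lambda)$, and $\Gamma=\ker\bigl(O(\Lambda)\to O(\Lambda/5\Lambda)\bigr)$. Then $\Gamma\norm O$ has finite index and contains a component-exchanging isometry, namely $A\mapsto \gamma^* A(\gamma^*)^{-1}$ with $\gamma^*=\begin{pmatrix}2&5\\5&12\end{pmatrix}$: one has $\det\gamma^*=-1$, so this exchanges the two components of $\cD_\Lambda$, while $\gamma^*\equiv 2\,\id\pmod 5$ makes the isometry congruent to $\id$ modulo $5$. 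Now let $f\colon A\mapsto \begin{pmatrix}0&-1\\1&0\end{pmatrix}A$; then $f\in O(\Lambda)$, $f^2=-\id$, and $f\not\equiv\pm\id\pmod 5$, so $[f]\neq\id$ in $G=\bar O/\bar\Gamma$. The eigenspace $V_i(f_\C)$ is a totally isotropic plane with $V_i\oplus\overline{V_i}=\Lambda_\C$, and in the identification $\cD_\Lambda^+\cong\mathfrak{H}\times\mathfrak{H}^-$ (with $\mathfrak{H}$ the upper half-plane) the isometry $f$ acts by $(z,w)\mapsto(-1/z,w)$, fixing pointwise the curve $\Proj(V_i(f_\C))\cap\cD_\Lambda^+=\{i\}\times\mathfrak{H}^-$. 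The image of this curve in $\cD_\Lambda/\Gamma$ is a closed irreducible divisor contained in $\mathrm{Fix}([f])$; it is algebraic, being the image of a level-$5$ modular curve under a holomorphic, hence (by Borel's extension theorem) algebraic, map. But it is not a Heegner divisor: any real vector orthogonal to $V_i$ is also orthogonal to $\overline{V_i}$, hence to $\Lambda_\C$, so no nonzero $\beta\in\Lambda$ has $V_i\subset\beta^\perp_\C$, and the Baire argument of Lemma~\ref{lemma:inv_div_equality} then shows this divisor equals no $\Hh_{\beta^\perp}$. So the theorem needs the hypothesis $n_-\ge 3$ (harmless for the paper's applications, where $n_-=20$), or an explicit exclusion of these isotropic fixed curves; your instinct that this boundary case is ``where the real work lies'' was exactly right, except that no amount of work can rule it out.
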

\begin{proof}
    Remark~\ref{rmk:inv_elements} implies that the set of points of $\cD_{\Lambda}/\Gamma$ fixed by $g\in G$ is
    $$
    \mathrm{Fix}(g) = \pi\left(\bigcup_{[f]=g} \bigsqcup_{\lambda \in \mathrm{Sp}(f_\C)} \Proj(V_\lambda(f_\C))\cap \cD_\Lambda \right),
    $$
    where $\pi: \cD_\Lambda \to \cD_\Lambda / \Gamma$ is the canonical projection and $V_\lambda(f_\C)$ is the eigenspace of $f_\C$ relative to the eigenvalue $\lambda$.\\

    Observe that if $\mathrm{Fix}(g)$ contains an irreducible divisor $D$, there exists an isometry $f\in \bar O$ with $[f]=g$ and an eigenvalue $\lambda$ of $f_\C$ such that $V_\lambda(f_\C)$ has codimension $1$.
    Indeed, $D$ has codimension 1 in $\cD_\Lambda$ and
    \begin{align*}
    D &= \pi\left(\pi^{-1}(D)\cap \bigcup_{[f]=g} \bigsqcup_{\lambda \in \mathrm{Sp}(f_\C)} \Proj(V_\lambda(f_\C))\cap \cD_\Lambda  \right) \\
    &= \bigcup_{\substack{[f]=g \\ \lambda \in \mathrm{Sp}(f_\C)} } \pi \left(\pi^{-1}(D)\cap \Proj(V_\lambda(f_\C)) \right),
    \end{align*}
    where the union is over a countable set, as $\Gamma$ is countable.  Hence at least one of the pieces $\pi(\pi^{-1}(D)\cap \Proj(V_\lambda(f_\C)))$ has codimension 1, therefore so has $\pi^{-1}(D)\cap \Proj(V_\lambda(f_\C))$ and this implies the claim.\\

    Moreover, since D is irreducible, we obtain
    $$
    D =  \pi(\Proj(V_\lambda(f_\C))\cap \cD_\Lambda).
    $$

    For each real operator, the eigenspace relative to an eigenvalue $\lambda$ has the same dimension as the eigenspace relative to $\bar\lambda$. Since $f_\C$ is a real operator and an isometry, and the codimension of $V_\lambda(f_\C)$ is 1, it follows that $\lambda = \pm 1$. Up to changing $f$ into $-f$, we can suppose $\lambda = 1$.  \\

    Since $\mathrm{codim}(V_1(f_\Q)) = \mathrm{codim}(\ker(\id - f_\Q)) =\mathrm{codim}(V_1(f_\C)) = 1$, there exists $\beta\in \Lambda$ primitive such that
    $$
    V_1(f_\Q) = \beta^\perp \text{ and } f_{\Q|_{\beta^\perp}}= \id.
    $$

    Observe moreover that $\beta^2 < 0$. Indeed if $x\in \cD_\Lambda \cap \Proj(V_1(f_\C))$, then $f_\C(\bar{x})=\bar{x}$, so $P = \mathrm{Re}(x)\R \oplus \mathrm{Im}(x)\R \subset V_1(f_\C)$. As $P$ is positive definite, it follows that $n_+(V_1(f_\C))= 2$, hence $\beta^2 < 0$.
    Hence $f$ satisfies $f\vert_{\beta^\perp}= \id$ and $f(\beta)=-\beta$, namely $f_\Q$ is the reflection with respect to $\beta$ and $[r_\beta] = [f] = g  \in G$ is nontrivial. Therefore $D$ is a Heegner divisor and $\beta$ defines a nontrivial reflection in $G$.\\

    Suppose there exists $g$ and $g'$ in $G$ such that $D\subset \mathrm{Fix}(g)\cap \mathrm{Fix}(g')$. We have proved that there exist vectors $\beta$ and $\gamma$ that define nontrivial reflections such that $D = [\Hh_{\beta^\perp}]=[\Hh_{\gamma^\perp}]$ with $g=[r_\beta]$ and $g' = [r_\gamma]$. Lemma~\ref{lemma:inv_div_equality} implies that $\gamma = g\beta$ for some $g\in \bar\Gamma$. Thus, since $r_{g\beta}=g r_\beta g^{-1}$, it follows that $g' = [r_{g\beta}]=[r_\beta]=g$.
\end{proof}

Hence, the ramification divisors of the morphism $\cover:\cD_\Lambda / \Gamma \to \cD_{\Lambda}/O$ are parametrized by the $\bar\Gamma$-orbits of vectors $\beta\in \Lambda$ that define nontrivial reflections in $\bar O /\bar \Gamma$.\\

We notice that given $g\in G$, the fixed locus $\mathrm{Fix}(g)$ may contain several divisorial components, namely we could have $g=[r_\beta]$ for several vectors $\beta$ that are not in the same $\bar \Gamma$-orbit.

\section{Groups of isometries of some lattices}\label{sec:isometries_lattices_nomality}
    \def\cm{a} 
    \def\vm{m} 
    \def\ck{b} 
    \def\cl{c} 
    We now apply the results of Section~\ref{sec:ram_divisors} to moduli spaces of polarized hyper-Kähler manifolds of K3$^{[m]}$-type.
    As in the introduction we have a cover

    \begin{equation}\label{eqn:cover_HK}
    \cover: \cD_{h^\perp}/\widehat O(\Lambda_{K3^{[m]}}, h^\perp) \map \cD_{h^\perp}/ O(h^\perp),
    \end{equation}
    where $\cD_{h^\perp}/\widehat O(\Lambda_{K3^{[m]}}, h^\perp)$ is the period space of polarized hyper-Kähler manifolds of K3$^{[m]}$-type with polarization of type $h$. In order to apply Theorem~\ref{thm:divisori_ref} to the cover \eqref{eqn:cover_HK},
    we need to study the normality of the subgroup $\widehat O(\Lambda_{K3^{[m]}}, h)$ of $O(h^\perp)$.\\



    We consider a slightly more general situation.  Given a positive integer $t$, let $L_{2t}$ be the even lattice
    \begin{equation}\label{eqn:lattice_2t}
        L_{2t} = M \oplus U \oplus \Z\ell,
    \end{equation}
    where $M$ is an even unimodular lattice and $\ell^2 = -2t$.
    The discriminant group $A_{L_{2t}}$ is a cyclic group of order $2t$ generated by $\ell_*$.  The lattice $\Lambda_{K3^{[m]}}$ is a lattice of type $L_{2(m-1)}$.\\

    Let $h$ be a primitive  vector of $L_{2t}$ of square $2d>0$ and divisibility $\gamma$. Recall from~\eqref{eqn:prel_chain_sub} the chain of subgroups
        \begin{equation}\label{eqn:inv_div_chain_sub}
            \widetilde O(h^\perp)\stackrel{i_1}{\longhookrightarrow} \widetilde O(L_{2t},h) \stackrel{i_2}{\longhookrightarrow} \widehat O(L_{2t},h) \stackrel{i_3}{\longhookrightarrow} O(L_{2t},h) \stackrel{i_4}{\longhookrightarrow} O(h^\perp),
        \end{equation}
        where we can describe the group $O(L_{2t}, h)$ as in \eqref{eqn:lattice_ext_iso}.\\



    We study the lattice $h^\perp$ and the group $A_{h^\perp}$, following \cite{gritsenko2010}. Moreover we describe the image of $\widehat O(L_{2t}, h)$ in $O(A_{h^\perp})$ in some cases. More precisely,
    \begin{itemize}
        \item{in Proposition \ref{prop:HK_struct_perp}} we describe the lattice $h^\perp$,
        \item{denoting by $\omega$ the $\gcd(\frac{2t}{\gamma}, \gamma)$,
                \begin{itemize}
                    \item we compute the discriminant group $A_{h^\perp}$ for $\omega=1$ (Proposition~\ref{prop:dec_disc_group});
                    \item we describe the image of the group $\widehat O(L_{2t}, h)$ in $O(A_{h^\perp})$ under the morphism $r: O(h^\perp)\to O(A_{h^\perp})$ introduced in Theorem~\ref{thm:Nik_surjMor} (Proposition~\ref{prop:Beri} for $t=1$ or $\gamma > 2$ and Proposition~\ref{prop:HK_restr_hat_group} for $\gamma \in \{1,2\}$ and $\omega=1$);
                    \item we discuss the normality of $\widehat O(L_{2t}, h)$ in $O(h^\perp)$ and show that, if $(t,d)=1$, then $\widehat O(L_{2t}, h)\norm O(h^\perp)$ (Corollary~\ref{cor:normality_coprime_case}).
                \end{itemize}
                }
    \end{itemize}

    \subsection{The lattice $h^\perp$}\label{subsec:hperp}
    The primitive vector $h$ in the lattice $L_{2t}$ can be written as 
    $$
        h = am + c\ell
    $$
    where $m\in M\oplus U$ is primitive and $a, c$ are coprime integers. The divisibility of $h$ is $\gamma=(a, 2tc)=(a,2t)$. In particular $\gamma\mid 2t$ and we can write $a = \gamma a_1$ for some $a_1\in \Z$. Observe moreover that, since $\gamma\mid a$, we have $(c,\gamma)=1$. Finally,
    \begin{equation}\label{eqn:HK_classDisc}
                h_* = \left[\frac{h}{\div(h)}\right] = \bar c \frac{2t}{\gamma} \ell_* \in A_{L_{2t}},
    \end{equation}
    of order $\gamma$ in $A_{L_{2t}}$. The class $\bar c\in \Z/\gamma \Z$ is uniquely determined by $h_*$.

    By computing the square of $h$, we obtain
    $
        2d = h^2= \gamma^2 a_1^2 m^2 -2tc^2,
    $
    where $m^2$ is an even integer. Thus, the quotient $\frac{d+tc^2}{\gamma^2}$ is an integer which we denote by $b$.

    Given a standard basis $(e, f)$ of $U$, we consider the vector
    \begin{equation}\label{eqn:HK_vector_form}
        \tilde h = \gamma(e + bf) + c\ell.
    \end{equation}
    Its divisibility is $(\gamma, 2tc)$, which is $\gamma$ since $\gamma\mid 2t$, its square is $2\gamma^2b -2tc^2 = 2d$, and
    $$
        \tilde h_* = \left[\frac{\tilde h}{\div(\tilde h)} \right] = \bar c\frac{ 2t}{\gamma} \ell_* = h_* \in A_{L_{2t}}.
    $$
    Since by Eichler's Lemma, the $\widetilde O(L_{2t})$-orbit of $h$ is determined by $h^2$ and $h_*$, and we are only interested in the $O(L_{2t})$-orbit of $h$, we can suppose that the vector $h$ is of the form \eqref{eqn:HK_vector_form}.\\

    Note that the element $h_*$ of $A_{L_{2t}}$ (see \eqref{eqn:HK_classDisc}) is determined by $c \pmod{\gamma}$. If $c = \gamma n + c'$, then, for $b' = b + tn^2 - \frac{2t}\gamma nc$, the vector
    $$
        h'=\gamma(e + b'f) + c'\ell
    $$
    has square
    $$2\gamma^2b' - 2t c'^2 = 2\gamma^2b + 2t\gamma^2 n^2 - 4t\gamma nc - 2t(\gamma^2n^2 - 2\gamma nc +c^2) = 2\gamma^2b -2tc^2 = 2d,$$
    and $h'_*=h_*$ in $A_{L_{2t}}$.

    So we may always assume that $0\le c < \gamma$ and $(c, \gamma) =1$.

    \begin{remark}\label{rmk:gamma_t_d} If $h$ is a primitive vector of $L_{2t}$ of divisibility $\gamma$ and square $2d$, then
    \begin{equation}\label{eqn:remark_gamma_t_d}
    \gamma^2\mid d + tc^2.
    \end{equation}
    Therefore, in general not all pairs $(2d, \gamma)$ can be realized as $(h^2, \div(h))$ for some primitive vector $h\in L_{2t}$.\\
    For instance, if $\gamma =1$ the condition~\eqref{eqn:remark_gamma_t_d} is always verified. If $\gamma =2$, then $c$ is necessarily $1$ and $d$ must verify $d + t\equiv 0\pmod{4}.$
    Hence, for $\gamma \in \{0,1\}$, the orbit of $h$ is uniquely determined by $\gamma, d$. In these cases, we denote by $^{[m]}\Mm_{2d}^{(\gamma)}$ and $^{[m]}\Pp_{2d}^{(\gamma)}$ the moduli space and the period space of hyper-Kähler manifolds of K3$^{[m]}$-type with polarisation of square $2d$ and divisibility $\gamma$.

    \end{remark}
    \begin{proposition}[{\cite[Proposition~3.6.(\rm{iv})]{gritsenko2010}}]\label{prop:HK_struct_perp}
         Let $h$ be a primitive vector of $L_{2t}$ of square $2d$ and divisibility $\gamma$, and let $c$ be the integer such that $0\le c < \gamma$ and $(c,\gamma)=1$ defined in \eqref{eqn:HK_vector_form}. Then,
        $$
            h^\perp = M\oplus \begin{pmatrix}
                                                                    - \dfrac{2d + 2c^2t}{\gamma^2} & c\dfrac{2t}{\gamma}\\[10pt]
                                                                    c\dfrac{2t}{\gamma} & -2t
                                                                    \end{pmatrix}.
        $$
        In particular, up to isometries of $L_{2t}$, we can suppose $h = \gamma(e+bf) + c\ell$.  The vectors
        $$
            h_1 = e-bf \quad\text{and}\quad h_2 =c\frac{2t}\gamma f + \ell
        $$
        form a basis of the non unimodular part of $h^\perp$.
    \end{proposition}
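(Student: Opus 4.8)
The plan is to peel off the unimodular summand $M$ first and then pin down the rank-two complement inside $U\oplus\Z\ell$ explicitly. Having reduced (via Eichler's Lemma~\ref{lemma:eichler} and~\eqref{eqn:HK_vector_form}) to $h=\gamma(e+bf)+c\ell\in U\oplus\Z\ell$, I note that $M$ is orthogonal to $U\oplus\Z\ell$, so any $v\in L_{2t}$ splits as $v=v_M+v'$ with $v_M\in M$, $v'\in U\oplus\Z\ell$, and $v\cdot h=v'\cdot h$. Hence $M\subseteq h^\perp$ and $h^\perp=M\oplus K$, where $K\coloneqq h^\perp\cap(U\oplus\Z\ell)$ is the orthogonal direct summand carrying the non-unimodular part. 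Since $h$ is primitive, $K$ has rank $2$, and the whole statement reduces to computing $K$ and checking that $h_1,h_2$ form a $\Z$-basis of it.

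Next I would confirm that the proposed generators lie in $K$ and read off the Gram matrix. Writing vectors of $U\oplus\Z\ell$ in the basis $(e,f,\ell)$ and using $e^2=f^2=0$, $e\cdot f=1$, $\ell^2=-2t$, $\ell\perp U$, a direct computation gives $h_1\cdot h=0$ and $h_2\cdot h=2tc-2tc=0$, so $h_1,h_2\in K$; this uses that $b=(d+tc^2)/\gamma^2$ and $c\tfrac{2t}{\gamma}$ are integers, which holds since $\gamma^2\mid d+tc^2$ and $\gamma\mid 2t$. The same bilinear computations yield $h_1^2=-2b=-(2d+2c^2t)/\gamma^2$, $h_2^2=-2t$, and $h_1\cdot h_2=c\tfrac{2t}{\gamma}$, which is exactly the claimed matrix. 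These are routine and I would only record the outcomes.

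The one genuine point is to show that $h_1,h_2$ generate $K$ over $\Z$, not merely a finite-index sublattice, and I expect this primitivity step to be the main obstacle. I would resolve it by exhibiting $\langle h_1,h_2\rangle$ as a saturated sublattice of $U\oplus\Z\ell$: in coordinates $h_1=(1,-b,0)$ and $h_2=(0,c\tfrac{2t}{\gamma},1)$, and the $2\times2$ minor on the $e$- and $\ell$-columns equals $1$. Equivalently, from $e=h_1+bf$ and $\ell=h_2-c\tfrac{2t}{\gamma}f$ one sees that $\{h_1,f,h_2\}$ is a $\Z$-basis of $U\oplus\Z\ell$, so $\langle h_1,h_2\rangle$ is a rank-two primitive sublattice. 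Being contained in the rank-two lattice $K$, it has the same $\Q$-span as $K$, whence $K$ lies in the saturation of $\langle h_1,h_2\rangle$, which is $\langle h_1,h_2\rangle$ itself; therefore $K=\langle h_1,h_2\rangle$. As a consistency check, the determinant of the displayed $2\times2$ block is $4td/\gamma^2$, and this matches $\disc(h^\perp)$ computed from~\eqref{eqn:prel_disc_lattices} once one notes $|H|=2d/\gamma$ (using $\gamma\mid 2d$, since $h\cdot h\in h\cdot L_{2t}=\gamma\Z$).
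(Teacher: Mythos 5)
Your proposal is correct, but there is no in-paper proof to compare it against: the paper states Proposition~\ref{prop:HK_struct_perp} as an imported result, citing \cite[Proposition~3.6(iv)]{gritsenko2010}, and gives no argument of its own. Your proof is the natural self-contained verification, and all of it checks out: the orthogonal splitting $h^\perp = M \oplus K$ with $K = h^\perp \cap (U\oplus\Z\ell)$ is valid because $h$ lies in $U\oplus\Z\ell$ and $M$ is orthogonal to it; the membership and Gram computations ($h_1^2 = -2b = -(2d+2c^2t)/\gamma^2$, $h_2^2=-2t$, $h_1\cdot h_2 = 2tc/\gamma$) are exactly right; and, most importantly, you correctly isolate and settle the one non-routine point, namely that $\Z h_1 + \Z h_2$ is all of $K$ rather than a finite-index sublattice. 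Your saturation argument is sound: since $(h_1, f, h_2)$ is obtained from $(e,f,\ell)$ by a unimodular change of basis, $\Z h_1 \oplus \Z h_2$ is a direct summand of $U\oplus\Z\ell$, hence saturated, and a saturated rank-$2$ sublattice of $K$ with the same $\Q$-span must equal $K$ (note $K$ is itself saturated, being an orthogonal complement). The discriminant cross-check, $\det = 4td/\gamma^2$ against \eqref{eqn:prel_disc_lattices} with $|H| = 2d/\gamma$, matches the computation the paper carries out later for $A_{h^\perp}$. One caveat worth recording: the reduction to the normal form $h = \gamma(e+bf)+c\ell$ rests on Eichler's Lemma~\ref{lemma:eichler}, which requires $L_{2t}$ to contain two hyperbolic planes; this is the same tacit hypothesis the paper uses in Section~\ref{subsec:hperp} (it holds for $\Lambda_{K3^{[m]}}$, where $M$ contains a copy of $U$, but not for an arbitrary even unimodular $M$, e.g.\ $M = E_8$). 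Since any isometry of $L_{2t}$ carries $h^\perp$ isometrically onto $\tilde h^\perp$, your reading of ``up to isometries'' in the statement is the intended one.
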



    \begin{remark}\label{rmk:gamma1} If $\gamma = 1$, then $h_* = 0 \in A_{L_{2t}}$. Therefore $c=0$, the lattice $B$ is diagonal, and there is an isomorphism
    $$
        h^\perp \simeq M \oplus \Z(-2d)\oplus \Z(-2t),
    $$
    where, if we take $h=e+df$, a basis for the non unimodular part of $h^\perp$ is given by $k = e-df$ and $\ell$.
    \end{remark}

    \subsection{The groups $\widetilde O(L_{2t}, h)$ and $\widehat O(L_{2t}, h)$}
    From \eqref{eqn:lattice_ext_iso}, we have
    $$
        O(L_{2t}, h) = \{g\in O(h^\perp) \tq \bar g\vert_{p(H)}= \id \},
    $$
    where $H$ is the group $L_{2t}/(\Z h \oplus h^\perp)$ and $p$ is the projection $H\inj A_{\Z h}\times A_{h^\perp} \to  A_{h^\perp}$.\\

    Given $h_1$ and $h_2$ as in Proposition~\ref{prop:HK_struct_perp}, each $v\in L_{2t} = M \oplus U \oplus \Z \ell$ can be written as ${v= m + a_1 h_1+a_2h_2+df}$, with $m\in M$ and $a_1, a_2, d\in \Z$, and such a vector $v$ is orthogonal to $h$ if and only if $d=0$. Therefore we obtain
    $$
        H = L_{2t}/(\Z h \oplus h^\perp) = \langle[f]\rangle.
    $$

    We describe the image $p(H)$, which is generated by $p([f])$.
    The vector
    \begin{equation}\label{eqn:k_1_vecor_disc_group}
        k_1 = \frac{\gamma}{2d} h - f
    \end{equation}
    is in $(h^\perp)^\vee$. Indeed, we can compute $k_1\cdot h_1 = -1$ and $k_1\cdot h_2 = k_1\cdot M = 0$. Notice, moreover, that $p([{f}]) = -\bar k_1$. Hence, the group $p(H)$ is generated by $\bar k_1 \in A_{h^\perp}$.\\

    Therefore, the groups $\widetilde O(L_{2t}, h) = \widetilde O(L_{2t})\cap O(L_{2t}, h)$ and $\widehat O(L_{2t}, h) = \widehat O(L_{2t})\cap O(L_{2t}, h)$ can be described as
    \begin{equation}\label{eqn:HK_tilde_grp}
        \widetilde O(L_{2t}, h)  = \{g\in O(h^\perp) \mid \bar g(\bar k_1) = \bar k_1 \in A_{h^\perp}\ \ \text{and} \ \ \bar g(\ell_*) = \ell_*\in A_{L_{2t}} \}
    \end{equation}
    and
    \begin{equation}\label{eqn:HK_mon_grp}
        \widehat O(L_{2t}, h)  = \{g\in O(h^\perp) \mid \bar g(\bar k_1) = \bar k_1 \in A_{h^\perp}\ \ \text{and} \ \ \bar g(\ell_*) = \pm \ell_*\in A_{L_{2t}} \}.
    \end{equation}

    \subsection{The discriminant group $A_{h^\perp}$} We study the discriminant group $A_{h^\perp}$. From Equation~\eqref{eqn:prel_disc_lattices} it follows that
    \begin{equation}\label{eqn:HK_cardinality_disc_perp}
        \disc(\Z h) \disc(h^\perp) = |H|^2 \disc(L_{2t}),
    \end{equation}
    where $\disc(\Z h) = 2d, \disc(L_{2t}) =2t$ and $\disc(h^\perp)=|A_{h^\perp}|$.\\

    Observe that the element $\bar k_1$ defined in \eqref{eqn:k_1_vecor_disc_group} has order $\frac{2d}\gamma$ in $A_{h^\perp}$. Indeed, given an integer $n\in \Z$, the vector $nk_1$ is in ${h^\perp} = h^\perp_\Q\cap L_{2t}$ if and only if $n\frac{\gamma}{2d} h\in L_{2t}$, hence if and only if $n\frac{\gamma}{2d}\in \Z$.

    We showed in Section~\ref{sec:prelim_iso} that the morphism $p$ is injective, hence we obtain
    $$
        |H| = |p(H)| = \frac{2d}\gamma.
    $$

    From Equation~\eqref{eqn:HK_cardinality_disc_perp} we get
    $$
       2d \cdot |A_{h^\perp}| = 2t \left(\frac{2d}\gamma\right)^2,
    $$
    from which we obtain that $A_{h^\perp}$ is an abelian group of cardinality $\frac{2d}\gamma\frac{2t}\gamma$.\\

    Finally, note that
    $$\omega\coloneqq\left(\frac{2t}\gamma, \frac{2d}\gamma, \gamma\right)=\left(\frac{2t}\gamma, \gamma\right).$$
    Indeed, from \eqref{eqn:HK_vector_form} we can suppose that $h = \gamma(e+bf) + c\ell$, and therefore
    \begin{equation}\label{eqn:HK_square_div}
        2d = h^2 = 2b\gamma^2 -2tc^2 =\gamma\left(2b\gamma -\frac{2t}\gamma c^2\right).
    \end{equation}
    Hence $(\frac{2t}\gamma, \gamma)\mid \frac{2d}\gamma$.
    The next result shows that, for $\omega = 1$, the structure of the discriminant group $A_{h^\perp}$ is particularly simple.
    \begin{proposition}[{\cite[Proposition~3.12]{gritsenko2010}}]\label{prop:dec_disc_group}
        Let $h\in L_{2t}$ be a primitive vector with $h^2=2d$ and $\div(h)=\gamma$.
        If $\omega = (\frac{2t}\gamma, \gamma) = 1$, there exists an isometry
        \begin{equation}\label{eqn:disc_grp_omega1}
            A_{h^\perp} \simeq \Z / \tfrac{2d}\gamma \Z \times \Z / \tfrac{2t}\gamma \Z
        \end{equation}
        such that the subgroup $p(H)< A_{h^\perp}$ corresponds to the factor $\Z/ \frac{2d}\gamma \Z$ and the intersection form on $A_{h^\perp}$ is defined by $q(1,0)=-\frac{\gamma^2}{2d}$ and $q(0,1)=-\frac{\gamma^2}{2t}$.
    \end{proposition}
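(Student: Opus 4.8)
The plan is to exploit that $M$ is unimodular, so that $A_{h^\perp}=A_B$ where $B$ is the rank-two lattice of Proposition~\ref{prop:HK_struct_perp} with basis $h_1,h_2$; it then suffices to exhibit two orthogonal cyclic generators of $A_{h^\perp}$ of the prescribed orders and squares. One generator is already available: the vector $k_1=\tfrac{\gamma}{2d}h-f$ of \eqref{eqn:k_1_vecor_disc_group}, which lies in $(h^\perp)^\vee$, generates $p(H)$, and has $\bar k_1$ of order $\tfrac{2d}{\gamma}$. A direct computation using $h^2=2d$ and $h\cdot f=\gamma$ gives $k_1^2=-\tfrac{\gamma^2}{2d}$.

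For the second generator I would take $w\coloneqq\tfrac{\gamma}{2t}h_2=cf+\tfrac{\gamma}{2t}\ell$. Since $h_2^2=-2t$ and $h_1\cdot h_2=\tfrac{2tc}{\gamma}$, one checks $w\cdot h_1=c\in\Z$ and $w\cdot h_2=-\gamma\in\Z$, so $w\in(h^\perp)^\vee$, and $w^2=\tfrac{\gamma^2}{4t^2}h_2^2=-\tfrac{\gamma^2}{2t}$. A short computation (using $w\cdot h=0$ and $w\cdot f=0$) gives $k_1\cdot w=0$, so $\bar k_1$ and $\bar w$ are orthogonal in $A_{h^\perp}$. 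Finally, since $\tfrac{2t}{\gamma}w=h_2\in B$ while $\gamma\mid 2t$ forces $nw\notin L_{2t}$ for $0<n<\tfrac{2t}{\gamma}$, the class $\bar w$ has order exactly $\tfrac{2t}{\gamma}$.

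It remains to show that $\bar k_1$ and $\bar w$ together generate $A_{h^\perp}$ with direct sum, and this is the only place where the hypothesis $\omega=1$ is used. As $|A_{h^\perp}|=\tfrac{2d}{\gamma}\cdot\tfrac{2t}{\gamma}$ equals the product of the two orders, it is enough to prove $\langle\bar k_1\rangle\cap\langle\bar w\rangle=0$. The self-pairing $b(\bar w,\bar w)=w^2=-\tfrac{\gamma^2}{2t}\pmod{\Z}$ has, in lowest terms, denominator $\tfrac{2t}{\gamma}$ precisely because $\gcd(\gamma,\tfrac{2t}{\gamma})=\omega=1$; hence $q$ restricted to $\langle\bar w\rangle$ is nondegenerate. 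Any $z\in\langle\bar k_1\rangle\cap\langle\bar w\rangle$ satisfies $b(z,\bar w)=0$ (because $z\in\langle\bar k_1\rangle$ and $\bar k_1\perp\bar w$) while lying in $\langle\bar w\rangle$, so it belongs to the radical of $q|_{\langle\bar w\rangle}$ and is therefore $0$. Thus $A_{h^\perp}=\langle\bar k_1\rangle\oplus\langle\bar w\rangle\cong\Z/\tfrac{2d}{\gamma}\Z\times\Z/\tfrac{2t}{\gamma}\Z$, with $p(H)=\langle\bar k_1\rangle$ the first factor and the quadratic form given by $q(1,0)=k_1^2=-\tfrac{\gamma^2}{2d}$ and $q(0,1)=w^2=-\tfrac{\gamma^2}{2t}$.

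The hard part is isolated in this nondegeneracy step: the entire content of the hypothesis $\omega=1$ is that it makes the restricted form on $\langle\bar w\rangle$ (equivalently on $\langle\bar k_1\rangle$) nondegenerate, which is exactly what lets $p(H)$ split off as an orthogonal direct summand. Everything else is a finite, bookkeeping-level computation in $U\oplus\Z\ell$ using the explicit generators $k_1$ and $w$; the guessing of the clean second generator $w=\tfrac{\gamma}{2t}h_2$ is what renders the orthogonality and the square $-\tfrac{\gamma^2}{2t}$ transparent rather than emerging from inverting the Gram matrix of $B$.
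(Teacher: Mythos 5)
Your proof is correct and takes essentially the same approach as the paper: your second generator $w=\tfrac{\gamma}{2t}h_2=cf+\tfrac{\gamma}{2t}\ell$ is precisely the vector $k_2$ that the paper singles out as the ``key point,'' the paper itself deferring all verification to the cited result of Gritsenko--Hulek--Sankaran. Your write-up simply supplies the details the paper omits (the squares of $k_1$ and $k_2$, their orthogonality, their orders, and the nondegeneracy argument on $\langle\bar w\rangle$ which is exactly where $\omega=1$ enters), and these computations are all correct.
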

    The key point is that, for $\omega =1$, the classes of the vectors
    $$
    k_1 = \frac{\gamma}{2d} h - f \quad \text{and}\quad k_2 = c f + \frac{\gamma}{2t} \ell
    $$
    generate the discriminant group $A_{h^\perp}$. 

    \subsection{Normality of $\widehat O(L_{2t}, h)$ in $O(h^\perp)$}
        As in \eqref{eqn:prel_chain_sub}, we consider the chain of subgroups
        \begin{equation}\label{eqn:inv_div_chain_sub_omega1}
            \widetilde O(h^\perp)\stackrel{i_1}{\longhookrightarrow} \widetilde O(L_{2t},h) \stackrel{i_2}{\longhookrightarrow} \widehat O(L_{2t},h) \stackrel{i_3}{\longhookrightarrow} O(L_{2t},h) \stackrel{i_4}{\longhookrightarrow} O(h^\perp),
        \end{equation}
        where the inclusions $i_4i_3i_2i_1$ and $i_3i_2$ define normal subgroups.

        We want to understand when $\widehat O(L_{2t}, h)$ is a normal subgroup of $ O(h^\perp)$.  A summary of the results that follow  can be found in Remark~\ref{rmk:HK_summary_normality_subgroup}.

        \begin{proposition}[{\cite[Lemma 3.5, Proposition 3.6]{BeriBarros}}]\label{prop:Beri}
        The inclusion $i_1$ is trivial, and the inclusion
        $$
            i_2: \widetilde O(L_{2t},h) \longhookrightarrow \widehat O(L_{2t},h)
        $$
        has index $1$ if $t=1$ or $\gamma>2$, index $2$ otherwise.
        \end{proposition}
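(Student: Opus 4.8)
The plan is to handle the two inclusions separately, using the descriptions \eqref{eqn:HK_tilde_grp} and \eqref{eqn:HK_mon_grp} of $\widetilde O(L_{2t},h)$ and $\widehat O(L_{2t},h)$ as subgroups of $O(h^\perp)$, together with $H=\langle[f]\rangle$ and $L_{2t}=(\Z h\oplus h^\perp)+\Z f$. For $i_1$, Proposition~\ref{prop:extend_iso_tilde} already gives $\widetilde O(h^\perp)\subseteq\widetilde O(L_{2t},h)$, so it suffices to prove the reverse inclusion, i.e.\ that every $g\in\widetilde O(L_{2t},h)$ acts trivially on $A_{h^\perp}$. I would argue as follows: let $\tilde g\in\widetilde O(L_{2t})$ be the extension of $g$, which fixes $h$ and is trivial on $A_{L_{2t}}$. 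For an arbitrary $y\in L_{2t}^\vee$, let $x=y-\tfrac{y\cdot h}{2d}h$ be its orthogonal projection onto $h^\perp_\Q$; then $x\in(h^\perp)^\vee$ (since $x\cdot w=y\cdot w\in\Z$ for all $w\in h^\perp$), and because $\tilde g$ fixes $h$ and is trivial on $A_{L_{2t}}$ one obtains $g(x)-x=\tilde g(y)-y\in L_{2t}\cap h^\perp_\Q=h^\perp$, that is $\bar g(\bar x)=\bar x$ in $A_{h^\perp}$. Thus $\bar g$ fixes pointwise the subgroup $B'\subseteq A_{h^\perp}$ generated by the classes of all such projections.

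The main obstacle is to check that $B'$ is all of $A_{h^\perp}$. I would introduce the homomorphism $\bar\phi\colon A_{h^\perp}\to\Q/\Z$, $\bar x\mapsto(x\cdot f)\bmod\Z$, which is well defined because $f\in L_{2t}$. Its kernel is $B=\{\bar x: x\cdot f\in\Z\}$, and $B\subseteq B'$ since any such $x$ already lies in $L_{2t}^\vee\cap h^\perp_\Q$ and is its own projection. On one hand, using $x\cdot f=-x\cdot k_1$ and the fact that $\bar k_1$ has order $\tfrac{2d}\gamma$ in the nondegenerate discriminant form, the image of $\bar\phi$ equals $\tfrac\gamma{2d}\Z/\Z$. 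On the other hand, for $x$ the projection of $y\in L_{2t}^\vee$ one computes $\bar\phi(\bar x)=-\tfrac{(y\cdot h)\gamma}{2d}\bmod\Z$, and $y\cdot h$ runs through all of $\Z$ because $h$ is primitive; hence $\bar\phi(B')$ is already the full image of $\bar\phi$. Combined with $B=\ker\bar\phi\subseteq B'$, this forces $B'=A_{h^\perp}$, so $\bar g=\id$ and $g\in\widetilde O(h^\perp)$. It is worth noting that this argument works for every divisibility $\gamma$, with no restriction on $\omega$.

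For $i_2$, since $\widehat O(L_{2t})/\widetilde O(L_{2t})$ has order at most $2$, the index of $i_2$ is $1$ or $2$, and it is $2$ exactly when some $\tilde g\in O(L_{2t},h)$ acts as $-\id$ on $A_{L_{2t}}\cong\Z/2t\Z$ with $-\id\neq\id$. If $t=1$, then $-\id=\id$ on $\Z/2\Z$ and the index is $1$. If $\gamma>2$, any $\tilde g\in O(L_{2t},h)$ fixes $h_*=[h/\gamma]$, which has order $\gamma$; were $\tilde g$ to act as $-\id$ on $A_{L_{2t}}$ we would get $2h_*=0$, contradicting $\gamma>2$, so again the index is $1$. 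Finally, if $t>1$ and $\gamma\in\{1,2\}$, I would exhibit the missing element explicitly: here $2h_*=0$, so $-h$ has the same square and the same discriminant class as $h$, and Eichler's Lemma~\ref{lemma:eichler} (applicable since $L_{2t}$ contains $U\oplus U$) provides $r\in\widetilde O(L_{2t})$ with $r(h)=-h$. Then $\tilde g\coloneqq(-\id_{L_{2t}})\circ r$ fixes $h$ and acts as $-\id$ on $A_{L_{2t}}$, hence lies in $\widehat O(L_{2t},h)\setminus\widetilde O(L_{2t},h)$, giving index $2$. The delicate point is this existence statement, which relies on Eichler's Lemma and on $\gamma\le2$ to put $-h$ in the $\widetilde O(L_{2t})$-orbit of $h$.
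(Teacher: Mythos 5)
Your proposal is correct, but there is nothing in the paper to compare it against: the paper does not prove this proposition at all, it imports it wholesale from \cite[Lemma~3.5, Proposition~3.6]{BeriBarros}. Your argument is a sound, self-contained replacement, and every step checks out. For $i_1$, combining Proposition~\ref{prop:extend_iso_tilde} with the projection argument is exactly the right mechanism: the class of the projection $x=y-\tfrac{y\cdot h}{2d}h$ of any $y\in L_{2t}^\vee$ is indeed fixed by $\bar g$, the map $\bar\phi(\bar x)=x\cdot f \bmod \Z$ is well defined with $\ker\bar\phi\subseteq B'$ (using $L_{2t}=(\Z h\oplus h^\perp)+\Z f$), the identity $x\cdot f=-x\cdot k_1$ together with nondegeneracy of the discriminant form and the order $\tfrac{2d}{\gamma}$ of $\bar k_1$ gives $\Im\bar\phi=\tfrac{\gamma}{2d}\Z/\Z$, and primitivity of $h$ makes $\bar\phi(B')$ equal to that image, forcing $B'=A_{h^\perp}$ and hence $\bar g=\id$. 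For $i_2$, the three cases are handled correctly: $-\id=\id$ on $\Z/2\Z$ when $t=1$; the order-$\gamma$ class $h_*$, fixed by any element of $O(L_{2t},h)$, obstructs $\bar g=-\id$ when $\gamma>2$; and $(-\id_{L_{2t}})\circ r$, with $r\in\widetilde O(L_{2t})$ sending $h$ to $-h$, exhibits the index-$2$ coset when $t>1$ and $\gamma\in\{1,2\}$. The only caveat is the hypothesis behind your appeal to Eichler's Lemma~\ref{lemma:eichler}: $L_{2t}=M\oplus U\oplus\Z\ell$ contains two hyperbolic planes only when $M$ itself contains one (true when $M$ is indefinite, in particular in the K3$^{[m]}$ case $M=U^2\oplus E_8(-1)^2$, but false for $M$ definite or of rank zero). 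Since the paper itself invokes Eichler's Lemma for $L_{2t}$ in Section~\ref{subsec:hperp} under the same implicit assumption, your proof is consistent with the paper's setting, but a fully general version of the statement should record this as a hypothesis.
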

     Hence, if $t=1$ or $\gamma>2$, the group $\widehat O(L_{2t}, h)$ is equal to $\widetilde O(h^\perp)$, and thus it is a normal subgroup of $O(h^\perp)$.

    \begin{proposition}\label{prop:HK_restr_hat_group}
        If $\omega=1$ and $\gamma$ is $1$ or $2$, one has
        $$\widehat O(L_{2t}, h) = r^{-1}\left(\{\id, s\}\right),
        $$
        where the morphism $O(h^\perp)\stackrel{r}{\to} O(A_{h^\perp})$ was defined in Theorem~\ref{thm:Nik_surjMor}, and $s$ is the element of $O(A_{h^\perp})$ acting as $\begin{pmatrix} 1& 0 \\ 0 & -1\end{pmatrix}$ in the decomposition \eqref{eqn:disc_grp_omega1}.
    \end{proposition}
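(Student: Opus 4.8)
The plan is to present both $\widehat O(L_{2t},h)$ and $r^{-1}(\{\id,s\})$ as subgroups of $O(h^\perp)$ cut out by explicit conditions on the two cyclic factors of the decomposition~\eqref{eqn:disc_grp_omega1}, and then to match them. Since $s$ fixes the factor $\langle\bar{k}_1\rangle=p(H)$ pointwise and acts by $-\id$ on $\langle\bar{k}_2\rangle$, and since $\bar{k}_1,\bar{k}_2$ generate $A_{h^\perp}$, the automorphisms $\id$ and $s$ are precisely the two isometries fixing $\bar{k}_1$ and sending $\bar{k}_2$ to $\pm\bar{k}_2$; hence
$$r^{-1}(\{\id,s\})=\{g\in O(h^\perp)\mid \bar g(\bar{k}_1)=\bar{k}_1 \text{ and } \bar g(\bar{k}_2)=\pm\bar{k}_2\}.$$
On the other hand, \eqref{eqn:HK_mon_grp} describes $\widehat O(L_{2t},h)$ by the conditions $\bar g(\bar{k}_1)=\bar{k}_1$ in $A_{h^\perp}$ and $\tilde g(\ell_*)=\pm\ell_*$ in $A_{L_{2t}}$, where $\tilde g\in O(L_{2t},h)$ is the extension of $g$ furnished by Proposition~\ref{prop:extend_iso} once the first condition holds. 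Thus the statement reduces to proving that, for every $g$ with $\bar g(\bar{k}_1)=\bar{k}_1$ and every sign $\epsilon=\pm1$,
$$\tilde g(\ell_*)=\epsilon\,\ell_* \quad\Longleftrightarrow\quad \bar g(\bar{k}_2)=\epsilon\,\bar{k}_2.$$

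To prove this equivalence I would first locate $\ell_*$ inside the glued lattice. Writing $a=[h/2d]$ for the generator of $A_{\Z h}$, the glue group is $H=\langle[f]\rangle=\langle(\gamma a,-\bar{k}_1)\rangle$ (recall $p([f])=-\bar k_1$), and $L_{2t}$ is recovered as the preimage of $H$ under the reduction $(\Z h)^\vee\oplus(h^\perp)^\vee\to A_{\Z h}\oplus A_{h^\perp}$. Projecting $\ell/(2t)\in L_{2t}^\vee$ orthogonally onto $\Q h$ and $h^\perp_\Q$ and using $\ell\cdot h=-2tc$ together with the vectors $k_1$ of \eqref{eqn:k_1_vecor_disc_group} and $k_2$ of Proposition~\ref{prop:dec_disc_group}, one finds
$$\frac{\ell}{2t}=-\frac{c}{2d}\,h+k_3,\qquad k_3=\tfrac1\gamma(c\,k_1+k_2)\in(h^\perp)^\vee,\qquad \gamma\,\bar{k}_3=c\,\bar{k}_1+\bar{k}_2.$$
Since $\tilde g$ fixes $h$ and restricts to $g$ on $h^\perp_\Q$, for each sign $\epsilon$ the vector $\tilde g(\ell/2t)-\epsilon\,\ell/2t$ has $\Q h$-component $-\tfrac{c}{2d}(1-\epsilon)h$ and $h^\perp_\Q$-component $g(k_3)-\epsilon\,k_3$. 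I then test membership in $L_{2t}$ using the preimage-of-$H$ description, together with the elementary fact that a vector of $h^\perp_\Q$ lies in $L_{2t}$ if and only if it lies in $h^\perp$.

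When $\gamma=1$ we have $c=0$ and $k_3=k_2$, so the $\Q h$-component vanishes for both signs and $\tilde g(\ell_*)=\epsilon\ell_*\iff g(k_2)-\epsilon k_2\in h^\perp\iff\bar g(\bar{k}_2)=\epsilon\bar{k}_2$, as desired. When $\gamma=2$ we have $c=1$ and $2\bar{k}_3=\bar{k}_1+\bar{k}_2$. For $\epsilon=+1$ the $\Q h$-component again vanishes, giving $\tilde g(\ell_*)=\ell_*\iff\bar g(\bar{k}_3)=\bar{k}_3$; multiplying by $2$ and using $\bar g(\bar{k}_1)=\bar{k}_1$ yields $\bar g(\bar{k}_2)-\bar{k}_2=2(\bar g(\bar{k}_3)-\bar{k}_3)$. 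For $\epsilon=-1$ the surviving $\Q h$-component is $-\tfrac1d h$, whose class is $-2a$; the glue criterion then forces $\bar g(\bar{k}_3)+\bar{k}_3=\bar{k}_1$, and multiplying by $2$ gives $\bar g(\bar{k}_2)+\bar{k}_2=0$.

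The main obstacle, and the place where the hypotheses $\gamma\le2$ and $\omega=1$ are used, is the reversibility of these ``multiplication by $2$'' steps in the case $\gamma=2$: a priori the relation $2\bar{k}_3=\bar{k}_1+\bar{k}_2$ controls $\bar g(\bar{k}_3)\mp\bar{k}_3$ only after doubling. The resolution is that $A_{h^\perp}$ has odd order here. Indeed $\gamma=2$ and $\omega=(\tfrac{2t}\gamma,\gamma)=(t,2)=1$ force $t$ odd, and then Remark~\ref{rmk:gamma_t_d} (the congruence $d+t\equiv0\pmod{4}$) forces $d$ odd as well, so $A_{h^\perp}\cong\Z/d\Z\times\Z/t\Z$ has odd order by \eqref{eqn:disc_grp_omega1}. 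Hence multiplication by $2$ is a bijection of $A_{h^\perp}$, each implication above reverses, and the two equivalences hold for both signs. This establishes $\widehat O(L_{2t},h)=r^{-1}(\{\id,s\})$.
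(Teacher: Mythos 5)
Your proof is correct, but it follows a genuinely different route from the paper's. The paper argues asymmetrically: it invokes Proposition~\ref{prop:Beri} to conclude that every $g\in\widetilde O(L_{2t},h)$ has trivial image in $O(A_{h^\perp})$, exhibits the explicit reflection $r_y$ with $y=ctf+\ell$ to realize $s$ inside $r(\widehat O(L_{2t},h))$, and then shows by direct manipulation of lattice vectors modulo $h^\perp$ (writing $g(\tfrac{\ell}{2t})=-\tfrac{\ell}{2t}+m$ and deducing $g(f)\equiv f$, $g(e)\equiv bf$, $m\equiv -f\pmod{h^\perp}$) that any $g\in\widehat O(L_{2t},h)\setminus\widetilde O(L_{2t},h)$ satisfies $\bar g(\bar k_2)=-\bar k_2$; the reverse inclusion $r^{-1}(\{\id,s\})\subseteq\widehat O(L_{2t},h)$ is then a consequence of the group structure. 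You instead prove the symmetric equivalence $\tilde g(\ell_*)=\epsilon\ell_*\iff\bar g(\bar k_2)=\epsilon\bar k_2$ for each sign $\epsilon$, working with the overlattice (glue) description of $L_{2t}$ inside $(\Z h)^\vee\oplus(h^\perp)^\vee$ and the vector $k_3$ with $\gamma k_3=ck_1+k_2$; this yields both inclusions at once, needs neither Proposition~\ref{prop:Beri} nor the reflection $r_y$, and pinpoints where the hypotheses enter: for $\gamma=2$ your multiplication-by-$2$ steps reverse precisely because $\omega=1$ together with $4\mid d+t$ forces $A_{h^\perp}\simeq\Z/d\Z\times\Z/t\Z$ to have odd order. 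I checked the key computations: $k_3$ is the orthogonal projection of $\ell/2t$ onto $h^\perp_\Q$, the class of $f$ in $A_{\Z h}\times A_{h^\perp}$ is indeed $(\gamma a,-\bar k_1)$, and for $\gamma=2$, $\epsilon=-1$ the glue criterion does reduce to $\bar g(\bar k_3)+\bar k_3=\bar k_1$ because $\bar k_1$ has order $d$. As for what each approach buys: yours is self-contained and makes the role of $\omega=1$, $\gamma\le 2$ transparent; the paper's, by dividing honest lattice vectors rather than discriminant classes by $2t$ (as in $2t(m+f)\in h^\perp\Rightarrow m+f\in h^\perp$), never confronts $2$-torsion, and its reflection $r_y$ additionally shows that $s$ is actually attained, so that $r(\widehat O(L_{2t},h))$ equals $\{\id,s\}$ exactly, which is the form of the statement used in the subsequent normality discussion.
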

    \begin{proof}

        We show that, if $\gamma$ is either $1$ or $2$, the isometry $s\in O(A_{h^\perp})$ is in the image $r(\widehat O(L_{2t}, h))$. The integer $c$, being prime to $\gamma$ and determined modulo $\gamma$, is $0$ for $\gamma = 1$ and $1$ for $\gamma = 2$. In this cases, the vector $y = ctf + \ell$ defines a reflection $r_y$ on $L_{2t}$ such that $r_y(\ell_*)=-\ell_*\in A_{L_{2t}}$. Indeed, the vector $y$ has square $-2t$ and divisibility equal to $(ct, 2t)$. Hence, it defines a reflection on $L_{2t}$, because $-2t\mid 2(ct, 2t)$, and $$
            r_y\left(\frac{\ell}{2t}\right)=\frac{\ell}{2t} - \frac{2}{2t}\frac{y\cdot \ell}{y^2} y = \frac{\ell}{2t} - \frac{2}{2t}(ctf+l) \equiv -\frac{\ell}{2t} \pmod{L_{2t}}.
        $$

        Proposition~\ref{prop:Beri} shows that if $g\in \widetilde O(L_{2t}, h)$, then $\bar{g}\coloneqq r(g)=\id$.
        Let $g$ be an isometry of $\widehat O(L_{2t}, h)\setminus \widetilde O(L_{2t}, h)$, namely $g$ satisfies $\bar{g}(\bar k_1) = \bar k_1$ and $g(\frac{\ell}{2t})=-\frac{\ell}{2t}+m$ for some $m\in L_{2t}$ (see \eqref{eqn:HK_mon_grp}). We show that $\bar g(\bar k_2) = -\bar k_2$; that implies $\bar g=s$.\\

        For $\gamma=1$, the integer $c$ is $0$, and the vector $\frac{\ell}{2t}$ is orthogonal to $h= e+bf$. Hence, since $g$ is an isometry of $L_{2t}$ that fixes $h$, the vector $g(\frac{\ell}{2t})$ is orthogonal to $g(h)=h$, and that implies $m\in h^\perp$. Moreover, in this case, $k_2 = \frac{\ell}{2t}$, thus we have $\bar g(\bar k_2) = -\bar k_2$.\\

        For $\gamma=2$, the integer $c$ is $1$, and we have $h= 2(e+bf)+\ell$. From $g(k_1)\equiv k_1 \pmod{h^\perp}$, we obtain
        $$
           \frac{\gamma}{2d} h - f = k_1 \equiv g(k_1) =  g\left(\frac{\gamma}{2d} h - g(f)\right) = \frac{\gamma}{2d} h - g(f) \pmod{h^\perp},
        $$
        and therefore $g(f)\equiv f\pmod{h^\perp}$. From $g(h)=h$ follows
        $$
             2(e+bf) +\ell = h = g(h) \equiv  2(g(e)+ bf) - \ell +2tm \pmod{h^\perp},
        $$
        which implies $2tm\equiv 2e + 2\ell - 2g(e) \pmod{h^\perp}$. Observe that the vector $h_1 = e-bf$ is in the lattice $h^\perp$, therefore $g(h_1)$ is in $h^\perp$ too, and hence we have $g(e)\equiv bg(f)\equiv bf\pmod{h^\perp}$. Thus, we obtain
        $$2tm\equiv2e + 2\ell - 2g(e) \equiv 2e + 2\ell - 2bf\equiv  2\ell\equiv 2(\ell+tf)-2tf \equiv -2tf \pmod{h^\perp},$$
        where we used that $y= \ell+tf$ is orthogonal to $h$. Therefore, the vector $m+f$ is an integral vector that belong to the lattice $h^\perp$, thus we have $m\equiv -f\pmod{h^\perp}$.

        Finally, by computing the image of $k_2 = f +\frac{\ell}{t}$, we have
        $$
            g(k_2) \equiv f -\frac{\ell}{t} + 2m \equiv f -\frac{\ell}{t} - 2f \equiv -k_2 \pmod{h^\perp}.
        $$
        As explained above, this proves $\bar g=s$.
    \end{proof}

    Theorem~\ref{thm:Nik_surjMor} implies that, if the unimodular part $M$ of $h^\perp$ has rank at least 2 (which is the case for $L_{2t}=\Lambda_{K3^{[t+1]}}$), the morphism $O(h^\perp)\stackrel{r}{\to} O(A_{h^\perp})$ is surjective. In particular, in this case and under the hypotheses of the previous proposition, the group $\widehat O(L_{2t}, h)$ is normal in $O(h^\perp)$ if and only if the group

    $$
        K = \{\id, s\}
    $$
    is a normal subgroup of $O(A_{h^\perp})$.


    \begin{example} The group $K$ is not always a normal subgroup of $O(A_{h^\perp})$. For example, for $t=9$, $\gamma = 2$ and $d=15$, the group $A_{h^\perp}$ is of the form
    $$
        A_{h^\perp} = \Z/15\Z \times \Z/9\Z
    $$
    with quadratic form defined by $q(1,0) \equiv -\frac2{15} \pmod{2\Z}$ and $q(0,1)\equiv-\frac{2}{9}\pmod{2\Z}$ (see Proposition~\ref{prop:dec_disc_group}).\\ The morphism $g$ defined by the matrix $\begin{pmatrix} 1& 10 \\ 6 & 2\end{pmatrix}$ is an isometry of $A_{h^\perp}$: indeed it is an involution and for each $(x,y)\in A_{h^\perp}$, we can compute
    $$
        q(g(x,y)) = q((x + 10y, 6x + 2y)) \equiv -\frac{2}{15} x^2 -\frac{2}{9} y^2= q(x,y) \pmod{2\Z}.
    $$
    However,
    $$
     \begin{pmatrix} 1& 10 \\ 6 & 2 \end{pmatrix}^{-1}\begin{pmatrix} 1& 0 \\ 0 & -1 \end{pmatrix}\begin{pmatrix} 1& 10 \\ 6 & 2\end{pmatrix} = \begin{pmatrix} 1& 5 \\ 3 & 2\end{pmatrix}.$$
     Therefore, in this case, $K$ is not a normal subgroup of $O(A_{h^\perp})$.
    \end{example}

    \begin{lemma}\label{lemma:norm_group}
    Let $A$ be the group
    $$
        A = \Z/\tfrac{2d}{\gamma}\Z \times \Z/\tfrac{2t}{\gamma}\Z.
    $$
    If $t$ and $d$ are coprime integers and $\gamma$ is either $1$ or $2$, then $K = \{\id, s\}$ is a normal subgroup of $\mathrm{Aut}(A)$.
    \end{lemma}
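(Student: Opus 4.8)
The plan is to reduce the statement to the assertion that $s$ lies in the \emph{center} of $\mathrm{Aut}(A)$. Since $s$ is an involution different from the identity (when it is trivial there is nothing to prove), any conjugate $\phi s \phi^{-1}$ is again an involution $\neq \id$, so it lies in $K = \{\id, s\}$ if and only if it equals $s$; thus $K \norm \mathrm{Aut}(A)$ is equivalent to $s$ being central. To check centrality I would pass to the primary decomposition $A = \bigoplus_\ell A_\ell$. Each $\ell$-primary component $A_\ell$ is characteristic, so $\mathrm{Aut}(A) = \prod_\ell \mathrm{Aut}(A_\ell)$ and $s$ decomposes as $(s|_{A_\ell})_\ell$; hence it suffices to prove that each restriction $s|_{A_\ell}$ is central in $\mathrm{Aut}(A_\ell)$.

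Next I would use the coprimality of $d$ and $t$ together with $\gamma \in \{1,2\}$ to pin down the shape of each $A_\ell$. Writing $p = \tfrac{2d}{\gamma}$ and $q = \tfrac{2t}{\gamma}$, one has $A_\ell = \Z/\ell^{v_\ell(p)}\Z \times \Z/\ell^{v_\ell(q)}\Z$, where $v_\ell$ is the $\ell$-adic valuation. For every odd $\ell$ the valuations are $v_\ell(d)$ and $v_\ell(t)$, and $\gcd(d,t)=1$ forces one of them to vanish, so $A_\ell$ is cyclic; the same holds for $\ell = 2$ when $\gamma = 2$, since then $(v_2(p), v_2(q)) = (v_2(d), v_2(t))$. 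On a cyclic group $\mathrm{Aut}$ is abelian, so $s|_{A_\ell}$ is automatically central. The only remaining case is $\ell = 2$ with $\gamma = 1$: then $(v_2(p), v_2(q)) = (1 + v_2(d), 1 + v_2(t))$ and $\gcd(d,t)=1$ forces $\min(v_2(d), v_2(t)) = 0$, so $A_2 \cong \Z/2\Z \times \Z/2^n\Z$ for some $n \ge 1$. Moreover, as $s = \begin{pmatrix} 1 & 0 \\ 0 & -1\end{pmatrix}$ acts as $-\id$ only on the $q$-factor and $-\id = \id$ on a $\Z/2\Z$, the restriction $s|_{A_2}$ is nontrivial precisely when the cyclic factor $\Z/2^n\Z$ with $n\ge 2$ is the one on which $s$ acts by $-1$; otherwise $s|_{A_2} = \id$ and there is nothing to check.

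The main obstacle, and the only genuine computation, is this last case $A_2 \cong \Z/2\Z \times \Z/2^n\Z$ with $n \ge 2$ and $s|_{A_2} = \mathrm{diag}(1, -1)$ nontrivial. Here $A_2$ is not cyclic, so $\mathrm{Aut}(A_2)$ is nonabelian and centrality must be verified by hand. I would fix generators $x$ of order $2$ and $y$ of order $2^n$. The key observation is that any endomorphism must send $x$ into the $2$-torsion subgroup $A_2[2] = \langle x, 2^{n-1}y\rangle$, and reduction modulo $2A_2$ shows that an automorphism $\phi$ has the form $\phi(x) = x + b\,2^{n-1}y$, $\phi(y) = cx + dy$ with $b, c \in \{0,1\}$ and $d$ odd. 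Since $-\id$ fixes every $2$-torsion element, $s|_{A_2}$ fixes $x$ and $2^{n-1}y$ and sends $y \mapsto -y$; a direct check then gives $\phi\, s|_{A_2} = s|_{A_2}\,\phi$ on both generators, using $-cx = cx$ and $-2^{n-1}y = 2^{n-1}y$. This shows $s|_{A_2}$ is central, completing the reduction and hence the proof that $K$ is normal.
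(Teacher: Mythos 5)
Your proof is correct, and it takes a genuinely different route from the paper's. The paper argues by cases on $\gamma$: for $\gamma=2$, coprimality gives $A\simeq \Z/td\Z$ by the Chinese remainder theorem, so $\Aut(A)$ is abelian and normality is immediate; for $\gamma=1$, it works globally on $\Z/2d\Z\times\Z/2t\Z$, writing an arbitrary automorphism as a $2\times 2$ matrix, using the fact that the image of a generator of the first factor must again have order $2d$ (together with $\gcd(t,d)=1$) to force the off-diagonal entries to be multiples of $t$ and $d$ respectively, and then checking $sg=gs$ directly, since a multiple of $t$ is its own negative modulo $2t$ and a multiple of $d$ is its own negative modulo $2d$. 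You instead make the reduction from normality of $K$ to centrality of $s$ explicit (the paper in effect proves centrality too, ending with $g^{-1}sg=s$), and then localize: the primary decomposition $\Aut(A)=\prod_\ell \Aut(A_\ell)$ reduces the question to the components $A_\ell$; coprimality of $t$ and $d$ makes every $A_\ell$ cyclic --- hence with abelian automorphism group --- except possibly $A_2$ when $\gamma=1$; and the single remaining case $A_2\cong \Z/2\Z\times\Z/2^n\Z$ with $s$ acting by $-1$ on the $\Z/2^n\Z$ factor is settled by your computation on generators, which is valid (the constraint $\phi(x)=x+b\,2^{n-1}y$ indeed follows from $2$-torsion plus reduction modulo $2A_2$, and the commutation check is right). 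Each approach has its merits: the paper's is shorter and self-contained, needing no structure theory of automorphism groups of finite abelian groups; yours isolates exactly where the difficulty sits (the prime $2$, and only when $\gamma=1$) and where coprimality enters (to make all other primary parts cyclic), and since your $2$-adic computation never uses $d$ or $t$, it actually establishes the more general fact that $s$ is central in $\Aut(\Z/p\Z\times\Z/q\Z)$ whenever every primary component is cyclic except possibly a $2$-component of the form $\Z/2\Z\times\Z/2^n\Z$.
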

    \begin{proof}
    If $\gamma=2$, then $A = \Z/d\Z \times \Z/t\Z$ with $(t,d)=1$. Hence $A\simeq \Z/td\Z$ and from \cite[Lemma~3.6.1]{Scat87}, we see that $\Aut(A)$ is abelian, and in particular it follows $K\norm \Aut(A)$.\\
    We now consider the case $\gamma = 1$, hence $A = \Z/2d\Z \times \Z/2t\Z$.
    Let $g = \begin{pmatrix} a& e \\ b & f\end{pmatrix}$ be an automorphism of $A$. The order of $(a,b)$ in $A$ is equal to the order $2d$ of $(1,0)$ in $A$, hence we obtain
    $$
        \lcm\left(\frac{2d}{(a,2d)}, \frac{2t}{(b,2t)}\right)=\ord{a,b}{A}=2d,
    $$
    hence $2t\mid 2d(b, 2t)$.
    Since $t$ and $d$ are coprime, it follows that $t\mid b$ and we can write $b = tb'$. Analogously, we can write $e = de'$. Therefore, we can compute
    $$
        gs = \begin{pmatrix} a& de' \\  tb'& f\end{pmatrix}\begin{pmatrix} 1& 0 \\ 0 & -1\end{pmatrix} = \begin{pmatrix} a&  -de'\\ tb' & -f\end{pmatrix}$$
        and
    $$sg = \begin{pmatrix} 1& 0 \\ 0 & -1\end{pmatrix}\begin{pmatrix} a&  de'\\ tb' & f\end{pmatrix} = \begin{pmatrix} a&  de'\\ -tb' & -f\end{pmatrix}.
    $$
    Since $tb'\equiv -tb'\pmod{2t}$ and $de'\equiv -de'\pmod{2d}$, we obtain $sg=gs$ and hence $g^{-1}sg = s$.

    In both cases, we proved $K\norm\mathrm{Aut}(A)$.
     \end{proof}

    Since $O(A_{h^\perp})$ is a subgroup of $\mathrm{Aut}(A_{h^\perp})$ that contains $K$, we obtain the following corollary.
    \begin{corollary}\label{cor:normality_coprime_case}
    Let $h\in L_{2t}$ be a primitive vector of square $2d$ such that $(t,d)=1$. The group $\widehat O(L_{2t}, h)$ is a normal subgroup of $O(h^\perp)$.
    \end{corollary}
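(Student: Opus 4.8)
The plan is to prove the corollary purely by assembling the structural results already established, organized as a case analysis on the divisibility $\gamma = \div(h)$ and the parameter $t$. Since $h$ is primitive, $\gamma$ divides $2t$, so in particular $\gamma \ge 1$, and when $t = 1$ one automatically has $\gamma \in \{1,2\}$. I would first dispose of the two cases $t = 1$ and $\gamma > 2$ together: in both, Proposition~\ref{prop:Beri} tells us that $i_1$ and $i_2$ in the chain \eqref{eqn:inv_div_chain_sub_omega1} are equalities, so $\widehat O(L_{2t}, h) = \widetilde O(h^\perp)$. The stable orthogonal group is normal in the full orthogonal group (as recorded in Section~\ref{sec:prel_latt}), so normality is immediate here, independently of the coprimality hypothesis. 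This leaves only the situation $t \ge 2$ together with $\gamma \in \{1,2\}$, where the coprimality $(t,d)=1$ enters.

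For that remaining case I would first verify that $\omega = (\tfrac{2t}{\gamma}, \gamma) = 1$, which is exactly what permits the use of Propositions~\ref{prop:dec_disc_group} and \ref{prop:HK_restr_hat_group}. For $\gamma = 1$ this is clear, since $\omega = (2t, 1) = 1$. For $\gamma = 2$, Remark~\ref{rmk:gamma_t_d} forces $d + t \equiv 0 \pmod 4$; combined with $(t,d) = 1$ this excludes $t$ even (otherwise $d$ would be odd and $d + t$ odd, contradicting divisibility by $4$), so $t$ is odd and $\omega = (t, 2) = 1$. With $\omega = 1$ established, Proposition~\ref{prop:HK_restr_hat_group} identifies $\widehat O(L_{2t}, h)$ with the preimage $r^{-1}(K)$ of $K = \{\id, s\}$ under the discriminant representation $r : O(h^\perp) \to O(A_{h^\perp})$ of Theorem~\ref{thm:Nik_surjMor}.

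Next I would invoke surjectivity of $r$. By Proposition~\ref{prop:dec_disc_group} the group $A_{h^\perp}$ has at most two generators when $\omega = 1$, so $\ell(h^\perp) + 2 \le \rk(h^\perp)$ (using $\rk(h^\perp) = \rk(M) + 2$ with the unimodular part of rank at least $2$, as in the discussion preceding the corollary), and $h^\perp$ is indefinite of signature $(2, n_- - 1)$; hence Theorem~\ref{thm:Nik_surjMor} applies and $r$ is surjective. For a surjective homomorphism, the correspondence theorem gives that the preimage $r^{-1}(K)$ is normal in $O(h^\perp)$ if and only if $K$ is normal in $O(A_{h^\perp})$. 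Finally, Lemma~\ref{lemma:norm_group} yields $K \norm \mathrm{Aut}(A_{h^\perp})$, and since $O(A_{h^\perp})$ is a subgroup of $\mathrm{Aut}(A_{h^\perp})$ containing $K$, we obtain $K \norm O(A_{h^\perp})$. Pulling this back through the surjection $r$ gives $\widehat O(L_{2t}, h) \norm O(h^\perp)$, completing this case and hence the corollary.

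I do not expect a single deep obstacle, since the corollary is essentially a synthesis of Propositions~\ref{prop:Beri} and \ref{prop:HK_restr_hat_group} with Lemma~\ref{lemma:norm_group}. The only genuine verifications are the parity argument that forces $\omega = 1$ in the subcase $\gamma = 2$ (ensuring the discriminant-group computations apply at all) and the elementary bookkeeping that normality is preserved under preimage and image along the surjection $r$. The care needed is mainly in making the case split exhaustive, so that the cases $t = 1$ and $\gamma > 2$ — which fall outside the hypotheses of Proposition~\ref{prop:HK_restr_hat_group} — are handled separately via the equality $\widehat O(L_{2t}, h) = \widetilde O(h^\perp)$.
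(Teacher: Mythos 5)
Your proposal is correct, and its core is the same as the paper's proof: establish $\gamma\in\{1,2\}$ and $\omega=1$ from $(t,d)=1$, use Proposition~\ref{prop:HK_restr_hat_group} to write $\widehat O(L_{2t},h)=r^{-1}(K)$ with $K=\{\id,s\}$, apply Lemma~\ref{lemma:norm_group} to get $K\norm\Aut(A_{h^\perp})$ (hence $K\norm O(A_{h^\perp})$, since $O(A_{h^\perp})$ is a subgroup containing $K$), and pull normality back along $r$.

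Two deviations from the paper are worth flagging. First, your case split is unnecessary: under $(t,d)=1$ the divisibility $\gamma$ divides $(2t,2d)=2$, so the case $\gamma>2$ never occurs, and Proposition~\ref{prop:HK_restr_hat_group} already covers $t=1$ (its hypotheses are only $\omega=1$ and $\gamma\in\{1,2\}$); moreover the paper gets $\omega=1$ in one line from $\omega=\left(\tfrac{2t}{\gamma},\tfrac{2d}{\gamma},\gamma\right)$, which for $\gamma=2$ equals $(t,d,2)=1$, with no need for the parity detour through Remark~\ref{rmk:gamma_t_d}. Second, and more substantively, your invocation of surjectivity of $r$ via Theorem~\ref{thm:Nik_surjMor} is superfluous for the implication you actually need: the preimage of a normal subgroup under \emph{any} group homomorphism is normal, so $K\norm O(A_{h^\perp})$ gives $r^{-1}(K)\norm O(h^\perp)$ with no hypothesis on $r$ whatsoever. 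Surjectivity is only relevant for the converse direction (normality of $\widehat O(L_{2t},h)$ forcing normality of $K$), which is why the paper invokes it only in the ``if and only if'' discussion preceding the corollary, under the extra assumption $\rk(M)\ge 2$. This matters because your justification of Nikulin's theorem imports hypotheses absent from the statement --- that the unimodular part $M$ has rank at least $2$ and that $h^\perp$ is indefinite --- whereas the corollary as stated, and the paper's proof of it, hold for an arbitrary even unimodular $M$. Deleting the surjectivity step (and citing only the elementary preimage fact) repairs this and makes your argument fully general.
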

    \begin{proof}
    Since the divisibility $\gamma$ of $h$ divides $(2t, 2d)$, if $t$ and $d$ are coprime, it follows that $\gamma$ is either $1$ or $2$. Moreover, from $(t,d)=1$ we obtain $\omega = (\frac{2t}{\gamma}, \frac{2d}{\gamma}, \gamma) = 1$. Proposition~\ref{prop:dec_disc_group} provides an isomorphism
    $$
    A_{h^\perp} \simeq \Z/\tfrac{2d}{\gamma}\Z \times \Z/\tfrac{2t}{\gamma}\Z
    $$
    where $t$, $d$ and $\gamma$ satisfy the hypotheses of Lemma~\ref{lemma:norm_group}. Therefore, the group $K$ is normal in $\mathrm{Aut}(A_{h^\perp})$ and hence in $O(A_{h^\perp})$. Since $K =r^{-1}(\widehat O(L_{2t}, h))$, we obtain that $\widehat O(L_{2t}, h)$ is a normal subgroup of $O(h^\perp)$.
    \end{proof}

    \begin{remark}\label{rmk:HK_summary_normality_subgroup}
    To sum up, we have proved the following.
    \begin{itemize}
        \item{If $t=1$ or $\gamma > 2$, then $\widehat O(L_{2t}, h) = \widetilde O(h^\perp)$ (Proposition~\ref{prop:Beri}), hence $\widehat O(L_{2t}, h)$ is a normal subgroup of $O(h^\perp)$.}
        \item{if $t>1$, with $\omega =1$ and $\gamma \in\{1, 2\}$, then $\widehat O(L_{2t}, h) =r^{-1}\left(\bigg\{\id, s=\begin{pmatrix} 1& 0 \\ 0 & -1\end{pmatrix}\bigg\}\right).$ When moreover $(t,d)=1$,
    the group $\widehat O(L_{2t}, h) $ is a normal subgroup of $O(h^\perp)$.}
    \end{itemize}
    \end{remark}



\section{Vectors $\beta$ that define nontrivial reflections}
    If $\widehat O(\Lambda_{K3^{[m]}}, h)\norm O(h^\perp)$,  the cover $\cover$ described in \eqref{eqn:cover_HK} is a ramified Galois cover of group
$$
    G \simeq O(h^\perp)\big/\langle\widehat O(\Lambda_{K3^{[m]}}, h), -\id \rangle,
$$
By Remark~\ref{rmk:HK_summary_normality_subgroup} (used for $t=-m-1$), we obtain that
\begin{itemize}
    \item{if $m=2$, or $\gamma > 2$, then $G\simeq O(A_{h^\perp})/\{\pm \id\}$;}
    \item{if $\omega =\left(\frac{2(m-1)}{\gamma}, \gamma\right)=1$ with $\gamma \in\{1, 2\}$ and $m>2$, and we suppose $\widehat O(\Lambda_{K3^{[m]}}, h)\norm O(h^\perp)$, then $G\simeq O(A_{h^\perp})/ \left\langle  s, -\id \right\rangle$. For instance, this is the case for $(m-1,d)=1$.}
\end{itemize}

In Theorem~\ref{thm:divisori_ref}, we showed that the ramification divisors of $\cover$ are parametrized by vectors $\beta\in h^\perp$ that define nontrivial reflections in $G$.
As in the case of polarized K3 surfaces studied in \cite{stellari2008}, we would like to  characterize these vectors $\beta$, at least in some cases.\\

For $\gamma>2$, the following result allows us to characterize the ramification divisor of $\cover$.
\begin{proposition}[{\cite[Proposition~3.1]{GritsenkoRefl2007}}]
Let $\beta\in \Lambda$ be a primitive vector that defines a reflection. The isometry $r_\beta$ acts as $\id$ on the discriminant group $\beta^\perp$ if and only if $\beta^2 = -2$.
\end{proposition}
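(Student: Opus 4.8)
The plan is to compute the automorphism $\bar{r_\beta}$ that $r_\beta$ induces on the discriminant group $A_\Lambda$ directly from the reflection formula, and to reduce the triviality of this action to a single divisibility statement about $\beta$. Since $\beta$ defines a reflection, the map $r_\beta(x)=x-\tfrac{2x\cdot\beta}{\beta^2}\beta$ is an isometry of $\Lambda$, and hence induces $\bar{r_\beta}\in O(A_\Lambda)$ by the construction recalled just before Theorem~\ref{thm:Nik_surjMor}. Applying the same formula to a representative $x^*\in\Lambda^\vee$ of a class of $A_\Lambda$, I obtain
$$
r_\beta(x^*)-x^* = -\frac{2(x^*\cdot\beta)}{\beta^2}\,\beta ,
$$
so $\bar{r_\beta}$ fixes the class of $x^*$ exactly when the right-hand side lies in $\Lambda$.

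The key step is to replace the condition ``for all $x^*\in\Lambda^\vee$'' by a single membership. Because $\beta$ is primitive, I would complete it to a $\Z$-basis of $\Lambda$ and pass to the dual basis of $\Lambda^\vee$; the dual element corresponding to $\beta$ pairs to $1$ with $\beta$, whence $\beta\cdot\Lambda^\vee=\Z$. Thus, as $x^*$ runs through $\Lambda^\vee$, the integer $x^*\cdot\beta$ attains every value in $\Z$, and therefore $\bar{r_\beta}=\id$ on $A_\Lambda$ if and only if $\tfrac{2}{\beta^2}\beta\in\Lambda$.

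It then remains to read off this last condition. Writing $\beta^2=-2n$ with $n\ge 1$, one has $\tfrac{2}{\beta^2}\beta=-\tfrac1n\beta$, which lies in $\Lambda$ if and only if $\beta\in n\Lambda$; as $\beta$ is primitive this forces $n=1$, that is $\beta^2=-2$. The converse is immediate: if $\beta^2=-2$ then $\tfrac{2}{\beta^2}\beta=-\beta\in\Lambda$, so $\bar{r_\beta}$ is the identity on $A_\Lambda$. This yields the desired equivalence.

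The one genuinely substantive point is the reduction in the second paragraph: the identity $\beta\cdot\Lambda^\vee=\Z$, valid for every primitive vector, is exactly what collapses the infinitely many conditions indexed by the classes of $A_\Lambda$ into the single relation $\tfrac{2}{\beta^2}\beta\in\Lambda$; once this is in place the remaining computation is routine. I would also emphasize that the hypothesis that $\beta$ defines a reflection is what guarantees that $\bar{r_\beta}$ is a well-defined element of $O(A_\Lambda)$, so it is indispensable for the reverse implication to make sense.
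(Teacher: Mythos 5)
Your proof is correct, and there is nothing in the paper to compare it against: the proposition is quoted from \cite[Proposition~3.1]{GritsenkoRefl2007} without proof. Your argument is essentially the standard one from that reference — reduce triviality of the induced action on $A_\Lambda$ to the single condition $\tfrac{2}{\beta^2}\beta\in\Lambda$ via $\beta\cdot\Lambda^\vee=\Z$, then invoke primitivity and evenness.

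Two points of precision. First, the statement's phrase ``acts as $\id$ on the discriminant group $\beta^\perp$'' must be read as acting on $A_\Lambda$, the discriminant group of $\Lambda$ (the action on $A_{\beta^\perp}$ is trivially the identity, since $r_\beta\vert_{\beta^\perp}=\id$); you read it this way, correctly. Second, when you write $\beta^2=-2n$ with $n\ge 1$ you tacitly assume $\beta^2<0$. This assumption is genuinely needed for the ``only if'' direction: if $\Lambda\simeq U\oplus\Lambda'$, the vector $e+f$ is primitive of square $2$, defines a reflection, and acts as $\id$ on $A_\Lambda$ — and lattices of this shape are exactly the ones the paper applies the proposition to. The assumption is implicit in the paper's conventions (Definition~\ref{def:reflection} and the surrounding section only concern vectors of negative square), but your proof would be cleaner if you stated it. Finally, a small inaccuracy in your closing remark: when $\beta^2=-2$ the reflection is automatically integral (since $-2$ divides $2\,x\cdot\beta$ for every $x\in\Lambda$), so the hypothesis that $\beta$ defines a reflection does no work in the reverse implication; it is only needed to make sense of the forward one.
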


We now assume $\gamma = 1$. Our first result, Theorem~\ref{thm:HK_beta_reflection_numerical}, characterizes vectors $\beta$ that define trivial reflections. Corollary~\ref{cor:div_irred_HK1} then gives a list of all divisorial components of the $G$-Galois cover $\cover$ of \eqref{eqn:cover_HK}. \\

As in Section~\ref{sec:isometries_lattices_nomality}, we consider a lattice $L_{2t} =M \oplus U \oplus \Z \ell$ and a vector $h\in L_{2t}$ of square $2d$ and divisibility $\gamma=1$.  In this case the lattice  $\Lambda = h^\perp$ is isomorphic to
$$ M \oplus \Z k \oplus  \Z \ell,$$
where $k =  e - df$, and with $k^2 = -2d$ and $\ell^2=-2t$ (see Remark~\ref{rmk:gamma1}). Proposition~\ref{prop:dec_disc_group} shows that the discriminant group $A_\Lambda$ is isomorphic to
$$
    A_\Lambda \simeq \langle \bar k_1\rangle \times\langle \bar k_2\rangle\simeq \Z/2d\Z \times \Z/2t\Z,
$$
where $k_1 = \frac{e+df}{2d}-f = \frac{e-df}{2d}=k_*$ and $k_2 = \frac{\ell}{2t}=\ell_*$.\\

Each primitive vector $\beta\in\Lambda$ can be written as
$$
    \beta = \cm\vm + \ck k + \cl \ell
$$
where $\cm, \ck, \cl$ are relatively prime integers and $\vm\in M$ is a primitive vector. Such a vector $\beta$ has divisibility
$\div(\beta) = (\cm , 2d\ck, 2t\cl)$ and square
\begin{equation}\label{eqn:HK_square}
\beta^2 = \cm^2\vm^2 -2d\ck^2 -2t\cl^2.
\end{equation}
The vector $\beta$ defines a reflection if and only if $\beta^2\mid 2\div(\beta)$. Observe that this implies
\begin{equation}\label{eqn:beta2divides}
\beta^2\mid 4d\ck \qquad \text{ and } \qquad \beta^2\mid 4t\cl.
\end{equation}
Since $\beta\cdot k_* = -\ck$, we obtain
\begin{align*}
    [r_\beta(k_*)] &= \left[ k_* - 2\frac{\beta\cdot k_*}{\beta^2}\beta\right]\\
                &= \left[k_* + 2\frac{\ck}{\beta^2}(2d\ck k_* + 2t\cl \ell_*)\right]\\
                &=\left[\left(1 + \frac{4d\ck^2}{\beta^2}\right)k_* + \frac{4t\cl\ck}{\beta^2} \ell_* \right]
\end{align*}
in $A_\Lambda$, and an analogous computation gives $[r_\beta(\ell_*)]$. Hence $[r_\beta]\in  O(A_\Lambda)$ is the matrix

\begin{equation}\label{eqn:matrix_HK}
\begin{pmatrix}
\left[1 + \dfrac{4d\ck^2}{\beta^2}\right]_{2d} & \left[\dfrac{4d\ck\cl}{\beta^2}\right]_{2d}\\[10pt]
\left[\dfrac{4t\cl\ck}{\beta^2}\right]_{2t} &\left[1 + \dfrac{4t\cl^2}{\beta^2}\right]_{2t}
\end{pmatrix} \in  O(\Z/2d\Z \times \Z/2t\Z),
\end{equation}
where the entries of the matrix \eqref{eqn:matrix_HK} are integers by \eqref{eqn:beta2divides}.

If $\widehat O(L_{2t}, h)\norm O(h^\perp)$, the group $G$ is isomorphic to $O(A_\Lambda)/\{\pm s, \pm \id\}$, where $s = \begin{pmatrix} 1& 0 \\ 0 & -1\end{pmatrix}$.\\
The next theorem characterize vectors $\beta$ that define a reflection $r_\beta$ trivial in $G$.
\begin{theorem}\label{thm:HK_beta_reflection_numerical}
    Let $\beta\in M \oplus \Z(-2d) \oplus \Z(-2t)$ be a primitive vector with $\beta^2<0$. Let $k$ be a generator of the factor $\Z(-2d)$ and let $\ell$ be a generator of the factor $\Z(-2t)$.\\
    The vector $\beta$ defines a reflection $r_\beta$ such that $[r_\beta]$ is contained in the group $\{\pm s, \pm \id\}$ if and only if $\beta$ satisfies both conditions:
    \begin{enumerate}[label=\rm{\alph*})]
        \item\label{en:thm:relazioni:cond1} {$\beta^2\mid 2\mathrm{div}(\beta)$;}
        \item \label{en:thm:relazioni:cond2}{one has
                        \begin{itemize}[label= -]
                            \item either $\beta^2 = -2$;
                            \item or $\beta^2 = -2t$ and $2td\mid \beta\cdot k$;
                            \item or $\beta^2 = -2d$ and $2td\mid \beta\cdot \ell$;
                            \item or $\beta^2 = -2td$, $(t,d)=1$, and $2td\mid (\beta\cdot k, \beta\cdot \ell)$.
                        \end{itemize}}
    \end{enumerate}
\end{theorem}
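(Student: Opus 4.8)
The plan is to reduce the condition $[r_\beta]\in\{\pm s,\pm\id\}$ to a statement about the matrix \eqref{eqn:matrix_HK} and then read off the arithmetic it imposes. Since $\pm\id$ and $\pm s$ are exactly the four diagonal matrices $\mathrm{diag}(\epsilon_1,\epsilon_2)$ with $\epsilon_1,\epsilon_2\in\{\pm1\}$ in the decomposition $A_\Lambda\simeq \Z/2d\Z\times\Z/2t\Z$, the membership $[r_\beta]\in\{\pm s,\pm\id\}$ is equivalent to: the two off-diagonal entries of \eqref{eqn:matrix_HK} vanish, which for either entry amounts to $\beta^2\mid 2\ck\cl$; and the two diagonal entries are $\equiv\pm1$. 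Throughout, condition \ref{en:thm:relazioni:cond1} is in force, so $\beta$ defines a reflection; besides \eqref{eqn:beta2divides} I will use that $\div(\beta)\mid \cm$ gives the extra divisibility $\beta^2\mid 2\cm$.

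For sufficiency I substitute each of the four possibilities of \ref{en:thm:relazioni:cond2} into \eqref{eqn:matrix_HK}. The case $\beta^2=-2$ is immediate and separate: for a $(-2)$-vector one has $r_\beta(x)=x+(x\cdot\beta)\beta$, so $(r_\beta-\id)(x)\in\Lambda$ for every $x\in\Lambda^\vee$ and hence $[r_\beta]=\id$. In the three remaining cases I feed the divisibility hypotheses, namely $t\mid\ck$ when $\beta^2=-2t$ (this is $2td\mid\beta\cdot k$), $d\mid\cl$ when $\beta^2=-2d$, and $t\mid\ck,\ d\mid\cl$ when $\beta^2=-2td$, together with the quadratic relation \eqref{eqn:HK_square}, to reduce the diagonal entries modulo $2d$ and $2t$. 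For example, if $\beta^2=-2t$ and $\ck=t\ck'$, then \eqref{eqn:beta2divides} forces $t\mid\cm$ and \eqref{eqn:HK_square} then forces $\cl^2\equiv1\pmod t$, so the diagonal becomes $(1,-1)$ and the off-diagonal vanishes, giving $[r_\beta]=s$; the cases $-2d$ and $-2td$ yield $-s$ and $-\id$ by the same computation, the last using $(t,d)=1$.

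The substantive direction is necessity. Assuming the matrix \eqref{eqn:matrix_HK} equals $\mathrm{diag}(\epsilon_1,\epsilon_2)$, I translate the entries into divisibilities of $N:=\beta^2$: the choice $\epsilon_i=1$ gives $N\mid 2\ck^2$ (resp. $2\cl^2$), while $\epsilon_i=-1$ gives $N\mid 2d\ck^2$ (resp. $2t\cl^2$) with quotient $\tfrac{2d\ck^2}{|N|}\equiv1\pmod d$ (resp. $\tfrac{2t\cl^2}{|N|}\equiv1\pmod t$), and the off-diagonal gives $N\mid 2\ck\cl$. I then carry out a prime-by-prime valuation analysis of $N$, combining these with $N\mid 2\cm$, primitivity $(\cm,\ck,\cl)=1$, and \eqref{eqn:HK_square}. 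In the model case $\epsilon_1=\epsilon_2=1$: any odd prime $p\mid N$ would divide $\cm,\ck$ and $\cl$, contradicting primitivity, while $4\mid N$ would force $2\mid\cm,\ck,\cl$; as $N$ is even this leaves only $N=-2$. When a diagonal entry equals $-1$, the residue-$1$ condition upgrades the relevant valuation inequality to an equality (e.g. it yields $v_p(N)=v_p(t)+2v_p(\cl)$ at primes $p\mid t$), which pins $N$ to $-2t$, $-2d$ or $-2td$ according to the sign pattern; the off-diagonal divisibility $N\mid 2\ck\cl$ then promotes the ``squared'' statement $t\mid\ck^2$ to $t\mid\ck$ (resp. $d\mid\cl$), recovering $2td\mid\beta\cdot k$ and $2td\mid\beta\cdot\ell$, and in the $-\id$ case it forces $(t,d)=1$ (a common prime of $t$ and $d$ is quickly seen to be incompatible with the two residue conditions and the off-diagonal bound).

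The main obstacle is precisely this necessity bookkeeping. Odd primes are straightforward, but the prime $2$ must be treated separately, since the inequalities of the form $2v_2(\ck)\ge \dots$ lose a factor and interact delicately with primitivity; a small example such as $t=d=2$ shows that $[r_\beta]=-\id$ is then impossible, so it is the $2$-adic step that upgrades odd-coprimality to the full coprimality $(t,d)=1$. The other delicate point is ordering the arguments correctly: one must combine the residue-$1$ conditions, the off-diagonal divisibility, and primitivity in the right sequence to extract the sharp divisibilities $t\mid\ck$, $d\mid\cl$ rather than merely their squared consequences. I expect the cleanest write-up to treat the four sign patterns in parallel, isolating a single lemma that, given $\epsilon_i=-1$, computes $v_p(N)$ at each prime dividing $t$ (resp. $d$).
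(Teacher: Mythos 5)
Your proposal is correct and follows essentially the same route as the paper: reduce membership in $\{\pm s,\pm\id\}$ to the explicit matrix \eqref{eqn:matrix_HK}, translate the off-diagonal vanishing into $\beta^2\mid 2\ck\cl$ and the diagonal entries into the divisibility/congruence conditions, verify sufficiency by direct substitution using \eqref{eqn:HK_square}, and prove necessity by combining these with $\beta^2\mid 2\cm$ and primitivity to pin down $\beta^2$ and upgrade $t\mid\ck^2$ to $t\mid\ck$ (resp. $d\mid\cl$). The only difference is organizational — you run the necessity bookkeeping uniformly by prime valuations, while the paper mostly manipulates gcd's and reserves the valuation argument for the step $(t,d)=1$ — and your one slip (attributing $t\mid\cm$ to \eqref{eqn:beta2divides} rather than to $\beta^2\mid 2\div(\beta)\mid 2\cm$) is harmless since you state that divisibility explicitly.
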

\begin{proof}
    The vector $\beta$ defines a reflection if and only if $\beta^2\mid 2\div(\beta)$. We have
    \begin{equation}\label{eqn:inv_divisibilityDividesLcm}
        \div(\beta) = (\cm, 2d\ck, 2t\cl) \mid 2\lcm(t,d)(a,b,c) = 2\lcm(t,d),
    \end{equation} where the last equality holds because $\beta$ is primitive. Therefore,
    \begin{equation}\label{eqn:beta^2divideslcm}
    \beta^2\mid 4\lcm(t,d).
    \end{equation}

    We want to characterize those vectors $\beta$ such that
    \begin{equation}\label{eqn:refelction_trivial_HK_gamma1}
    [r_\beta] \in \left\{ \pm\begin{pmatrix} 1& 0 \\ 0 & -1\end{pmatrix}, \pm \id\right\},
    \end{equation}
    where the matrix $[r_\beta]$ is given in equation~\eqref{eqn:matrix_HK}.\\

    Assume \eqref{eqn:refelction_trivial_HK_gamma1} holds.  The off-diagonal terms are then zero, namely $2t\beta^2\mid 4t\ck\cl$ and $2d\beta^2\mid 4d\ck\cl$, or equivalently
    \begin{equation}\label{eqn:HK_off_diag}
         \beta^2\mid 2\ck\cl.
    \end{equation}

    As for the diagonal terms, we want to understand when they are equal to $\pm 1$. For the first entry, we have
    \begin{enumerate}[label = (\Alph*)]\label{enum:HK_diag}
        \item \label{enum:HK_diag_1} $1 + \dfrac{4d\ck^2}{\beta^2}\equiv  1 \pmod{2d}$ if and only if $2d\beta^2\mid 4d\ck^2$, or equivalently $ \beta^2\mid 2\ck^2$.
         \item \label{enum:HK_diag_-1} $1 + \dfrac{4d\ck^2}{\beta^2}\equiv  -1 \pmod{2d}$ if and only if $2d\beta^2\mid 2(\beta^2 +2d\ck^2) $, so exactly when
         \begin{equation}\label{eqn:HK_diag_-1}
            d\beta^2\mid \beta^2 +2d\ck^2,
         \end{equation}
         which yields $d\mid \beta^2$ and $\beta^2\mid 2d\ck^2$. We show that necessarily $2d\mid \beta^2$. Indeed, if not, $\beta^2$ and $d$ have the same valuation at $2$, hence $d$ is even and, from $\beta^2\mid 2d\ck^2$,
         we obtain $\beta^2\mid d\ck^2$. Therefore, by \eqref{eqn:HK_diag_-1}, the even number $d$ divides the odd number $1 + 2\frac{d\ck^2}{\beta^2}$, and clearly it is not possible.\\
         In conclusion, $1 + \dfrac{4d\ck^2}{\beta^2}\equiv  -1 \pmod{2d}$ implies $2d\mid \beta^2$ and $\beta^2\mid 2d\ck^2$.
    \end{enumerate}
    The same argument applied to the second diagonal term yields analogous results with $t$ and $\cl$ in place of $d$ and $\ck$ respectively. Namely, we have
    \begin{enumerate}[label = (\Alph*')]\label{enum:HK_diag}
        \item \label{enum:HK_diag_1'} $1 + \dfrac{4t\cl^2}{\beta^2}\equiv  1 \pmod{2t}$ if and only if $\beta^2\mid 2\cl^2$.
         \item \label{enum:HK_diag_-1'} $1 + \dfrac{4d\ck^2}{\beta^2}\equiv  -1 \pmod{2d}$ if and only if
         \begin{equation}\label{eqn:HK_diag_-1'}
            t\beta^2\mid \beta^2 +2t\cl^2.
         \end{equation}
         Moreover, the last condition implies $2t\mid \beta^2$ and $\beta^2\mid 2t\cl^2$.
    \end{enumerate}

    Putting all together, we can characterize primitive vectors $\beta$ that satisfies a) and such that $[r_\beta]\in \{\pm \id, \pm s\}$.
    \begin{itemize}
        \item $[r_\beta] = [\id]$ if and only if $\beta^2 = -2$.\\ Clearly, for $\beta^2=-2$, the matrix \eqref{eqn:matrix_HK} is the identity. Conversely, if $[r_\beta] = [\id]$, the two diagonal terms are equal to 1, and \ref{enum:HK_diag_1} and \ref{enum:HK_diag_1'} give
        $$
        \beta^2\mid 2\ck^2 \qquad\text{and}\qquad \beta^2\mid 2\cl^2.
        $$
        Moreover $\beta^2\mid 2\div(\beta)\mid 2\cm$. Since $(\cm,\ck,\cl)=1$, we obtain $\beta^2\mid 2$ hence the only possibility is $\beta^2=-2$, as we assumed $\beta^2<0$.
        \item $[r_\beta] = s$ if and only if $\beta^2 = -2t$ and $t\mid \beta\cdot (k/2d)$.\\
        Indeed, if $[r_\beta] = s$, the second diagonal term must be equal to $-1$, hence from \ref{enum:HK_diag_-1'} we obtain
        $$
        \beta^2 = 2t i \text{ for some negative integer $i$, with}  \quad 2ti=\beta^2\mid 2t\cl^2,
        $$
        hence $i\mid \cl^2$. Moreover, since the first diagonal term is 1, \ref{enum:HK_diag_1} implies $2ti = \beta^2\mid 2\ck^2$, and therefore $i\mid \ck^2$. Finally, we also have  $2ti=\beta^2\mid 2\div(\beta)\mid 2\cm$, hence
        $$
        i \mid (\cm, \ck^2, \cl^2)=1.
        $$
        Thus we obtain $\beta^2 = -2t$. The number $\beta^2$ divides both $2\ck^2$ and $2\cm$. Since, by \eqref{eqn:HK_off_diag}, $\beta^2$ also divides $2\ck\cl$, we obtain $-2t=\beta^2\mid 2(\cm, \ck^2, \ck\cl)\mid 2\ck$, which implies
        $$t\mid -\ck = \beta\cdot (k/2d).$$
        Conversely, for $\beta^2=-2t$ with $\beta^2\mid 2\div(\beta)$ and $t\mid \ck = \beta\cdot (k/2d)$, we show that $[r_\beta]$ is equal to $s$. Indeed, the only nontrivial check is to show that  $\left[1 + \dfrac{4t\cl^2}{\beta^2}\right]_{2t}$ is equal to $[-1]_{2t}$. By equation~\eqref{eqn:HK_diag_-1'}, this condition can be rewritten as $t\beta^2\mid \beta^2 + 2t\cl^2$. By equation~\eqref{eqn:HK_square}, we have
        $$
             \beta^2 + 2t\cl^2 = \cm^2\vm^2 -2d\ck^2,
        $$
        where $2t=-\beta^2\mid 2\div(\beta)\mid 2\cm$, $t\mid \ck$ by hypothesis, and $m^2$ is even. Hence, we obtain $t\beta^2=-2t^2\mid \cm^2\vm^2 -2d\ck^2 = \beta^2 + 2t\cl^2$, which is the condition we needed.
        \item Analogously, $[r_\beta] = -s$ if and only if $\beta^2 = -2d$ and $d\mid \beta\cdot (\ell/2t)$.
        \item $[r_\beta] = -\id$ if and only if $\beta^2 = -2td$ and $\beta^2\mid (\beta\cdot k, \beta\cdot l)$.\\

        If $[r_\beta] = -\id$, the diagonal terms must be equal to $-1$. From \ref{enum:HK_diag_-1} and \ref{enum:HK_diag_-1'} we obtain
        $$2d\mid \beta^2 \qquad\text{and}\qquad \beta^2\mid 2d\ck^2,$$
        $$2t\mid \beta^2 \qquad\text{and}\qquad \beta^2\mid 2t\cl^2.$$
        Therefore, we get $2\lcm(t,d)\mid \beta^2$, which in turns divides $4\lcm(t,d)$ by \eqref{eqn:beta^2divideslcm}, so $\beta^2$ is either $-2\lcm(t,d)$ or $-4\lcm(t,d)$.\\
        We exclude the case $\beta^2 = -4\lcm(t,d)$. Indeed, in this case, from $\beta^2\mid 2d\ck^2$, we obtain $\frac{4\lcm(t,d)}{2d}\mid \ck^2$, hence $2\mid \ck^2$, and analogously, from $\beta^2\mid 2t\cl^2$, we get $2\mid \cl^2$. Since we also have $2\mid \cm$, because $\beta^2\mid 2\div(\beta)\mid 2\cm$, we get a contradiction, $\beta$ being primitive.\\

        Therefore $\beta^2 = -2\lcm(t,d)$. If we denote by $z$ the number $(t,d)$, and we write $t = z\tau$ and $d=z\delta$, then $(\tau, \delta)=1$ and $\lcm(t,d)=z\tau\delta$. We show that, still under the hypothesis $[r_\beta]=-\id$, we have $z=1$.\\
        Condition \eqref{eqn:HK_diag_-1} is equivalent to $1+\dfrac{2d\ck^2}{\beta^2}\equiv 0\pmod{d}$. Thus,
        $$
            1 - \frac{2z\delta\ck^2}{2z\tau\delta} \equiv 0 \pmod{z}, \qquad\text{hence}\qquad \frac{\ck^2}\tau\equiv 1 \pmod{z}.
        $$
        In particular, we can write $\ck^2 = \tau\ck_1$, where $(\ck_1,z)=1$. Analogously, using \eqref{eqn:HK_diag_-1'}, we show that there exists $\cl_1$ coprime with $z$ such that $\cl^2 = \delta\cl_1$. The vanishing of the off-diagonal terms condition (see equation \eqref{eqn:HK_off_diag}) gives
        $$
        2z\tau\delta = -\beta^2\mid 2\ck\cl.
        $$
        Hence, for each prime $q$ that divides $z$, we have
        \begin{equation}\label{eqn:HK_conto_valutazioni}
            v_q(z) + v_q(\tau\delta) = v_q(z\tau\delta) \le v_q(\ck\cl) = \frac{v_q(\ck^2\cl^2)}2= \frac{v_q(\tau\ck_1\delta \cl_1)}2 = \frac{v_q(\tau\delta)}{2},
        \end{equation}
        where in the last equality we used that $v_q(\ck_1)=v_q(\cl_1)=0$ because $q\mid z$ and $z$ is coprime to both $\ck_1$ and $\cl_1$. Equation~\eqref{eqn:HK_conto_valutazioni} implies $v_q(z)\le 0$, which is absurd since $q\mid z$. Hence $z=1$.

        Therefore, we have $(t,d)=1$ and $\beta^2 =-2td$. The divisibility relations $\beta^2\mid 2\div(\beta)\mid 4d\ck$ and $\beta^2\mid 2d\ck^2$ imply
        $$
            -2td = \beta^2\mid (4d\ck, 2d\ck^2) = 2d(2\ck, \ck^2),
        $$
        and thus $t\mid (2\ck, \ck^2)$. Moreover, from equation \eqref{eqn:HK_off_diag}, we have $-2td=\beta^2\mid 2\ck\cl$, therefore we obtain
        $$
        t\mid (\ck\cl, 2\ck, \ck^2) = \ck(\cl, 2, \ck).
        $$
        We prove that this implies $t\mid \ck$. If $(\cl, 2, \ck)=1$, the statement is clear. Otherwise, since $2\mid (\ck, \cl)$ and $\beta$ is primitive, then $2\nmid \cm$. Therefore, since $-2td =\beta^2\mid2\div(\beta)\mid 2\cm$, we obtain that $t$ is odd and hence if $t$ divides $2\ck$, then it also divides $\ck$.\\
        Analogously, we obtain  $d\mid \cl$. Since $\beta\cdot k = -2d\ck$ and $\beta \cdot \ell = -2t\cl$, we obtain the required condition $\beta^2\mid (\beta\cdot k, \beta \cdot \ell)$.\\

        Conversely, as in the case $[r_\beta]=s$, direct computations show that if $\beta$ is a primitive vector that defines a reflection of square $\beta^2=-2td$ and such that $\beta^2\mid (\beta\cdot k, \beta\cdot \ell)$, then $[r_\beta]=-\id$. First observe that the above conditions imply $(t,d)=1$.
        Indeed, we have $\beta\cdot k = -2d\ck$ and $\beta\cdot \ell= -2d\cl$. Hence $\beta^2\mid (\beta\cdot k, \beta\cdot \ell)$ is equivalent to $2td\mid 2d\ck$ and $2td\mid 2t\cl$, and namely to $t\mid \ck$ and $d\mid \cl$. Since $\beta^2\mid 2\div(\beta)\mid 2\cm$, it follows that $(t,d)$ divides $\cm$. It also divides  $\ck$ and $\cl$, from the previous observation. Since $\beta$ is primitive, this implies $(t,d)=1$.\\ Finally, equation \eqref{eqn:HK_off_diag} is easily verified and the computation for  equations \eqref{eqn:HK_diag_-1} and \eqref{eqn:HK_diag_-1'} is the same as in the case $[r_\beta]=s$.
    \end{itemize}
\end{proof}

We observe that the conditions found on $\beta$ are invariant under the action of $\widehat O(\Lambda_{K3^{[m]}}, h)$. Indeed, if $g\in \widehat O(\Lambda_{K3^{[m]}}, h)$, we have $g(l) = \pm l + 2t n$ and $g(k)= k + 2d n'$ for some $n, n'\in \Lambda$. 

Therefore, if $\beta^2 = -2d$, then $2td\mid \beta\cdot l$ if and only if $2td\mid g(\beta)\cdot l$. Indeed,
$$
    \beta\cdot l = g(\beta)\cdot g(l)= g(\beta)\cdot(\pm l) + 2t g(\beta)\cdot n,
$$
and, since $-2d = \beta^2\mid 2\div(\beta)=2\div(g(\beta))$, we have $2td\mid 2t g(\beta)\cdot n$. The invariance of the other conditions can be shown in a similar way using $g(k)= k + 2d n'$.\\

The next corollary is our main result: we determine the ramification divisors of the Galois cover \eqref{eqn:cover_HK}, in the case of polarized hyper-Kähler manifolds of polarization type of square $2d$ and divisibility $1$. It applies in particular when $(m-1,d)=1$ (see Remark~\ref{rmk:HK_summary_normality_subgroup}).
\begin{corollary}\label{cor:div_irred_HK1}
Let $h\in \Lambda_{K3^{[m]}}$ be a primitive vector of square $2d$ and divisibility $1$ such that $\widehat O(\Lambda_{K3^{[m]}}, h)$ is a normal subgroup of $O(h^\perp)$.
The divisorial components of the ramification locus of $\cover: \cD_{h^\perp}/{\widehat O(\Lambda_{K3^{[m]}}, h^\perp)} \map \cD_{h^\perp}/O(h^\perp)$ are the Heegner divisors $\Hh_{\beta^\perp}$ such that $\beta$ is primitive and satisfies both conditions
\begin{enumerate}[label=\rm{\alph*})]
    \item {$\beta^2\mid 2\div(\beta)$;}
    \item {$\beta^2$ is such that:
                        \begin{itemize}
                            \item $\beta^2 \neq -2$;
                            \item if $\beta^2 = -2(m-1)$, then $2(m-1)d\nmid \beta\cdot k$;
                            \item if $\beta^2 = -2d$, then $2(m-1)d\nmid \beta\cdot \ell$;
                            \item if $\beta^2 = -2(m-1)d$, then $2(m-1)d\nmid (\beta\cdot k, \beta\cdot \ell)$.
                        \end{itemize}}
\end{enumerate}
\end{corollary}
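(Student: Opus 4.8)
The plan is to deduce the corollary by assembling three earlier results: Theorem~\ref{thm:divisori_ref}, which matches the divisorial ramification components of $\cover$ with the Heegner divisors attached to primitive vectors defining a \emph{nontrivial} reflection in the Galois group $G$; the determination of $G$ from Remark~\ref{rmk:HK_summary_normality_subgroup}; and Theorem~\ref{thm:HK_beta_reflection_numerical}, which lists exactly those reflecting vectors whose class $r(r_\beta)\in O(A_\Lambda)$ lands in $\{\pm\id,\pm s\}$. The corollary is then the logical complement of that last list, so no fresh computation is required; the real content lies in aligning the subgroup by which we quotient to form $G$ with the group $\{\pm\id,\pm s\}$ of Theorem~\ref{thm:HK_beta_reflection_numerical}.

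First I would compute $G$. Here $\Lambda=h^\perp\simeq M\oplus\Z(-2d)\oplus\Z(-2t)$ with $t=m-1$, and since $\gamma=\div(h)=1$ one has $\omega=(2t/\gamma,\gamma)=1$ automatically, so $A_\Lambda\simeq\Z/2d\Z\times\Z/2t\Z$ by Proposition~\ref{prop:dec_disc_group}. By Remark~\ref{rmk:HK_summary_normality_subgroup} (applied with $t=m-1$), under the normality hypothesis one has $\widehat O(\Lambda_{K3^{[m]}},h)=r^{-1}(\{\id,s\})$ when $m>2$, while for $m=2$ one has $t=1$ and $\widehat O(\Lambda_{K3^{[m]}},h)=\widetilde O(h^\perp)=r^{-1}(\{\id\})$; but in the latter case $s=\mathrm{diag}(1,-1)$ acts as the identity on $\Z/2d\Z\times\Z/2\Z$, so $s=\id$ and the two descriptions agree. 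Adjoining $-\id$, which satisfies $r(-\id)=-\id$, I obtain in all cases $\langle\widehat O(\Lambda_{K3^{[m]}},h),-\id\rangle=r^{-1}(\{\pm\id,\pm s\})$. As the unimodular part $M$ has rank $\ge 2$, Theorem~\ref{thm:Nik_surjMor} makes $r\colon O(h^\perp)\to O(A_\Lambda)$ surjective, whence
$$
G\;\simeq\;O(A_\Lambda)\big/\{\pm\id,\pm s\}.
$$

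Next I would apply Theorem~\ref{thm:divisori_ref} with $\Gamma=\widehat O(\Lambda_{K3^{[m]}},h)$ and $O=O(h^\perp)$: the divisorial ramification components of $\cover$ are the Heegner divisors $\Hh_{\beta^\perp}$ for primitive $\beta$ with $\beta^2<0$ defining a nontrivial reflection in $G$. By Definition~\ref{def:reflection} this asks for $\beta^2\mid 2\div(\beta)$ — the reflection condition, which automatically places $r_\beta$ in $O(h^\perp)=\bar O$ — together with $r(r_\beta)\notin\{\pm\id,\pm s\}$. Now $\{\pm\id,\pm s\}$ is precisely the set appearing in Theorem~\ref{thm:HK_beta_reflection_numerical}: granted the reflection condition, that theorem states $r(r_\beta)\in\{\pm\id,\pm s\}$ if and only if its condition~b) holds. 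Negating condition~b) while retaining the reflection condition, and substituting $t=m-1$, reproduces verbatim condition~b) of the corollary; in particular the clause $(t,d)=1$ attached to the case $\beta^2=-2td$ in Theorem~\ref{thm:HK_beta_reflection_numerical} may be dropped, since the converse direction of that theorem shows that $2td\mid(\beta\cdot k,\beta\cdot\ell)$ with $\beta^2=-2td$ already forces $(t,d)=1$, so the extra possibility is vacuous.

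The only delicate step — and the main potential pitfall — is this matching of groups: I must verify that the subgroup of $O(A_\Lambda)$ by which $G$ is the quotient is exactly $\{\pm\id,\pm s\}$, including the degenerate coincidence $s=\id$ at $m=2$, so that ``nontrivial in $G$'' is literally the negation of the condition characterized in Theorem~\ref{thm:HK_beta_reflection_numerical}. For well-posedness I would also recall that the conditions in b) are invariant under $\widehat O(\Lambda_{K3^{[m]}},h)$ (established in the discussion following Theorem~\ref{thm:HK_beta_reflection_numerical}); by Lemma~\ref{lemma:inv_div_equality} this guarantees that they depend only on the $\bar\Gamma$-orbit of $\beta$, hence only on the Heegner divisor $\Hh_{\beta^\perp}$ itself, so the list genuinely describes divisors rather than individual representatives.
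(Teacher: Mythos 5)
Your proposal is correct and follows the same route the paper intends: the corollary appears there without a separate proof precisely because it is the combination of Theorem~\ref{thm:divisori_ref}, the identification $G\simeq O(A_{h^\perp})/\{\pm\id,\pm s\}$ coming from Remark~\ref{rmk:HK_summary_normality_subgroup} and the surjectivity of $r$, and the negation of condition~b) of Theorem~\ref{thm:HK_beta_reflection_numerical}, together with the invariance discussion that precedes the corollary. Your two extra checks—that $s=\id$ on $A_{h^\perp}\simeq\Z/2d\Z\times\Z/2\Z$ when $m=2$, so the two descriptions of $\widehat O(\Lambda_{K3^{[m]}},h)$ agree, and that the clause $(t,d)=1$ may be dropped since it is forced by $\beta^2=-2td$ together with $2td\mid(\beta\cdot k,\beta\cdot\ell)$—are exactly the points the paper leaves implicit, and you resolve both correctly.
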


\section{Hyper-Kähler fourfolds}
We now restrict to the case $m=2$ of hyper-Kähler fourfolds of polarization type $\tau = O(\Lambda_{K3^{[2]}})h$, where $h$ is a primitive vector of square $2d$ and divisibility $\gamma$. Since $\gamma\mid 2(m-1)$, we obtain that $\gamma$ is either $1$ or $2$.
In this case the group $\widehat O(\Lambda_{K3^{[2]}})$ is a normal subgroup of $O(h^\perp)$, and defines the Galois cover
$$
\cover: \cD_{h^\perp}/\widehat O(\Lambda_{K3^{[2]}}, h^\perp) \map \cD_{h^\perp}/ O(h^\perp),
$$
with Galois group $G = O(A_{h^\perp})/\{\pm\id\}$.\\

Corollary~\ref{cor:div_irred_HK1} implies that, when $\gamma = 1$, the divisorial components of the ramification locus of $\cover$
are the Heegner divisors $\Hh_{\beta^\perp}$ such that $\beta$ is primitive and satisfies the conditions
\begin{samepage}
\begin{enumerate}[label=\rm{\alph*})]
    \item {$\beta^2\mid 2\div(\beta)$;}
    \item {$\beta^2\neq -2$ and if $\beta^2 = -2d$, then $2d\nmid \beta\cdot \ell$.}
\end{enumerate}
\end{samepage}

Observe that, from Equation~\eqref{eqn:beta^2divideslcm}, if $\beta$ defines a reflection, then $\beta^2\mid 4d$.\\

In \cite{Image_per_map_HK4}, Debarre and Macrì characterized the image of the period morphism of polarized hyper-Kähler fourfolds of square $2d$ and divisibility $\gamma$.  We would like to characterize those ramification divisors that meet its image: since in this case the period morphism is an embedding (see \cite[Proposition~3.2]{Song}), they induces a nonzero divisor on the moduli space.\\

For each primitive rank-2 sublattice $K$ of $\Lambda_{K3^{[2]}}$ of signature $(1,1)$ that contains the vector $h$, the authors denote by $\cD_{2d, K}^{(1)}$ the divisor of $\cD_{h^\perp}/\widetilde O(h^\perp)$ cut out by the codimension-2 subspace $\Proj(K^\perp_\C) \subset \Proj((\Lambda_{K3^{[2]}})_\C)$. Namely, if $K\cap h^\perp = \Z \beta$ for some primitive vector $\beta\in h^\perp$, the divisor $\cD_{2d, K}^{(1)}$ is the Heegner divisor $\Hh_{\beta^\perp}$.
Moreover, for each positive integer $D$, the authors set
$$
    \cD_{2d, D}^{(1)} \coloneqq \bigcup_{\mathrm{disc}(K^\perp) = D} \cD_{2d, K}^{(1)} \subset \cD_{h^\perp}/\widetilde O(h^\perp).
$$

The image of the period morphism
$$
^{[2]}\wp: {^{[2]}M_{2d}^{(1)}} \longinj \cD_{h^\perp}/\widetilde O(h^\perp)
$$
of polarized hyper-Kähler fourfolds of K$3^{[2]}$-type and polarization type defined by a vector $h$ of square $2d$ and divisibility $1$ is described in \cite[Theorem~6.1]{Image_per_map_HK4}. In particular, they show that the following holds.

\begin{proposition}[{\cite[Theorem~6.1]{Image_per_map_HK4}}]\label{prop:DM_image_per_morph}
The image of the period morphism $\wp_{K3^{[2]}}$ is the complement of certain irreducible Heegner divisors contained in the hypersurfaces $\cD_{2d, 2d}^{(1)}$, $\cD_{2d, 8d}^{(1)}$, $\cD_{2d, 10d}^{(1)}$ and $\cD_{2d, \frac{2d}{5}}^{(1)}$, where the last case occurs only for $d \equiv \pm 5 \pmod{25}$.
\end{proposition}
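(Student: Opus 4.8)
I sketch the strategy underlying \cite[Theorem~6.1]{Image_per_map_HK4}. The plan is to combine the global Torelli theorem for hyper-Kähler manifolds with an explicit description of the ample (Kähler) cone in terms of wall-defining classes. By Verbitsky's Torelli theorem together with the surjectivity of the period map, in the polarized form established by Markman, the period morphism ${}^{[2]}\wp$ is an open immersion; hence its image is an open subset of $\cD_{h^\perp}/\widetilde O(h^\perp)$ whose complement is a union of Heegner divisors. The task thus reduces to deciding, for a very general period $[x]$ lying on a fixed Heegner divisor $\Hh_{\beta^\perp}$, whether some marked hyper-Kähler fourfold with that period carries $h$ as an \emph{ample} class. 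The failure of ampleness is controlled entirely by the algebraic $(1,1)$-classes orthogonal to the period, together with the structure of the positive cone.

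The second ingredient is the classification of wall divisors for the K3$^{[2]}$-deformation type, due to Mongardi and Bayer--Hassett--Macrì: a primitive class $\lambda$ of the $(1,1)$-lattice bounds a wall of the Kähler cone exactly when $\lambda^2 = -2$, or $\lambda^2 = -10$ with $\div(\lambda)=2$. A period fails to carry $h$ as an ample class precisely when such a $\lambda$ lies in a rank-$2$ sublattice $K$ of signature $(1,1)$ containing $h$, positioned so as to separate $h$ from the ample cone. For each admissible configuration one records the discriminant $D=\disc(K^\perp)$, equivalently the Heegner divisor $\cD_{2d,K}^{(1)}=\Hh_{\beta^\perp}$ with $\Z\beta = K\cap h^\perp$. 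Grouping these by $D$ recovers the hypersurfaces $\cD_{2d,D}^{(1)}$ appearing in the statement.

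The final step is to enumerate, for $\div(h)=1$, the possible pairs $(\lambda, K)$ and to compute $D=\disc(K^\perp)$ by elementary arithmetic in the rank-$2$ lattice $K$: the $(-2)$-walls produce the families of discriminant $2d$ and $8d$, according to the divisibility of $\lambda$ in $\Lambda_{K3^{[2]}}$, while the $(-10)$-walls of divisibility $2$ produce discriminant $10d$ and, in the special arithmetic subcase governed by divisibility by $5$, the exceptional discriminant $\tfrac{2d}{5}$, which can occur only when $\tfrac{2d}{5}$ is an admissible discriminant, i.e.\ under the congruence $d\equiv\pm 5\pmod{25}$. I expect the main obstacle to be the \emph{completeness} of this list --- proving that these wall types are the only obstructions to the ampleness of $h$ --- which rests on the full Bayer--Macrì wall-crossing analysis and the determination of the ample and movable cones of the relevant moduli spaces of Bridgeland-stable objects on a K3 surface. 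Granting that input, the discriminant bookkeeping producing the four values $2d$, $8d$, $10d$, $\tfrac{2d}{5}$ is routine.
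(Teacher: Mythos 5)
There is nothing in the paper to compare your argument against: the paper does not prove this proposition at all, but imports it verbatim from Debarre--Macrì (the bracketed citation \cite[Theorem~6.1]{Image_per_map_HK4} is its entire justification), and then only \emph{uses} it, in Proposition~\ref{prop:inv_div_contained_image}, via the discriminant formula \eqref{eq:HK2_discriminant_betaperp}. So your proposal is a reconstruction of the proof in the cited reference, and as such it is essentially faithful: Debarre--Macrì do combine Markman's polarized form of Verbitsky's Torelli theorem (so that the complement of the image is a union of Heegner divisors) with the classification of wall/MBM classes for K3$^{[2]}$-type ($\lambda^2=-2$, or $\lambda^2=-10$ with $\div(\lambda)=2$, due to Mongardi and Bayer--Hassett--Macrì and ultimately resting on Bayer--Macrì wall-crossing), and the four discriminants arise from exactly the bookkeeping you describe. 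Indeed, in the paper's own notation, $\disc(K^\perp)=-\beta^2\disc(h^\perp)/\div(\beta)^2=-4d\beta^2/\div(\beta)^2$, so the $(-2)$-classes give $8d$ or $2d$ according to $\div_{h^\perp}(\beta)\in\{1,2\}$, and the $(-10)$-classes give $10d$ or $\tfrac{2d}{5}$ according to $\div_{h^\perp}(\beta)\in\{2,10\}$, the last case existing only under the stated congruence on $d$; note that the relevant divisibility here is the one computed in $h^\perp$, not in $\Lambda_{K3^{[2]}}$ as you wrote. One conceptual point needs correcting: the obstruction is not that a wall class $\lambda$ is ``positioned so as to separate $h$ from the ample cone,'' but that $\lambda$ is orthogonal to \emph{both} the period and $h$. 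By Markman's transitivity of the monodromy action on K\"ahler-type chambers, whenever $h$ lies in the interior of some chamber one can adjust the marking (equivalently, pass to a birational model) to make $h$ ample, so only the condition $\lambda\cdot h=0$ obstructs membership in the image; a ``separating'' criterion would produce spurious excluded divisors. Your discriminant list is consistent only with the orthogonality criterion, so the slip is in the phrasing rather than in the computation.
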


We now determine when a Heegner divisor $\Hh_{\beta^\perp}$ is contained in one of these hypersurfaces, for $\beta \in h^\perp$ primitive vector of negative square that defines a reflection.

\begin{proposition}\label{prop:inv_div_contained_image}
Let $\beta$ be a primitive vector that defines a reflection and such that
\begin{itemize}
    \item $\beta^2 \neq -2, \beta^2 \neq -8$;
    \item and if $d \equiv \pm 5 \pmod{25}$, $\beta^2\neq -10$ and $\beta^2 \neq -45$.
\end{itemize}
Then the Heegner divisor $\Hh_{\beta^\perp}$ induces a nonzero divisor in the moduli space $^{[2]}M_{2d}^{(1)}$.
\end{proposition}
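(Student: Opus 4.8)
The plan is to convert the geometric statement into a purely lattice-theoretic one about the discriminant of $\beta^\perp$. Since $\gamma=1$ and $t=m-1=1$, Remark~\ref{rmk:HK_summary_normality_subgroup} gives $\widehat O(\Lambda_{K3^{[2]}},h)=\widetilde O(h^\perp)$, so the source of the cover is exactly the period space $\cD_{h^\perp}/\widetilde O(h^\perp)$ used by Debarre--Macrì. Because the period morphism $^{[2]}\wp$ is an open embedding \cite[Proposition~3.2]{Song}, the Heegner divisor $\Hh_{\beta^\perp}$ induces a nonzero divisor on $^{[2]}M_{2d}^{(1)}$ if and only if it meets the image of $^{[2]}\wp$; as $\Hh_{\beta^\perp}$ is irreducible of codimension $1$ and the image is the complement of a union of irreducible Heegner divisors (Proposition~\ref{prop:DM_image_per_morph}), this fails precisely when $\Hh_{\beta^\perp}$ is itself one of the removed divisors. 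Since every removed divisor is contained in one of $\cD_{2d,2d}^{(1)}$, $\cD_{2d,8d}^{(1)}$, $\cD_{2d,10d}^{(1)}$, $\cD_{2d,\frac{2d}{5}}^{(1)}$, it suffices to prove that, under the stated hypotheses on $\beta^2$, the divisor $\Hh_{\beta^\perp}$ lies in none of these four hypersurfaces.

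The next step is to pin down when such containment occurs. By definition $\cD_{2d,D}^{(1)}$ is the union of the $\Hh_{\gamma^\perp}$ over primitive $\gamma\in h^\perp$ of negative square with $\disc(\gamma^\perp)=D$ (every such $\gamma$ arising from $K=\langle h,\gamma\rangle^{\mathrm{sat}}$, whose orthogonal complement in $\Lambda_{K3^{[2]}}$ equals $\gamma^\perp$ computed inside $h^\perp$). Since $\Hh_{\beta^\perp}$ is irreducible, Lemma~\ref{lemma:inv_div_equality} together with Eichler's Lemma~\ref{lemma:eichler} — applicable because $h^\perp=M\oplus\Z(-2d)\oplus\Z(-2)$ contains two hyperbolic planes — shows that $\Hh_{\beta^\perp}\subseteq\cD_{2d,D}^{(1)}$ if and only if $\disc(\beta^\perp)=D$. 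Thus the whole assertion reduces to the inequality $\disc(\beta^\perp)\notin\{2d,\,8d,\,10d,\,\tfrac{2d}{5}\}$.

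I would then compute $\disc(\beta^\perp)$ directly. Applying the discriminant relation \eqref{eqn:prel_disc_lattices} to the primitive vector $\beta\in h^\perp$, with $\disc(h^\perp)=4d$ and $|H|=|\beta^2|/\div(\beta)$, yields $\disc(\beta^\perp)=4d\,|\beta^2|/\div(\beta)^2$. The reflection hypothesis \ref{en:thm:relazioni:cond1}, namely $\beta^2\mid 2\div(\beta)$, combined with $\div(\beta)\mid\beta^2$, forces $\div(\beta)\in\{|\beta^2|,\,|\beta^2|/2\}$, so that $\disc(\beta^\perp)\in\{4d/|\beta^2|,\,16d/|\beta^2|\}$. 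It then remains, for each of the four excluded values of $D$, to solve $4d/|\beta^2|=D$ and $16d/|\beta^2|=D$ for $|\beta^2|$ and to read off the forbidden squares: the cases $D=2d$ and $D=8d$ return exactly $\beta^2\in\{-2,-8\}$, while $D=10d$ admits no integral solution, which is why that hypersurface imposes no condition on $\beta$.

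The hard part will be the divisor $\cD_{2d,\frac{2d}{5}}^{(1)}$. Integrality of $\disc(\beta^\perp)=\tfrac{2d}{5}$ forces $5\mid d$, consistent with the congruence $d\equiv\pm5\pmod{25}$ under which this hypersurface occurs, and the two equations single out the candidate negative squares attached to it. The genuinely delicate point is then a realizability check: one must verify which candidate squares are actually attained by a primitive $\beta\in h^\perp$ that defines a reflection, exploiting the flexibility of the unimodular summand $M$ to build (or to rule out) such vectors with the prescribed square and divisibility. This bookkeeping is what separates the truly forbidden square in condition \ref{en:thm:relazioni:cond2} from the spurious solutions of the discriminant equations, and I expect this number-theoretic analysis — rather than the reduction of the first three paragraphs — to be the only real obstacle.
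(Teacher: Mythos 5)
Your first three paragraphs reproduce the paper's argument almost exactly: the paper likewise reduces the statement to showing that $\disc(\langle h,\beta\rangle^\perp)=-4d\beta^2/\div(\beta)^2$ avoids $\{2d,\,8d,\,10d,\,\tfrac{2d}{5}\}$ (citing \cite[Lemma~7.2]{GHS13} for the formula you rederive from \eqref{eqn:prel_disc_lattices} with $|H|=|\beta^2|/\div(\beta)$ --- same content), splits into the same two cases $\div(\beta)=|\beta^2|$ and $\div(\beta)=|\beta^2|/2$, and solves the resulting equations; your explicit justification, via Lemma~\ref{lemma:inv_div_equality}, that an irreducible Heegner divisor lies in $\cD^{(1)}_{2d,D}$ only if $\disc(\beta^\perp)=D$ is left implicit in the paper but is correct and genuinely needed. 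Up to there the two proofs coincide.

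The gap is your final paragraph. First, no realizability check is needed for the implication being proved: one simply excludes \emph{every} solution of the two equations, attained or not, since excluding an unattainable square is a vacuous hypothesis. So the case $D=\tfrac{2d}{5}$ is no harder than the others: $4d/|\beta^2|=\tfrac{2d}{5}$ gives $\beta^2=-10$ and $16d/|\beta^2|=\tfrac{2d}{5}$ gives $\beta^2=-40$, and the proof ends there --- this is exactly what the paper does, and what you already did for $2d$, $8d$, $10d$. Second, the realizability analysis cannot accomplish what you hope, namely reconcile these candidates with the ``$\beta^2\neq-45$'' in the statement: the correct solution is $-40$, not $-45$ (note $-45$ is odd, so that hypothesis is vacuous in an even lattice), and reflective vectors of square $-40$ do exist. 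For instance, for $d=80\equiv 5\pmod{25}$, the vector $\beta=20m+3k+10\ell$ with $m\in M$ primitive of square $4$ is primitive in $h^\perp$, has $\beta^2=-40$ and $\div(\beta)=\gcd(20,480,20)=20$, hence defines a reflection, and $\disc(\beta^\perp)=\tfrac{2d}{5}=32$. So your route, completed by the same arithmetic as the other cases, proves the proposition with $-40$ in place of $-45$; it does not prove the statement as literally written. What your computation actually exposes is an arithmetic slip in the paper itself: in the second bullet of its proof, the equation $-2\cdot\tfrac{8d}{\beta^2}=\tfrac{2d}{5}$ is solved as $\beta^2=-45$ instead of $\beta^2=-40$, and this error propagates into the statement.
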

\begin{proof}
Observe that, if $K\cap h^\perp = \Z \beta$ for some vector $\beta$ of negative square, the lattices $K^\perp$ and $\langle h, \beta \rangle^\perp$ are equal. In particular, using \cite[Lemma~7.2]{GHS13}, we can compute

\begin{equation}\label{eq:HK2_discriminant_betaperp}
   \disc(K^\perp)= \disc(\langle h, \beta \rangle^\perp) = \frac{-\beta^2 \disc{(h^\perp)}}{\div(\beta)^2} = \frac{-4d\beta^2}{\div(\beta)^2},
\end{equation}
where we used that $\disc{(h^\perp)} = |A_{h^\perp}| = 2d\cdot 2$.\\
Therefore, the Heegner divisor $\Hh_{\beta^\perp}$ is contained in the locus $\cD_{2d, \frac{-4d\beta^2}{\div(\beta)^2}}^{(1)}$.\\

If $\beta$ is a primitive vector of negative square that defines a reflection, then $\beta^2\mid 2\div(\beta)$ and, since $\div(\beta)$ always divides $\beta^2$, the integer $\beta^2$ is equal to either $-\div(\beta)$ or $-2\div(\beta)$. Hence,
\begin{itemize}
    \item when $\beta^2 = -\div(\beta)$, formula \eqref{eq:HK2_discriminant_betaperp} yields
    $$
        \disc(\langle h, \beta \rangle^\perp) = -\frac{4d}{\beta^2},
    $$
    where $-\beta^2 = \div(\beta)\mid 2d$.
Hence the Heegner divisor $\Hh_{\beta^\perp}$ is contained in the locus $\cD_{2d, -2\frac{2d}{\beta^2}}^{(1)}$.
Proposition~\ref{prop:DM_image_per_morph} implies that, if
$$
    -2\frac{2d}{\beta^2}\not \in \bigg\{2d, 8d, 10d, \frac{2d}{5}\bigg\},
$$
where the last case only occurs for $d\equiv \pm \pmod{25}$, the Heegner divisor $\Hh_{\beta^\perp}$ meets the image of $^{[2]}\wp$.
Namely,
\begin{center} if $\beta^2\neq -2$ and, for $d\equiv \pm 5 \pmod{25}$, $\beta^2\neq -10$,
\end{center} the divisor $\Hh_{\beta^\perp}$ defines a nonzero divisor of the moduli space $^{[2]}M_{2d}^{(1)}$.

\item when $\beta^2 = -2\div(\beta)$, formula \eqref{eq:HK2_discriminant_betaperp} yields
    $$
        \disc(\langle h, \beta \rangle^\perp) = -\frac{16d}{\beta^2},
    $$
    where $-\beta^2 = 2\div(\beta)\mid 4d$.
    Hence the Heegner divisor $\Hh_{\beta^\perp}$ is contained in the locus $\cD_{2d, -2\frac{8d}{\beta^2}}^{(1)}$.
    Proposition~\ref{prop:DM_image_per_morph} implies that, if
    $$
        -2\frac{8d}{\beta^2}\not \in \bigg\{2d, 8d, 10d, \frac{2d}{5}\bigg\},
    $$
    where the last case only occurs for $d\equiv \pm 5\pmod{25}$, the Heegner divisor $\Hh_{\beta^\perp}$ meets the image of $^{[2]}\wp$.
    Namely,
    \begin{center}
    if $\beta^2\neq -2$, $\beta^2\neq -8$, and, for $d\equiv \pm 5\pmod{25}$, if $\beta^2\neq -45$,
    \end{center}the divisor $\Hh_{\beta^\perp}$ defines a nonzero divisor of the moduli space $^{[2]}M_{2d}^{(1)}$.
\end{itemize}
\end{proof}

\subsection{Hyper-Kähler fourfolds with polarization of square $2$}
We consider the polarization type defined by a vector $h$ of square $2$ ($d=1$). In this case, $\gamma$ is $1$ (see Remark~\ref{rmk:gamma_t_d}), the group of isometries of $A_{h^\perp}$ is
$$
O(A_{h^\perp}) = \bigg\langle\begin{pmatrix} 0 & 1 \\ 1 & 0\end{pmatrix}\bigg\rangle \simeq \Z/2\Z,
$$
and, since $-\id$ and $\id$ define the same isometry of $A_{h^\perp}$, we have $G \simeq Z/2\Z$.

If we write $h^\perp= M \oplus \Z k \oplus \Z \ell$, where $k^2 = -2$ and $\ell^2 = -2$, the vector $\gamma = k + \ell$ is a vector of square $-4$ that defines a nontrivial reflection in $G$, hence $G= [r_\gamma]$.

\begin{corollary}
Let $h\in \Lambda_{K3^{[2]}}$ be a primitive vector of square $2$. The ramification divisor of the double cover
$$
\cover:  {^{[2]}\Pp_{2}^{(1)}}=\cD_{h^\perp}/\widehat O(\Lambda_{K3^{[2]}}, h^\perp) \map \cD_{h^\perp}/ O(h^\perp)
$$
is irreducible and meets the image of the
period morphism
$$
    ^{[2]}\wp_{2}^{(1)}: {^{[2]}\Mm_{2}^{(1)}} \longinj {^{[2]}\Pp_{2}^{(1)}}.
$$
\end{corollary}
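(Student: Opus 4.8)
The plan is to combine the explicit list of ramification divisors furnished by Corollary~\ref{cor:div_irred_HK1} with Eichler's criterion to establish irreducibility, and then to deduce that this divisor meets the image of the period morphism directly from Proposition~\ref{prop:inv_div_contained_image}.

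First I would determine which vectors $\beta$ contribute ramification divisors. Here $d=1$ and $t=m-1=1$, so the specialisation of Corollary~\ref{cor:div_irred_HK1} to the present case says that the divisorial components of the ramification of $\cover$ are the $\Hh_{\beta^\perp}$ with $\beta\in h^\perp$ primitive satisfying $\beta^2\mid 2\div(\beta)$ and $\beta^2\neq -2$ (the subcondition on $\beta^2=-2d=-2$ being vacuous). Since $\beta$ defines a reflection, Equation~\eqref{eqn:beta^2divideslcm} gives $\beta^2\mid 4d=4$; as the lattice is even and $\beta^2\neq -2$, this forces $\beta^2=-4$. The discriminant group $A_{h^\perp}\cong(\Z/2\Z)^2$ has exponent $2$, so $\div(\beta)$, which is the order of $\beta_*$, lies in $\{1,2\}$, and $-4\mid 2\div(\beta)$ forces $\div(\beta)=2$.

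Next I would show that all such $\beta$ determine the same Heegner divisor. Computing the discriminant form on $\beta_*=[\beta/2]$ gives $q_{A_{h^\perp}}(\beta_*)\equiv \beta^2/\div(\beta)^2=-1\pmod{2\Z}$. By Proposition~\ref{prop:dec_disc_group}, in the basis $A_{h^\perp}=\langle k_*\rangle\times\langle\ell_*\rangle$ one has $q(k_*)\equiv q(\ell_*)\equiv -\tfrac12\pmod{2\Z}$ and, since $k_*\cdot\ell_*=0$, also $q(k_*+\ell_*)\equiv -1\pmod{2\Z}$; hence the unique class of square $-1$ is $k_*+\ell_*=\gamma_*$ with $\gamma=k+\ell$. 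Therefore every admissible $\beta$ satisfies $\beta^2=\gamma^2=-4$ and $\beta_*=\gamma_*$. Because $h^\perp=M\oplus\Z k\oplus\Z\ell$ contains $U^{\oplus 2}$, Eichler's Lemma~\ref{lemma:eichler} places $\beta$ and $\gamma$ in the same $\widetilde O(h^\perp)$-orbit, hence in the same $\bar\Gamma$-orbit, and Lemma~\ref{lemma:inv_div_equality} yields $\Hh_{\beta^\perp}=\Hh_{\gamma^\perp}$. Thus the ramification locus consists of the single irreducible Heegner divisor $\Hh_{\gamma^\perp}$, in agreement with $G\cong\Z/2\Z=\langle[r_\gamma]\rangle$.

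For the final assertion I would apply Proposition~\ref{prop:inv_div_contained_image} to $\gamma$: it is primitive and defines a reflection, and $\gamma^2=-4\notin\{-2,-8\}$, while $d=1\not\equiv\pm5\pmod{25}$, so the exceptional values $-10$ and $-45$ do not intervene. The proposition then guarantees that $\Hh_{\gamma^\perp}$ induces a nonzero divisor on ${}^{[2]}M_2^{(1)}$; as the period morphism is an open embedding, this is precisely the statement that the ramification divisor meets its image. I expect the main obstacle to be the uniqueness step underlying irreducibility, where one must exclude the discriminant classes $k_*$ and $\ell_*$; the discriminant-form computation above handles this cleanly by showing that a primitive vector of square $-4$ and divisibility $2$ can only have class $\gamma_*=k_*+\ell_*$.
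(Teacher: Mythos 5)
Your proof is correct, and its overall skeleton coincides with the paper's: reduce to $\beta^2=-4$ and $\div(\beta)=2$, pin down the discriminant class $\beta_*$, invoke Eichler's Lemma~\ref{lemma:eichler} (together with Lemma~\ref{lemma:inv_div_equality}) to conclude all admissible $\beta$ give the single Heegner divisor $\Hh_{\gamma^\perp}$ with $\gamma=k+\ell$, and finish with Proposition~\ref{prop:inv_div_contained_image}. The one step you handle differently is the identification $\beta_*=k_*+\ell_*$: the paper writes $\beta=am+bk+c\ell$ in coordinates and runs a parity argument ($a$ even and $(a,b,c)=1$ force $b,c$ both odd), whereas you compute $q(\beta_*)\equiv \beta^2/\div(\beta)^2\equiv -1\pmod{2\Z}$ and observe, using the form from Proposition~\ref{prop:dec_disc_group}, that $k_*+\ell_*$ is the unique class of $A_{h^\perp}\cong(\Z/2\Z)^2$ with that value. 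Your discriminant-form argument is more intrinsic and coordinate-free (and would adapt more readily to other small discriminant groups), while the paper's computation is elementary arithmetic; both rest on the same invariants, since Eichler's criterion only sees $\beta^2$ and $\beta_*$. A further cosmetic difference is that you enter through Corollary~\ref{cor:div_irred_HK1}, while the paper argues directly from Theorem~\ref{thm:divisori_ref} using that $[r_\beta]$ nontrivial forces $\beta^2\neq -2$; these are equivalent starting points here.
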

\begin{proof}
The components of the ramification divisor of $\cover$ are the irreducible divisors $D$ of $\cD_{h^\perp}/\widetilde O(h^\perp)$ contained in the fixed locus of some nontrivial element of $G$.\\
The only nontrivial element of $G$ is $[r_\gamma]$, where $\gamma = k + l$. Theorem~\ref{thm:divisori_ref} shows that if $D$ is an irreducible divisor contained in $\mathrm{Fix}([r_\gamma])$, there exists a vector $\beta$ that defines a nontrivial reflection in $G$ such that $D = \Hh_{\beta^\perp}$ and $[r_\gamma]=[r_\beta]$. Now, since $[r_\beta]$ is nontrivial, we have $\beta^2\neq -2$ and hence $\beta^2 = -4$ (see \eqref{eqn:beta^2divideslcm}). We show that all primitive vectors $\beta\in \Lambda$ of square $-4$ that define a reflection are conjugate by an element of $\widetilde O(\Lambda)$. In particular this implies $\Hh_{\beta^\perp} = \Hh_{\gamma^\perp}$ and that the ramification divisor of $\cover$ is therefore irreducible.

From Eichler's Lemma~\ref{lemma:eichler}, we know that the $\widetilde O(\Lambda)$-orbit of a vector $\beta$ is uniquely determined by $\beta^2$ and $\beta_*\in A_{h^\perp}$. Notice that, for each primitive vector $\beta$ of square $-4$ that defines a reflection, $\div(\beta)=2$. Indeed, Equation~\eqref{eqn:inv_divisibilityDividesLcm} implies $\div(\beta)\mid 2$, and $-4 =\beta^2\mid 2 \div(\beta)$ because $\beta$ defines a reflection.\\
We write $\beta = \cm\vm +\ck k + \cl l$, where $\cm, \ck, \cl$ are integers and $\vm\in M$ is a primitive vector. Recall that $A_{h^\perp} = \langle k_*  \rangle \times \langle l_* \rangle$ where $k_* = \left[\frac{k}2\right]$ and $l_* = \left[\frac{l}2\right]$. Therefore,
$$
    \beta_* = \left[\frac{\beta}{\div(\beta)}\right] = \bar{\ck} k_* + \bar{\cl} l_* \in A_{h^\perp}.
$$
We show that $\bar{\ck}=\bar{\cl}=1\in \Z/2\Z$, hence $\beta_* = k_* + l_*$. This is enough to finish the proof.

Since $\div(\beta) = (\cm, 2\ck, 2\cl) = 2$, we can write $\cm= 2\cm_1$ for some integer $a_1$. By computing the square of $\beta$
$$
     -4 = \beta^2 = (2\cm_1)^2 \vm^2 - 2\ck^2 - 2 \cl^2
$$
we obtain $2\mid \ck^2+\cl^2$, from which we obtain that $\ck$ and $\cl$ have the same parity. Since $\cm$ is even and $\cm$, $\ck$, $\cl$ are coprime, it follows that $\ck$ and $\cl$ are both odd.

Finally, Proposition~\ref{prop:inv_div_contained_image} shows that $\Hh_{\gamma^\perp}$ meets in the image of $^{[2]}\wp_{2}^{(1)}$.
\end{proof}

The moduli space $^{[2]}M_{2}^{(1)}$ contains a dense open subset $U_{2,1}$ which is the moduli space of double EPW sextics (see \cite[Example~3.5]{Deb-HK}). The involution $[r_\gamma] \in G$ defines an involution on $U_{2,1}$ which is the duality involution of double EPW sextics studied by O'Grady in \cite[Theorem~1.1]{OGrady08}. Observe that the associated ramification divisor $\Hh_{\gamma^\perp} = \cD_4$ does not meet the image of $U_{2,1}$ (see \cite{O'GPeriodsEPW}).\\

As observed in \cite[Section 3.9]{Deb-HK}, the quotient $\cD_{h^\perp}/O(h^\perp)$ is indeed the period space ${^{[3]}\Pp_{4}^{(2)}}$ of polarized hyper-Kähler manifolds of K$3^{[3]}$-type with polarization of square 4 and divisibility $2$. Indeed, given a primitive vector $h_2\in \Lambda_{K3^{[3]}}$ of square $4$ and divisibility $2$, the lattice $h_2^\perp$ is isomorphic to $h^\perp$, and direct computations show $\widehat O(\Lambda_{K3^{[3]}}, h_2) = O(h_2^\perp)$. \cite{KKM-EPW} shows that the cover $\cover$ associates to a double EPW sextic the corresponding double EPW cube.

 \bibliography{Bibliografia}
 \bibliographystyle{alpha}

\end{document}